\newtheorem{theorem}{Theorem}[section]
\newtheorem{lemma}[theorem]{Lemma}
\newtheorem{proposition}[theorem]{Proposition}
\newtheorem{definition}[theorem]{Definition}
\newtheorem{corollary}[theorem]{Corollary}
\title{Reducibility of varieties of commuting $m$-tuples of elements of simple Lie algebras}
\author{Nikola Kova\v{c}evi\'{c} \footnote{This paper is written as part of doctoral studies at Faculty of Mathematics and Physics, University of Ljubljana}\\
e-mail: nikola.kovacevic@imfm.si}
\affil{Institute of Mathematics, Physics and Mechanics,\\
  Jadranska ulica 19\\
  1000, Ljubljana\\
   Slovenia}
\affil{University of Ljubljana,\\
Faculty of Mathematics and Physics,\\
Jadranska ulica 19\\
1000, Ljubljana\\
Slovenia}
\begin{document}
\maketitle
\textbf{Abstract}\\
\\

In this paper we prove that the variety $C_m(L)$ of commuting $m$-tuples of elements of a simple Lie algebra $L$ is usually reducible. More precisely, one of the main results of this paper shows that if $m = 3$ and $L$ is of types $B,C,E_7,E_8,F_4,G_2$ or $D_l$ with $l \geq 10$, then $C_m(L)$ is reducible. Moreover, if $m \geq 4$ and $L$ is a simple Lie algebra different from $\mathfrak{sl}_2,\mathfrak{sl}_3$, then $C_m(L)$ is reducible. In order to obtain these results, we need a more general result, which is another main result of this paper. It states that, under some mild assumptions, $C_m(L)$ is reducible whenever $C_m(L')$ is reducible for some simple Lie subalgebra $L'$ of $L$ whose Dynkin diagram is a subdiagram of the Dynkin diagram of $L$.\\

\section{Introduction}

Let $m \in \mathbb{N}$ and let $L$ be a Lie algebra. We define:\\
\begin{equation*} C_m(L) = \{(x_1,...,x_m) \in L^m: [x_i,x_j] = 0\ (\forall 1 \leq i,j \leq m)\}.\\ \end{equation*}
This is an affine variety, called \textbf{$m$-th commuting variety of $L$}.\\

A research of commuting varieties is important for module varieties (see for example \cite{Z2},\cite{Ngo10},\cite{Sroer}), modular representation theory \cite{Ngo},\cite{Ngo2},\cite{Suslin1},\cite{Suslin2}, fibrations on surfaces \cite{Hitchin}, invariant theory (see, for example \cite{inv}, \cite{Vaccarino}), etc. Moreover, in the case of $L = \mathfrak{gl}_n$, the commuting variety is related to Hilbert and Quot schemes \cite{Baranovski},\cite{ADHM},\cite{Commuting},\cite{Jel},\cite{Nakajima}, and tensors \cite{JelLanPal},\cite{Landsberg}. Commuting varieties of parabolic subalgebras of $\mathfrak{gl}_n$ are connected to nested Hilbert schemes, see \cite{boosbulois} or \cite{nest}.\\

Throughout the paper we will work over algebraically closed field $F$ of characteristic zero. We can ask basic geometric questions about $C_m(L)$, for example for which $m$ and which Lie algebra $L$, $C_{m}(L)$ is reducible, and what are the irreducible components in the reducible cases. This question has mostly been studied in the case $m = 2$. By the result of Richardson \cite{Richardson}, $C_{2}(L)$ is irreducible for any reductive Lie algebra $L$. This is not necessarily true if $L$ is not reductive. See \cite{boosbulois}, \cite{nest}, \cite{goddardgodwin} for the cases where $L$ is parabolic. On the other hand, much less is known for $m \geq 3$. \\

In type $A$, the results about irreducibility of $C_m(L)$ are usually stated for $L = \mathfrak{gl}_l$ but they clearly hold also for $\mathfrak{sl}_l$. It is known that $m$-tuples of commuting $l \times l$ matrices form reducible variety for $m,l \geq 4$ \cite{Guralnick}. On the other hand, $C_m(\mathfrak{sl}_2)$, $C_m(\mathfrak{sl}_3)$ are irreducible for all $m$, see \cite{Guralnick}. See also \cite{Ngoo} for other geometrical properties of these varieties. The question of irreducibility of commuting varieties is open in type $A$ only in the case $m=3$. For triples of $l\times l$ matrices, we know that $C_{3}(\mathfrak{sl}_{l})$ is reducible for $l \geq 29$ \cite[Theorem 7.9.3]{Ohara} \cite{HolOml}, while it is irreducible for $l \leq 10$ \cite{Yongo,Sivic1, Sivic3}. The question is still open for $11 \leq l \leq 28$. Irreducible components of $C_m(\mathfrak{sl}_l)$ are classified in \cite{Jel} for $l \leq 7$ and all $m$.\\

Generalisation of the above results to other simple Lie algebras is a natural problem. Besides that, we believe that a research of commuting varieties in the general context of simple Lie algebras may give some answers to the open questions in type $A$ and corresponding Hilbert and Quot schemes. This paper is the first step in this direction, where we study irreducibility of $C_m(L)$. To the best of our knowledge, irreducibility of $C_m(L)$ for $L$ being a simple Lie algebra not of type $A$ has not been studied yet. The only results in this direction could be obtained by modifying results from \cite{Ngo2} to conclude that $C_{m}(L)$ is often reducible for classical $L$, but the so obtained results would not be sharp. In particular, the results in \cite{Ngo2} give no information about the case $m = 3$. The results in this paper will be sharper than those that could be obtained from \cite{Ngo2} because we use different techniques. One of our main tools in proving reducibility is the investigation of the tangent space to $C_m(L)$. This method was first applied to commuting varieties in [19], but it has never been used for $L$ different from $\mathfrak{gl}_l$.\\

In the paper we first consider the question whether reducibility of $C_m(L')$ for some simple Lie subalgebra $L'$ of $L$ whose Dynkin diagram is a subdiagram of the Dynkin diagram of $L$ implies reducibility of $C_m(L)$. It is natural to expect that this holds, but it is nontrivial to prove it. For example, if $L$ is of type $A$, this result is proved in \cite{HolOml} using functional calculus on matrices and in \cite{Jel} it is proved using obstruction theory. In the paper we prove this result under mild assumptions for all types using only elementary algebraic geometry. More precisely, we show that $C_m(L)$ is reducible if either $C_m(L')$ has a nonregular component (see Section 2.4 for the definition) of dimension at least $\dim(L')+(m-1)\mathrm{rank}(L')$ or if $C_m(L')$ has a component that is generically reduced as a scheme. However, we present an elementary proof which does not use scheme theory.\\

We call the above result Theorem on Adding Diagonals, as in type $A$ the obtained component of $C_m(L)$ is the closure of the $G$-orbit of the set of $m$-tuples of matrices that are direct sums of matrices from the component of $C_m(L')$ and diagonal matrices, as in \cite[Proposition 2.1]{HolOml}. For example, in Section 4.2 we show that the closure of the orbit of the triple\\
\[ \left( \left[\begin{array}{cccc}
0 & 1 & 0 & 0\\
0 & 0 & 0 & 0\\
0 & 0 & 0 & 0\\
0 & 0 & -1 & 0
\end{array}\right], \left[\begin{array}{cccc}
0 & 0 & 1 & 0\\
0 & 0 & 0 & 0\\
0 & 0 & 0 & 0\\
0 & 0 & 0 & 0
\end{array}\right], \left[\begin{array}{cccc}
0 & 0 & 0 & 0\\
0 & 0 & 0 & 0\\
0 & 0 & 0 & 0\\
0 & 1 & 0 & 0
\end{array}\right] \right)\]\\
is a component of $C_3(\mathfrak{sp}_4)$. Theorem on adding diagonals then shows that $C_3(L)$ is reducible for $L$ of type $F_4$ or $C_k, B_k$ for all $k \geq 2$.\\

The second main result, that we get by applying Theorem on Adding Diagonals, is the following.\\

\begin{theorem} Let $L$ be a simple Lie algebra. Then $C_m(L)$ is reducible in the following cases:\\
(1) $m \geq 4$ and $L$ not isomorphic to $\mathfrak{sl}_2$ or $\mathfrak{sl}_3$.\\
(2) $m = 3$, and $L$ of type $B_k,C_k,E_7,E_8,F_4,G_2$ or $D_l$ with $l \geq 10$.\end{theorem}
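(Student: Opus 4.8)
The plan is to deduce the whole theorem from a short list of base cases by repeated use of the Theorem on Adding Diagonals. Write $r=\operatorname{rank}(L)$. Recall that $C_m(L)$ always contains the irreducible \emph{regular} component
\[
\mathfrak C_0=\overline{\{(x_1,\dots,x_m)\in C_m(L):x_1\text{ is regular semisimple}\}},
\]
of dimension $\dim L+(m-1)r$, whose generic point is a smooth point of $C_m(L)$; hence $C_m(L)$ is reducible as soon as some closed irreducible subset of $C_m(L)$ fails to lie in $\mathfrak C_0$. The Theorem on Adding Diagonals provides exactly such a subset: if $L'\subseteq L$ is a simple subalgebra whose Dynkin diagram is a subdiagram of that of $L$, and $C_m(L')$ has a component that is either generically reduced or nonregular of dimension at least $\dim L'+(m-1)\operatorname{rank}(L')$, then $C_m(L)$ is reducible. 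So it suffices to produce, for a few minimal $L'$, a component of $C_m(L')$ meeting one of these conditions, and to observe that these $L'$ occur as subdiagram subalgebras of everything in the statement.

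For part (1), take $m\ge4$. By Guralnick, $C_m(\mathfrak{sl}_4)$ is reducible for $m\ge4$, with a component qualifying for Adding Diagonals (one such component consists of commuting nilpotent quadruples). Since $A_3$ is a subdiagram of the Dynkin diagram of $L$ whenever $L$ has type $A_\ell$ ($\ell\ge3$), $D_\ell$ ($\ell\ge3$), or $E_6,E_7,E_8$, this settles all of those types. The only simple types other than $A_1,A_2$ that do not contain $A_3$ as a subdiagram are $B_2\cong C_2$, $B_3$, $C_3$, $F_4$ and $G_2$, and all but $G_2$ contain $B_2=C_2$ as a subdiagram. Hence it is enough to prove directly that $C_m(\mathfrak{sp}_4)$ and $C_m(G_2)$ are reducible for $m\ge4$ with a qualifying component; Adding Diagonals then covers $B_k,C_k$ ($k\ge2$) and $F_4$. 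Each of these two base cases will be handled by the tangent-space method: exhibit an explicit commuting $m$-tuple $p$ (a nilpotent tuple with small common centralizer, arranged so that the linear system $[x_i,\xi_j]+[\xi_i,x_j]=0$ cutting out $T_pC_m(L)$ has a small solution space) with $\dim T_pC_m(L)<\dim L+(m-1)r$; then $p\notin\mathfrak C_0$, and if moreover $\dim T_pC_m(L)=\dim(G\cdot p)$, the orbit closure $\overline{G\cdot p}$ is a generically reduced component, so it also qualifies for Adding Diagonals when needed. (Alternatively one proves the base case for $m=4$ and bootstraps to $m\ge5$ by appending a generic element of the centralizer, checking the tangent space stays below the regular dimension.)

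For part (2), take $m=3$. Now $\mathfrak{sl}_4$ is unavailable, so the base cases are: (a) $C_3(\mathfrak{sp}_4)$ is reducible, witnessed by the explicit triple displayed in the introduction, whose $Sp_4$-orbit closure is a generically reduced component — this is the tangent-space computation of Section~4.2; since $B_2=C_2$ is a subdiagram of $B_k$, $C_k$ ($k\ge2$) and of $F_4$, Adding Diagonals yields those types. (b) $C_3(G_2)$ is reducible, by a direct tangent-space computation at an explicit commuting nilpotent triple in $G_2$. (c) $C_3(E_7)$ is reducible, again by an explicit tangent-space computation (the triple lives in a simply-laced algebra, so this case cannot be reached from $\mathfrak{sp}_4$); since $E_7$ is a subdiagram of $E_8$, Adding Diagonals gives $E_8$. (d) $C_3(\mathfrak{so}_{20})$, the type-$D_{10}$ base case, is reducible by the tangent-space method applied to a type-$D$ analogue of the commuting-nilpotent-triple constructions used in type $A$; since $D_{10}$ is a subdiagram of $D_\ell$ for every $\ell\ge10$, Adding Diagonals finishes all such $D_\ell$.

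The main obstacle, granting the Theorem on Adding Diagonals and Guralnick's theorem, is the handful of direct base cases, and specifically two things. First, finding for each of $\mathfrak{sp}_4$, $G_2$, $E_7$ and $\mathfrak{so}_{20}$ an explicit commuting triple (or $m$-tuple) at which one can provably bound $\dim T_pC_m(L)$ below $\dim L+(m-1)r$: for $E_7$ and $\mathfrak{so}_{20}$ this is a sizeable computation of the solution space of $[x_i,\xi_j]+[\xi_i,x_j]=0$, and in type $D$ it is precisely this dimension count that forces the threshold $\ell\ge10$ and leaves $D_\ell$ with $\ell\le9$ open. Second, checking in each base case that the component produced satisfies a hypothesis of the Theorem on Adding Diagonals — generic reducedness, which follows once $p$ is a smooth point, or otherwise nonregularity together with the dimension bound — so that the propagation up the Dynkin diagrams is legitimate.
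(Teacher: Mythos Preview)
Your overall architecture matches the paper exactly: a handful of base cases together with the Theorem on Adding Diagonals. For the small base cases $\mathfrak{sl}_4$, $\mathfrak{sp}_4$ and $G_2$ you also match the paper's method, namely the tangent-space ($T$-space) bound at an explicit nilpotent tuple, giving a generically reduced nonregular component and hence access to Corollary~3.5.

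Where you diverge is in the large base cases $E_7$ and $D_l$. You propose to run the tangent-space method there as well, but the paper does \emph{not}: it uses the other hypothesis of Adding Diagonals, producing a nonregular component whose dimension is at least $\dim L'+(m-1)\operatorname{rank}(L')$ (Corollary~3.3), via a \emph{lower} bound on $\dim C'$ rather than an \emph{upper} bound on $\dim T_p$. Concretely, for $E_7$ the paper takes a nilpotent $x$ in the orbit with label $(0,2,0,0,0,0,0)$, uses the $\mathbb{Z}$-grading from its $\mathfrak{sl}_2$-triple to see that $C(x)=(C(x)\cap L_0)\oplus(C(x)\cap L_2)\oplus(C(x)\cap L_4)$ with $\dim(C(x)\cap L_2)=28$, $\dim(C(x)\cap L_4)=7$, so commuting pairs in $(L_2\oplus L_4)\cap C(x)$ are cut out by at most $7$ equations; this yields $\dim C'\ge 147=\dim\mathrm{Reg}_{3,E_7}$, and Corollary~3.3 then gives $E_8$. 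For $D_l$ the paper works directly in $\mathfrak{so}_{4s}$: with $x_1$ a block nilpotent, the commutator of two elements of the chosen $2s^2$-dimensional abelian-up-to-one-block shape lands in a $\binom{s}{2}$-dimensional space, so $\dim C'\ge \tfrac{s(s+3)}{2}+\dim\mathfrak{so}_{4s}$, and comparing with $\dim\mathrm{Reg}_{3,\mathfrak{so}_{4s}}=4s+\dim\mathfrak{so}_{4s}$ gives exactly the threshold $s\ge 5$, i.e.\ $l\ge 10$. No $T$-space is computed in either case.

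This distinction matters. A tangent-space computation in $E_7$ means solving the linear system $[x_i,\xi_j]+[\xi_i,x_j]=0$ in a $133$-dimensional algebra, and in $\mathfrak{so}_{20}$ in a $190$-dimensional one; you give no indication of how you would carry this out or why a suitable point exists. The paper's equation-counting in a graded subspace sidesteps that entirely and is what actually produces the bound $l\ge 10$ you attribute to a tangent-space count. Your plan is not wrong in principle, but for these two cases you should switch to the dimension-lower-bound route (and correspondingly invoke Corollary~3.3 rather than~3.5 when propagating to $E_8$ and larger $D_l$).
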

 
The investigation of the remaining cases seems to be more technical, as in \cite{Yongo,Sivic1,Sivic3} for type $A$. We plan to investigate them in the next papers.\\

The paper is organized as follows: In section $2$ we present some auxiliary results which we need in the proof of theorem on adding diagonals. This result is proved in section $3$. In section $4$ we see how the theorem on adding diagonals helps us to deduce that $C_{m}(L)$ is reducible for all $m$ and $L$ as in Theorem 1.1.\\

\section{Preliminaries}

\subsection{Notation}

Let $F$ be an algebraically closed field of characteristic zero. Let $G$ be a connected simple algebraic group and let $L$ be its Lie algebra, denoted by $\mathfrak{L}\mathfrak{i}\mathfrak{e}(G)$. Let $H$ be a Cartan subalgebra of $L$ and let $\Phi$ be the root system associated to the pair $(L,H)$. Let $\Delta$ be a base of the root system $\Phi$. In our work we often use a Chevalley basis of $L$: $\{x_{\alpha},h_{\beta}: \alpha \in \Phi,\beta \in \Delta\}$, see \cite[Theorem 25.2]{Hump}. We denote with $R_{\alpha,\beta}$ the constant satisfying  $[x_{\alpha},x_{\beta}] = R_{\alpha,\beta}x_{\alpha+\beta}$, if $\alpha+\beta \in \Phi$. If $\alpha+\beta \notin \Phi$ and $\beta \neq -\alpha$, we denote $R_{\alpha,\beta} = 0$.\\
For an element $x \in L$, we denote the semisimple part of $x$ with $x_s$ and the nilpotent part by $x_n$.

Choose an irreducible subset $\Delta' \subset \Delta$, which is a base of the irreducible root system $\Phi' = \Phi \cap \mathrm{span}(\Delta')$, define\\
\begin{equation*} H' = \mathrm{span}(h_{\alpha};\alpha \in \Delta'),\\ \end{equation*}
and let\\
\begin{equation*} L' = H' + \sum_{\alpha \in \Phi'} L_{\alpha},\\ \end{equation*}
where $L_{\alpha}$ is the root space of $\alpha$. Then $L'$ is semisimple, see Corollary \ref{aa}, and since $\Phi'$ is irreducible, $L'$ is simple. 

If $G$ is an algebraic group and $K$ its algebraic subgroup, we denote the connected component of $K$ that contains identity with $K^{\circ}$. Algebraic group $G$ acts on its Lie algebra $L = \mathfrak{L}\mathfrak{i}\mathfrak{e}(G)$ by the adjoint action, and we denote that action with $\cdot$. This action can be extended to $C_m(L)$:\\ 
\begin{equation*} \cdot: G \times C_m(L) \to C_m(L),\\ \end{equation*}
\begin{equation*} (g,x_1,...,x_m) \mapsto (g \cdot x_1,...,g \cdot x_m).\\ \end{equation*}

Let $\mathcal{L}$ be a reductive Lie algebra, and let $\mathcal{G}$ be a connected algebraic group with $\mathfrak{Lie}(\mathcal{G}) = \mathcal{L}$. Let $x \in \mathcal{L}$, let $W \subset \mathcal{L}$ be an arbitrary subset, and let $L_1$ be a Lie subalgebra of $\mathcal{L}$. We denote the center of $L_1$ with $Z(L_1)$. We define:\\
\begin{equation*} N_{\mathcal{G}}(W) = \{g \in \mathcal{G}: g \cdot w \in W,\ (\forall w \in W)\},\\ \end{equation*}
\begin{equation*} \mathrm{Stab}_{\mathcal{G}}(x) = \{g \in \mathcal{G}: g \cdot x = x\},\\ \end{equation*}
\begin{equation*} \mathrm{Stab}_{\mathcal{G}}(W) = \cap_{x \in W} \mathrm{Stab}_{\mathcal{G}}(x),\\ \end{equation*}
\begin{equation*} N_{\mathcal{L}}(W) = \{x \in \mathcal{L}: [x,w] \in W\ (\forall w \in W)\},\\ \end{equation*}
\begin{equation*} C_{\mathcal{L}}(x) = \{y \in \mathcal{L}: [x,y] = 0\},\\ \end{equation*}
\begin{equation*} C_{\mathcal{L}}(W) = \cap_{x \in W} C_{\mathcal{L}}(x), \\ \end{equation*}
\begin{equation*} C_{L_1}(x) = C_{\mathcal{L}}(x) \cap L_1,\\ \end{equation*}
\begin{equation*} C_{L_1}(W) = C_{\mathcal{L}}(W) \cap L_1,\\ \end{equation*}
\begin{equation*} N_{L_1}(W) = N_{\mathcal{L}}(W) \cap L_1.\\ \end{equation*}

\subsection{Basic results about simple Lie algebras and algebraic groups}

For the theory of algebraic groups and Lie algebras, we refer to the standard textbooks, such as \cite{Hosild},\cite{Hump},\cite{TauvelYu}. \\

It is very useful to collect in a single proposition all results on the correspondence between algebraic groups and Lie algebras from the literature that we need. These results are presented in \cite[24.3.6]{TauvelYu}, \cite[23.6.3]{TauvelYu}, \cite[24.3.5]{TauvelYu} and \cite[Theorem VIII.3.4]{Hosild}.\\

\begin{proposition}\label{b} Let $\mathcal{L}$ be a reductive Lie algebra, let $\mathcal{G}$ be a connected algebraic group with $\mathfrak{Lie}(\mathcal{G}) = \mathcal{L}$ and let $W$ be a Lie subalgebra of $\mathcal{L}$. With notation introduced in Section 2.1 we have:\\
(1) $\mathfrak{Lie}(N_{\mathcal{G}}(W))  = N_{\mathcal{L}}(W)$\\
(2) $\mathfrak{Lie}(\mathrm{Stab}_{\mathcal{G}}(x)) = C_{\mathcal{L}}(x)$, for all $x \in \mathcal{L}$.\\
(3) $\mathfrak{Lie}(\mathrm{Stab}_{\mathcal{G}}(W)) = C_{\mathcal{L}}(W)$\\
(4) If $g \in \mathcal{G}, x \in \mathcal{L}$ are arbitrary, then $(g \cdot x)_s = g \cdot x_s$ and $(g \cdot x)_n = g \cdot x_n$.\\
(5) For algebraic subgroups $H,K$ of $\mathcal{G}$, we have: $\mathfrak{Lie}(H \cap K) = \mathfrak{Lie}(H) \cap \mathfrak{Lie}(K)$.\\
(6) Suppose that for connected algebraic subgroups $H,K$ of $\mathcal{G}$ we have $\mathfrak{Lie}(H) \subset \mathfrak{Lie}(K)$. Then $H \subset K$.\\
(7) Let $U$ and $V$ be irreducible algebraic subgroups of $\mathcal{G}$, and let $\overline{<U,V>}$ be the Zariski closure in $\mathcal{G}$ of the subgroup generated by $U$ and $V$. Then $\mathfrak{Lie}(\overline{<U,V>})$ coincides with the Lie subalgebra of $\mathcal{L}$ generated by $\mathfrak{Lie}(U)$ and $\mathfrak{Lie}(V)$.\end{proposition}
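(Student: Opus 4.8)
The statement is a compilation of standard facts about the Lie algebra functor for linear algebraic groups in characteristic zero, so the plan is to match each item with one of the references quoted just above and to supply the small amount of glue needed for the two items that are not verbatim citations.

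For parts (1), (2) and (4) there is nothing to do beyond citing the literature. The identity $\mathfrak{Lie}(N_{\mathcal{G}}(W)) = N_{\mathcal{L}}(W)$ is \cite[24.3.6]{TauvelYu}; here one uses that in characteristic zero the normalizer functor is smooth, so the set $N_{\mathcal{G}}(W)$ defined in Section 2.1 really is the algebraic group whose Lie algebra is computed there. The identity $\mathfrak{Lie}(\mathrm{Stab}_{\mathcal{G}}(x)) = C_{\mathcal{L}}(x)$ is \cite[23.6.3]{TauvelYu}. The compatibility $(g \cdot x)_s = g \cdot x_s$ and $(g \cdot x)_n = g \cdot x_n$ of the adjoint action with the Jordan decomposition is \cite[24.3.5]{TauvelYu}; the only observation needed is that, $\mathcal{L}$ being reductive and $\mathrm{char}\,F = 0$, the abstract Jordan decomposition of $x$ corresponds to that of $\mathrm{ad}(x)$, and $\mathrm{ad}(g\cdot x) = \mathrm{Ad}(g)\,\mathrm{ad}(x)\,\mathrm{Ad}(g)^{-1}$.

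For parts (5), (6) and (7) the plan is to invoke \cite[Theorem VIII.3.4]{Hosild} together with the fact that every algebraic group over a field of characteristic zero is smooth. This yields $\mathfrak{Lie}(H\cap K) = \mathfrak{Lie}(H)\cap\mathfrak{Lie}(K)$ for algebraic subgroups $H,K$, giving (5); it yields that a connected algebraic subgroup is recovered from its Lie algebra, so $\mathfrak{Lie}(H)\subset\mathfrak{Lie}(K)$ with $H$ connected forces $H\subset K$, giving (6); and it yields that $\mathfrak{Lie}$ of the Zariski closure $\overline{\langle U,V\rangle}$ equals the Lie subalgebra generated by $\mathfrak{Lie}(U)$ and $\mathfrak{Lie}(V)$, giving (7) once one notes that $\overline{\langle U,V\rangle}$ is a connected algebraic subgroup because $U$ and $V$ are connected.

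The only item requiring a genuine, if brief, argument is (3), since $\mathrm{Stab}_{\mathcal{G}}(W) = \bigcap_{x\in W}\mathrm{Stab}_{\mathcal{G}}(x)$ is a priori an infinite intersection, to which (5) does not directly apply. The plan is to choose a finite spanning set $x_1,\dots,x_k$ of $W$; because the adjoint action is linear, fixing $x_1,\dots,x_k$ is the same as fixing $W$ pointwise, hence $\mathrm{Stab}_{\mathcal{G}}(W) = \bigcap_{i=1}^{k}\mathrm{Stab}_{\mathcal{G}}(x_i)$. Applying (5) inductively and then (2) gives $\mathfrak{Lie}(\mathrm{Stab}_{\mathcal{G}}(W)) = \bigcap_{i=1}^{k} C_{\mathcal{L}}(x_i)$, which equals $C_{\mathcal{L}}(W)$ for the same linearity reason. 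I do not anticipate any real obstacle: the proposition is a bookkeeping lemma, and the only points deserving care are the smoothness remarks (used silently by the cited theorems) and the passage from infinite to finite intersections in (3), together with the connectedness check in (7).
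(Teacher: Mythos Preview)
Your proposal is correct and matches the paper's approach: the paper itself does not give a proof of this proposition, but simply states that the results are presented in \cite[24.3.6]{TauvelYu}, \cite[23.6.3]{TauvelYu}, \cite[24.3.5]{TauvelYu} and \cite[Theorem VIII.3.4]{Hosild}. Your write-up in fact goes slightly further than the paper by supplying the small glue arguments (the finite-spanning-set reduction for (3) and the connectedness remark for (7)), which is entirely appropriate.
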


In the rest of this subsection $L$ and $L'$ are as in Section 2.1.\\
 
\begin{corollary}\label{u} Let $x \in L$. Then\\
\begin{equation*} \dim(\mathrm{Stab}_G(x)) = \dim(C_L(x)).\\ \end{equation*}
\end{corollary}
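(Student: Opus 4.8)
The plan is to deduce this immediately from Proposition \ref{b}(2) together with the general fact that the dimension of an algebraic group equals the dimension of its Lie algebra. First I would note that $G$ is a connected simple (hence reductive) algebraic group with $\mathfrak{Lie}(G) = L$, so the hypotheses of Proposition \ref{b} are satisfied for $\mathcal{G} = G$ and $\mathcal{L} = L$. Applying part (2) of that proposition to the given $x \in L$ gives
\begin{equation*}
\mathfrak{Lie}(\mathrm{Stab}_G(x)) = C_L(x).
\end{equation*}

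Next I would invoke the standard result that for any algebraic group $K$ over a field (here $F$, algebraically closed of characteristic zero) one has $\dim K = \dim \mathfrak{Lie}(K)$, where on the left $\dim$ denotes the dimension of $K$ as an algebraic variety and on the right the dimension of $\mathfrak{Lie}(K)$ as an $F$-vector space; this is because $\mathfrak{Lie}(K)$ is the tangent space to $K$ at the identity and $K$ is smooth in characteristic zero. Combining this with the displayed equality yields
\begin{equation*}
\dim(\mathrm{Stab}_G(x)) = \dim \mathfrak{Lie}(\mathrm{Stab}_G(x)) = \dim C_L(x),
\end{equation*}
which is the assertion.

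There is essentially no obstacle here: the only slightly delicate point is making sure the smoothness/dimension fact is applied to a group that is genuinely an algebraic group, but $\mathrm{Stab}_G(x)$ is a closed subgroup of $G$ (being the stabilizer of a point under a morphic action), hence is itself an algebraic group, so the fact applies verbatim. One could alternatively cite this directly from the standard references \cite{Hump} or \cite{TauvelYu} on algebraic groups.
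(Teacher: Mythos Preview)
Your proof is correct and is exactly the argument the paper has in mind: the corollary is stated without proof because it follows immediately from Proposition~\ref{b}(2) and the standard fact that an algebraic group has the same dimension as its Lie algebra (a fact the paper itself invokes later, e.g.\ in the proof of Lemma~\ref{kolekcija}(5)).
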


\begin{lemma}\label{dodavanje} If $x = x_{1} + h$ where $h$ is the semisimple part of $x$ and $x_1$ is the nilpotent part of $x$, then\\
\begin{equation*} \dim(C_L(x_{1} + h)) = \dim(C_L(h) \bigcap C_L(x_{1})). \end{equation*}\\ \end{lemma}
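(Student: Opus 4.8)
The plan is to upgrade the claimed equality of dimensions to a genuine equality of centralizers,
\[
C_L(x_1 + h) \;=\; C_L(h) \cap C_L(x_1),
\]
and then simply read off the dimension statement. Write $x = x_s + x_n$ with $h = x_s$ and $x_1 = x_n$; in particular $[h, x_1] = 0$. The inclusion $\supseteq$ is immediate: any $y$ with $[y,h] = [y,x_1] = 0$ satisfies $[y, h + x_1] = 0$.

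For the reverse inclusion I would first establish the corresponding statement at the group level, namely $\mathrm{Stab}_G(x) = \mathrm{Stab}_G(h) \cap \mathrm{Stab}_G(x_1)$. If $g \in G$ fixes $x$, then by Proposition \ref{b}(4) we have $g \cdot h = g \cdot x_s = (g \cdot x)_s = x_s = h$, and likewise $g \cdot x_1 = (g\cdot x)_n = x_1$, so $g$ fixes both $h$ and $x_1$; conversely, if $g$ fixes $h$ and $x_1$ then it fixes their sum $x$. Now take Lie algebras of both sides: applying Proposition \ref{b}(5) on the right and Proposition \ref{b}(2) three times,
\[
C_L(x) = \mathfrak{Lie}(\mathrm{Stab}_G(x)) = \mathfrak{Lie}(\mathrm{Stab}_G(h)) \cap \mathfrak{Lie}(\mathrm{Stab}_G(x_1)) = C_L(h) \cap C_L(x_1),
\]
which is the desired identity; taking dimensions proves the lemma. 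If one wishes to stay closer to the dimension formulation, Corollary \ref{u} lets one reduce directly to the group computation above.

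There is essentially no hard step here; the single point that must be invoked with care is Proposition \ref{b}(4), i.e. that the adjoint action commutes with passing to semisimple and nilpotent parts — this is exactly what makes the set equality $\mathrm{Stab}_G(x) = \mathrm{Stab}_G(h) \cap \mathrm{Stab}_G(x_1)$ hold. If one prefers to avoid the group altogether, the same conclusion follows purely by linear algebra: using the standard fact that $\mathrm{ad}(x) = \mathrm{ad}(h) + \mathrm{ad}(x_1)$ is the additive Jordan decomposition of $\mathrm{ad}(x)$ in $\mathfrak{gl}(L)$ (with $\mathrm{ad}(h)$ semisimple, $\mathrm{ad}(x_1)$ nilpotent), one checks that for any endomorphism $T = T_s + T_n$ of a finite-dimensional vector space one has $\ker T = \ker T_s \cap \ker T_n$ — because $\ker T$ is invariant under $T_s$ and $T_n$ (they commute with $T$), and the zero operator $T|_{\ker T}$ has the unique Jordan decomposition $0 + 0$, forcing $T_s|_{\ker T} = T_n|_{\ker T} = 0$ — and then specializes this to $T = \mathrm{ad}(x)$ to obtain $C_L(x) = C_L(h) \cap C_L(x_1)$.
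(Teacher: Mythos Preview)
Your proof is correct and follows essentially the same approach as the paper: establish $\mathrm{Stab}_G(x)=\mathrm{Stab}_G(h)\cap\mathrm{Stab}_G(x_1)$ via Proposition~\ref{b}(4), then pass to Lie algebras using Proposition~\ref{b} to obtain $C_L(x)=C_L(h)\cap C_L(x_1)$. The additional purely linear-algebraic argument via the Jordan decomposition of $\mathrm{ad}(x)$ is a nice self-contained alternative not in the paper.
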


\begin{proof} Corollary \ref{u} implies that $\dim(C_L(x_1+h)) = \dim(\mathrm{Stab}_G(x_1+h))$, therefore we consider the stabilizer of $x_1+h$. If we have $g \cdot (x_1+h) = x_1+h$, for some $g \in G$, then the nilpotent part of the left side is $g \cdot x_1$ and of the right side is $x_{1}$, so  $g \in \mathrm{Stab}_G(x_1)$. Similarly, $g \in \mathrm{Stab}_G(h)$, so $g \in \mathrm{Stab}_G(h) \bigcap \mathrm{Stab}_G(x_1)$. We proved that $\mathrm{Stab}_G(h) \bigcap \mathrm{Stab}_G(x_{1}) \supset \mathrm{Stab}_G(x_{1} + h)$, while the other inclusion is obvious. Taking Lie algebras, due to Proposition \ref{b}, we have that $C_L(x_1+h) = C_L(x_1) \bigcap C_L(h)$, hence the dimensions are equal.\\ \end{proof}

\begin{definition}\label{c} We define:\\
\begin{equation*} H_1 = C_H(L').\\ \end{equation*}
\end{definition}

We examine basic properties of $H_1$ that we need in further work. In order to do that, we need the following well-known lemmas from linear algebra.\\

\begin{lemma}\label{g} Let $V$ be a vector space and let $U,V_1,...,V_n$ be vector subspaces of $V$ with $U \subset \cup_{i=1}^n V_i$. Then there exists $k \in \{1,2,...,n\}$ such that $U \subset V_k$.\end{lemma}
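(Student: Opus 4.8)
The plan is to reduce the statement to the classical fact that a vector space over an \emph{infinite} field is never a union of finitely many proper subspaces; the only place the hypotheses really enter is that $F$, having characteristic zero, is infinite (over a finite field the claim is false, e.g. $\mathbb{F}_2^{\,2}$ is the union of its three one-dimensional subspaces).

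First I would observe that it suffices to prove the following: \emph{if $W_1,\dots,W_n$ are proper subspaces of a vector space $U$ over $F$, then $U \neq \bigcup_{i=1}^n W_i$}. Granting this, apply it to $W_i := V_i \cap U$: if $U \subseteq \bigcup_{i=1}^n V_i$, then $U = \bigcup_{i=1}^n (V_i \cap U)$, so by the contrapositive some $V_i \cap U$ fails to be a proper subspace of $U$, i.e. $U \subseteq V_i$.

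To prove the reduced statement I would induct on $n$. The case $n=1$ is immediate. For the inductive step, suppose toward a contradiction that $U = \bigcup_{i=1}^n W_i$ with every $W_i$ proper and $n \geq 2$. If some member satisfies $W_i \subseteq \bigcup_{j \neq i} W_j$, then $U = \bigcup_{j \neq i} W_j$ is a union of $n-1$ proper subspaces, contradicting the inductive hypothesis; so we may assume $W_n \not\subseteq \bigcup_{i<n} W_i$ and pick $x \in W_n \setminus \bigcup_{i<n} W_i$. Since $W_n$ is proper, pick also $y \in U \setminus W_n$, and consider the line $\{\, y + tx : t \in F \,\} \subseteq U$. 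For $t \neq 0$ we have $y + tx \notin W_n$: otherwise, since $tx \in W_n$, we would get $y = (y+tx) - tx \in W_n$, a contradiction. Hence each point $y+tx$ with $t \neq 0$ lies in $\bigcup_{i<n} W_i$. As $F \setminus \{0\}$ is infinite while there are only $n-1$ subspaces $W_1,\dots,W_{n-1}$, the pigeonhole principle gives distinct $t_1,t_2 \in F\setminus\{0\}$ and an index $j<n$ with $y+t_1x,\, y+t_2x \in W_j$; subtracting, $(t_1-t_2)x \in W_j$, so $x \in W_j$, contradicting the choice of $x$.

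I do not expect a genuine obstacle here: the argument is elementary, and the only points needing attention are that it must exploit the infinitude of $F$ (via the pigeonhole step over infinitely many scalars $t$) and the small reduction to an irredundant cover that guarantees the vectors $x$ and $y$ with the required properties exist.
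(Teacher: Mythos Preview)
Your proof is correct: the reduction to intersecting with $U$ is clean, and the inductive pigeonhole argument on the affine line $y+tx$ is the standard way to show that a vector space over an infinite field cannot be a finite union of proper subspaces. You are also right to flag that the hypothesis $\operatorname{char}(F)=0$ (hence $F$ infinite) is what makes the pigeonhole step work.

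The paper itself does not prove this lemma; it simply records it as a well-known fact from linear algebra (alongside the companion Lemma on linear functionals) and uses it as a black box. So there is no approach to compare against---your argument supplies exactly the elementary justification the paper omits.
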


\begin{lemma}\label{h}\cite[Lemma 2.4.3]{Pedersen} For a set $\{f, f_1,...,f_n\}$ of linear functionals on a vector space $V$ the following are equivalent:\\
(1) $f = \sum_{k=1}^n \alpha_kf_k$, where $\{\alpha_1,...,\alpha_n\} \subset F$.\\
(2) $\cap_{i=1}^k \ker(f_k) \subset \ker(f)$.\\ \end{lemma}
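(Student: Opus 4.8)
The plan is to prove the two implications separately, the first being essentially immediate. For (1) $\Rightarrow$ (2): if $f = \sum_{k=1}^n \alpha_k f_k$ with $\alpha_k \in F$, and $v$ lies in $\bigcap_{i=1}^n \ker(f_i)$, then $f(v) = \sum_{k=1}^n \alpha_k f_k(v) = 0$, so $v \in \ker(f)$; hence $\bigcap_{i=1}^n \ker(f_i) \subseteq \ker(f)$. (Here I read condition (2) as $\bigcap_{i=1}^n \ker(f_i) \subseteq \ker(f)$.)

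For the converse (2) $\Rightarrow$ (1), I would package the functionals $f_1,\dots,f_n$ into a single linear map $\Phi \colon V \to F^n$, $\Phi(v) = (f_1(v),\dots,f_n(v))$. Then $\ker(\Phi) = \bigcap_{i=1}^n \ker(f_i)$, which by hypothesis is contained in $\ker(f)$. Consequently $f$ is constant on the fibres of $\Phi$, so it factors as $f = \bar f \circ \Phi$ for a unique linear functional $\bar f$ on the subspace $\mathrm{im}(\Phi) \subseteq F^n$. Extending $\bar f$ to a linear functional on all of $F^n$ — possible because $\mathrm{im}(\Phi)$ admits a complement in the finite-dimensional space $F^n$ — and using that every functional on $F^n$ has the form $(c_1,\dots,c_n) \mapsto \sum_{k=1}^n \alpha_k c_k$, we obtain scalars $\alpha_k \in F$ with $f(v) = \bar f(\Phi(v)) = \sum_{k=1}^n \alpha_k f_k(v)$ for all $v \in V$, i.e.\ $f = \sum_{k=1}^n \alpha_k f_k$.

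An alternative, purely elementary route is induction on $n$: for $n=1$, either $f_1 = 0$, whence $\ker(f_1) = V \subseteq \ker(f)$ forces $f = 0 = 0\cdot f_1$, or there is $v_0$ with $f_1(v_0) = 1$, and writing $v = (v - f_1(v)v_0) + f_1(v)v_0$ with $v - f_1(v)v_0 \in \ker(f_1) \subseteq \ker(f)$ gives $f = f(v_0)f_1$; for the inductive step one restricts all functionals to $\ker(f_n)$, applies the inductive hypothesis there, and corrects by a suitable multiple of $f_n$. Since the statement is classical (and cited from \cite{Pedersen}), I do not expect any genuine obstacle; the only points deserving a word of care are that (2) concerns the intersection of all $n$ kernels (the displayed index should read $\bigcap_{i=1}^n \ker(f_i)$) and that the extension step uses only finite-dimensionality of $F^n$, not of $V$.
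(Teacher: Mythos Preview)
Your proof is correct. Note, however, that the paper does not supply its own argument for this lemma: it is simply cited from \cite[Lemma 2.4.3]{Pedersen} and stated without proof, so there is nothing in the paper to compare your approach against. Both of your routes---the factorization through $\Phi\colon V\to F^n$ and the induction on $n$---are standard and valid, and your observation that the displayed intersection in condition (2) should read $\bigcap_{i=1}^n \ker(f_i)$ is correct.
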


Existence of the following special element in $H_1$ is crucial for the proof of Theorem on Adding Diagonals.\\

\begin{lemma}\label{f} There is $h \in H_1$ with $C_L(h) = H + L'$.\\ \end{lemma}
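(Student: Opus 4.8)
The plan is to find an element $h \in H_1 = C_H(L')$ whose centralizer in $L$ is exactly $H + L'$. Since $h$ commutes with every root vector $x_\alpha$ for $\alpha \in \Phi'$ and with $H'$, and also with $H$ (as $H$ is abelian), we automatically get $H + L' \subseteq C_L(h)$ for any $h \in H_1$; the whole content is to choose $h$ so that \emph{no extra} root vectors enter the centralizer. For $h \in H$, the centralizer decomposes as $C_L(h) = H + \sum_{\alpha \in \Phi,\ \alpha(h) = 0} L_\alpha$, so I need $h \in H_1$ with the property that $\alpha(h) = 0$ (for $\alpha \in \Phi$) forces $\alpha \in \Phi'$.

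First I would describe $H_1$ explicitly. By Proposition \ref{b}(3) or directly, $H_1 = C_H(L') = \{h \in H : \alpha(h) = 0 \text{ for all } \alpha \in \Phi'\} = \bigcap_{\alpha \in \Delta'} \ker(\alpha|_H)$. Now for each $\beta \in \Phi \setminus \Phi'$, consider the restriction $\beta|_{H_1}$. The key claim is that $\beta|_{H_1} \neq 0$ for every such $\beta$: if $\beta|_{H_1} = 0$, i.e. $\bigcap_{\alpha \in \Delta'} \ker(\alpha|_H) \subseteq \ker(\beta|_H)$, then by Lemma \ref{h} we would have $\beta|_H = \sum_{\alpha \in \Delta'} c_\alpha\, \alpha|_H$ for scalars $c_\alpha \in F$; since the $\alpha|_H$ for $\alpha \in \Delta$ form a basis of $H^*$ and $\Delta' \subseteq \Delta$, this expresses $\beta$ as a linear combination of elements of $\Delta'$, which forces $\beta \in \mathrm{span}(\Delta') \cap \Phi = \Phi'$, a contradiction. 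Hence each $\ker(\beta|_{H_1})$, for $\beta \in \Phi \setminus \Phi'$, is a proper subspace of $H_1$.

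Finally, since $\Phi \setminus \Phi'$ is a finite set, the union $\bigcup_{\beta \in \Phi \setminus \Phi'} \ker(\beta|_{H_1})$ is a proper subset of $H_1$ — here I invoke Lemma \ref{g} (a vector space over an infinite field, and $F$ is infinite being algebraically closed, is not a finite union of proper subspaces). So there exists $h \in H_1$ with $\beta(h) \neq 0$ for all $\beta \in \Phi \setminus \Phi'$. For this $h$ we get $C_L(h) = H + \sum_{\alpha \in \Phi,\ \alpha(h)=0} L_\alpha = H + \sum_{\alpha \in \Phi'} L_\alpha = H + L'$, as desired. I do not expect a serious obstacle here; the only point requiring a little care is the passage from "$\beta|_{H_1} = 0$" to "$\beta \in \Phi'$", which is exactly where Lemma \ref{h} and the fact that $\Delta' \subseteq \Delta$ is a subset of a basis are used, and the mild subtlety that Lemma \ref{g} needs the base field to be infinite.
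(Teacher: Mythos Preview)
Your proposal is correct and follows essentially the same route as the paper: both arguments identify $H_1$ with $\bigcap_{\alpha \in \Delta'}\ker(\alpha|_H)$ (the paper writes $\bigcap_{\alpha \in \Phi'}\ker\alpha$, which is the same), use Lemma~\ref{h} to show that any $\beta\in\Phi\setminus\Phi'$ with $\beta|_{H_1}=0$ would lie in $\mathrm{span}(\Delta')\cap\Phi=\Phi'$, and then use Lemma~\ref{g} to conclude the union of the proper hyperplanes $\ker(\beta|_{H_1})$ does not cover $H_1$. The only cosmetic difference is that you apply Lemma~\ref{h} before Lemma~\ref{g}, while the paper argues by contradiction and applies them in the opposite order.
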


\begin{proof} We take a Chevalley basis $\{x_{\alpha},h_{\beta}: \alpha \in \Phi,\beta \in \Delta\}$. Recall that for $h \in H$ and $x = \sum_{\alpha \in \Phi} a_{\alpha}x_{\alpha}+ \sum_{\beta \in \Delta} b_{\beta}h_{\beta}$ we have $[h,x] = \sum_{\alpha \in \Phi} a_{\alpha}\alpha(h)x_{\alpha}$, so $h$ commutes with $x$ if and only if it commutes with each $x_{\alpha}$ with $a_{\alpha} \neq 0$. We are searching for $h \in H$ such that it commutes with every $x_{\alpha}$ for $\alpha \in \Phi'$ and does not commute with any of the $x_{\beta}$ with $\beta \in \Phi \setminus \Phi'$.\\

Using $[h,x_{\alpha}] = \alpha(h)x_{\alpha}$ we see that $[h,x_{\alpha}] = 0$ if and only if $\alpha(h) = 0$, hence we are searching for\\
\begin{equation*} h \in \cap_{\alpha \in \Phi'} \ker(\alpha)\ and\ h \notin \cup_{\beta \in \Phi \setminus \Phi'} \ker(\beta).\\ \end{equation*}

Suppose that such $h$ does not exist, i.e. that\\
\begin{equation*} \cap_{\alpha \in \Phi'}\ker(\alpha) \subset \cup_{\beta \in \Phi \setminus \Phi'} \ker(\beta).\\ \end{equation*}
By Lemma \ref{g} there exists $\beta \in \Phi \setminus \Phi'$ with\\
\begin{equation*} \cap_{\alpha \in \Phi'}\ker(\alpha) \subset \ker(\beta).\\ \end{equation*}
As $\alpha,\beta$ are functionals, we apply Lemma \ref{h} and conclude that there exist $a_{\alpha} \in F$ with\\
\begin{equation*} \beta = \sum_{\alpha \in \Phi'} a_{\alpha} \alpha \in \mathrm{span}(\Phi') \cap \Phi = \Phi',\\ \end{equation*}
which is impossible since $\beta \in \Phi \setminus \Phi'$. This contradiction shows that our $h$ exists.\\ \end{proof}

We denote with $\mathfrak{I}$ the set of all $h \in H_1$ with $C_L(h) = L' + H$. It is nonempty due to Lemma \ref{f}.\\

Lemma \ref{f} together with known results on centralizers of semisimple elements implies that $L'$ is simple:\\

\begin{corollary}\label{aa} $L'$ is a simple Lie algebra.\\ \end{corollary}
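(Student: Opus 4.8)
The plan is to read off the structure of $L'$ from the centralizer of the special element $h\in\mathfrak{I}$ furnished by Lemma \ref{f}, using the standard fact that the centralizer of a semisimple element in a semisimple Lie algebra is reductive, with root system the set of roots vanishing on that element. Recall from the proof of Lemma \ref{f} that $h\in H_1$ can be chosen with $\alpha(h)=0$ precisely for $\alpha\in\Phi'$, so in the Chevalley basis $C_L(h)=H\oplus\bigoplus_{\alpha\in\Phi'}L_\alpha$; by Lemma \ref{f} this equals $H+L'$. Since $h\in H$ is semisimple, $C_L(h)$ is reductive, hence $C_L(h)=Z\bigl(C_L(h)\bigr)\oplus[C_L(h),C_L(h)]$ with $[C_L(h),C_L(h)]$ semisimple and, by the cited result, with root system $\Phi'$.

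Next I would identify $[C_L(h),C_L(h)]$ with $L'$. Computing with the bracket relations $[H,H]=0$, $[H,L_\alpha]=L_\alpha$, $[L_\alpha,L_{-\alpha}]=Fh_\alpha$ and $[L_\alpha,L_\beta]\subseteq L_{\alpha+\beta}$ gives $[C_L(h),C_L(h)]=\mathrm{span}(h_\alpha:\alpha\in\Phi')+\sum_{\alpha\in\Phi'}L_\alpha$. Since the coroots $h_\alpha$, $\alpha\in\Phi'$, span the same subspace of $H$ as the coroots $h_\beta$, $\beta\in\Delta'$ (the coroots of $\Phi'$ form a root system with base $\{\beta^\vee:\beta\in\Delta'\}$), that subspace is exactly $H'$, so $[C_L(h),C_L(h)]=H'+\sum_{\alpha\in\Phi'}L_\alpha=L'$. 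In particular $L'$ is semisimple, which also justifies the remark made right after the definition of $L'$.

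Finally I would upgrade "semisimple" to "simple": $L'$ is a semisimple Lie algebra whose root system is $\Phi'$, and $\Phi'$ is irreducible by the choice of $\Delta'$, so $L'$ is simple. (If one prefers to avoid quoting the root system of the centralizer, one checks directly that $H'$ is a Cartan subalgebra of $L'$ — it is toral and $C_{L'}(H')=H'$, since no $\alpha\in\Phi'$ vanishes on $H'$, such an $\alpha$ being orthogonal to $\mathrm{span}(\Delta')=\mathrm{span}(\Phi')$ and hence zero — and that restriction identifies the roots of $(L',H')$ with $\Phi'$.) The only step demanding any care is the linear-algebra fact that the coroots of $\Phi'$ and those of its base $\Delta'$ span the same subspace of $H$; the rest is routine structure theory, so I do not anticipate a genuine obstacle.
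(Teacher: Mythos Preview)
Your proof is correct and follows essentially the same route as the paper: pick $h\in\mathfrak{I}$, use that $C_L(h)$ is reductive, identify $L'$ with the derived algebra $[C_L(h),C_L(h)]$, and conclude simplicity from the irreducibility of $\Phi'$. The only difference is cosmetic: the paper cites \cite[Lemma 2.1.2]{Col} (and its proof) for the structure of $C_L(h)$ and its derived algebra, whereas you compute $[C_L(h),C_L(h)]$ by hand and spell out why $\mathrm{span}(h_\alpha:\alpha\in\Phi')=H'$.
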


\begin{proof} Let $h \in \mathfrak{I}$. Then, by \cite[Lemma 2.1.2]{Col} $C_L(h)$ is reductive and we know that $C_L(h) = H+L'$.
Note that $\alpha(h) = 0$ for $\alpha \in \Phi$ if and only if $x_{\alpha} \in C_L(h) = L'+H$, which holds if and only if $\alpha \in \Phi'$. Moreover, $H_1 = \cap_{\alpha \in \Phi'} \ker(\alpha)$, so the proof of \cite[Lemma 2.1.2]{Col} implies:\\
\begin{equation*} C_L(h) = H_1 \oplus H' \oplus \sum_{\alpha \in \Phi'} L_{\alpha} = H \oplus \sum_{\alpha \in \Phi'} L_{\alpha}.\\ \end{equation*}
In particular, $H_1 \oplus H' = H$.\\
Now, by the proof of \cite[Lemma 2.1.2]{Col}, we get that\\
\begin{equation*} [C_L(h),C_L(h)] =  H' + \sum_{\alpha \in \Phi'}L_{\alpha} = L'\\ \end{equation*}
is semisimple. Moreover, $\Phi'$ is an irreducible root system corresponding to $L'$, therefore $L'$ is simple.\\ \end{proof}

The above proof immediately implies the following:\\
 
\begin{lemma}\label{d} (1) $H_1 \cap L' = \{0\}$.\\
(2) $H = H_1 \oplus H'$\\
(3) $\dim(H_1) = |\Delta|-|\Delta'|$.\\ \end{lemma}

\begin{corollary}\label{i} The set $\mathfrak{I}$ is open in the Zariski topology on $H_1$.\\ \end{corollary}

\begin{proof} For any $h \in H_1$, by the definition of $H_1$, we have $L' \subset C_L(h)$. Also, as $H$ is Abelian, $H_1 \subset C_L(h)$. This means that $L' +H_1 \subset C_L(h)$ for all $h \in H_1$, therefore\\
\begin{equation*} \dim(C_L(h)) \geq \dim(L' + H_1) = \dim(L') + |\Delta|-|\Delta'|,\\ \end{equation*}
for all $h \in H_1$, with equality only if $C_L(h) = L' +H_1$, i.e. only if $h \in \mathfrak{I}$. For $h \notin \mathfrak{I}$ we have\\
\begin{equation*} \dim(C_L(h))> \dim(L') + |\Delta|-|\Delta'|,\\ \end{equation*}
which is a closed condition, i.e. the complement of $\mathfrak{I}$ is a closed set, making $\mathfrak{I}$ open in the Zariski topology on $H_1$.\\ \end{proof}

\begin{corollary}\label{k} Let $h_1,h_2 \in \mathfrak{I}$. If $g \in G$ is such that $g \cdot h_1 = h_2$, then $g \in N_G(L'+H_1)$.\\ \end{corollary}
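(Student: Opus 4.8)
The plan is to show that the hypothesis $g \cdot h_1 = h_2$ forces $g$ to normalize $C_L(h_1)$, and then to identify $C_L(h_1)$ with $L' + H_1$. First I would observe that, since $h_1 \in \mathfrak{I}$, by definition $C_L(h_1) = L' + H$. By Lemma \ref{d}(2) we have $H = H_1 \oplus H'$ and $H' \subset L'$, so in fact $C_L(h_1) = L' + H_1$; the same reasoning gives $C_L(h_2) = L' + H_1$ as well, since $h_2 \in \mathfrak{I}$. Thus it suffices to show that $g$ maps $L' + H_1$ into itself.

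The key step is the equivariance of centralizers under the adjoint action: for any $g \in G$ and $x \in L$ one has $g \cdot C_L(x) = C_L(g \cdot x)$, which follows immediately from the fact that $[g \cdot y, g \cdot x] = g \cdot [y,x]$ for the adjoint action. Applying this with $x = h_1$ and using $g \cdot h_1 = h_2$, I get
\begin{equation*}
g \cdot (L' + H_1) = g \cdot C_L(h_1) = C_L(g \cdot h_1) = C_L(h_2) = L' + H_1.
\end{equation*}
This says precisely that $g \cdot w \in L' + H_1$ for every $w \in L' + H_1$, i.e. $g \in N_G(L' + H_1)$, which is the desired conclusion.

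I do not expect any serious obstacle here; the statement is essentially a bookkeeping consequence of the $G$-equivariance of the map $x \mapsto C_L(x)$ together with the description of centralizers of elements of $\mathfrak{I}$ established in Corollary \ref{aa} and Lemma \ref{d}. The only mild point of care is to make sure that $L' + H_1$ (rather than $L' + H$) is the correct description of $C_L(h_i)$, which is exactly what Lemma \ref{d}(2) provides via $H = H_1 \oplus H' \subset H_1 + L'$. One could alternatively phrase the argument at the group level, using Proposition \ref{b}(2) to pass between $\mathrm{Stab}_G(h_i)$ and $C_L(h_i)$ and conjugation of stabilizers, but the direct Lie-algebra computation above is shorter and self-contained.
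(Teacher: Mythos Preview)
Your proof is correct and follows essentially the same approach as the paper's proof: both use that $g\cdot C_L(h_1)=C_L(g\cdot h_1)=C_L(h_2)$ via equivariance of the adjoint action, together with the identification $C_L(h_i)=L'+H_1$ for $h_i\in\mathfrak{I}$. The paper's proof simply writes $C_L(h_1)=L'+H_1$ directly (implicitly using $H=H_1\oplus H'\subset L'+H_1$), whereas you spell out this identification via Lemma~\ref{d}(2); otherwise the arguments are identical.
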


\begin{proof} Assume that $g \cdot h_1 = h_2$. If $z \in C_L(h_1) = L' + H_1$, then $g \cdot z \in C_L(h_2) = L'+H_1$, proving that $g \in N_G(L'+H_1)$.\\ \end{proof}

Corollary \label{k} introduced new important object: $N_G(L'+H_1)$. We investigate its properties.\\

\begin{lemma}\label{kolekcija} Let $x_1 \in L'$ be nilpotent and $h \in \mathfrak{I}$. Then the following holds.\\
(1) $Z(L'+H_1) = H_1$\\
(2) $N_G(L'+H_1) = N_G(L') \cap N_G(H_1)$\\
(3) $N_L(H_1) = L'+H_1$\\
(4) $N_L(L')\cap N_L(H_1) = L'+H_1$\\
(5) $\dim(N_G(L'+H_1)) = \dim(L')+|\Delta|-\Delta'|$.\\
(6) $\dim(C_L(x_1+h))-\dim(C_{L'}(x_1)) = |\Delta|-|\Delta'|$.\\
(7) $\dim(\mathrm{Stab}_G(x_1)\cap N_G(L'+H_1)) = \dim(C_{L'}(x_1))+|\Delta|-|\Delta'|$.\\
(8) $\mathfrak{Lie}(\mathrm{Stab}_G(L') \cap \mathrm{Stab}_G(H_1)) = H_1$.\\
\end{lemma}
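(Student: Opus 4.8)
The plan is to prove the eight items of Lemma~\ref{kolekcija} in an order that lets each part feed the next, relying throughout on Proposition~\ref{b} to pass between algebraic groups and Lie algebras, and on the decomposition $H = H_1 \oplus H'$ and $C_L(h) = L'+H$ from Corollary~\ref{aa} and Lemma~\ref{d}. I would start with (1): since $H_1$ is central in $H$ and $[H_1,L']=0$ by the very definition $H_1 = C_H(L')$, we have $H_1 \subseteq Z(L'+H_1)$; conversely, if $z = z' + z_1$ with $z' \in L'$, $z_1 \in H_1$ is central in $L'+H_1$, then $z'$ is central in $L'$, and since $L'$ is simple (Corollary~\ref{aa}) its center is zero, so $z' = 0$ and $z \in H_1$. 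For (2), the inclusion $\supseteq$ is clear; for $\subseteq$, if $g \in N_G(L'+H_1)$ then $g$ permutes the center $Z(L'+H_1)=H_1$ by (1), so $g \in N_G(H_1)$, and then $g$ also normalizes $[L'+H_1,L'+H_1]$. I would compute this commutator: $[L'+H_1,L'+H_1] = [L',L'] = L'$ because $L'$ is simple hence perfect and $H_1$ is central, giving $g \in N_G(L')$.

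Next, (3): $N_L(H_1)$ contains $H$ (abelian) and contains each $L_\alpha$ with $\alpha|_{H_1}=0$; by the description in Corollary~\ref{aa}, $\alpha|_{H_1}=0$ forces $\alpha \in \Phi'$ (since $h \in \mathfrak{I}$ has $H_1 \subseteq \ker\alpha$ for $\alpha\in\Phi'$ and, for $\alpha\notin\Phi'$, $\alpha(h)\neq 0$ with $h\in H_1$), so these root spaces lie in $L'$; for $\alpha \notin \Phi'$, $[x_\alpha, H_1] = -\alpha(H_1)x_\alpha \not\subseteq H_1$ unless the $x_\alpha$-component is killed, so a general element of $N_L(H_1)$ has no such root-space components, yielding $N_L(H_1) = H + \sum_{\alpha\in\Phi'} L_\alpha = H_1 + L'$. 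Item (4) is then immediate: $N_L(L')\cap N_L(H_1) \subseteq N_L(H_1) = L'+H_1$, and the reverse inclusion holds because $L'+H_1$ normalizes both $L'$ (it is $L' \oplus$ center, and $H_1$ acts on $L'$ trivially, $L'$ acts on itself) and $H_1$ (by (3)). For (5): by Proposition~\ref{b}(1) and (2), $\mathfrak{Lie}(N_G(L'+H_1)) = N_L(L'+H_1)$; I would argue $N_L(L'+H_1) = N_L(L') \cap N_L(H_1)$ (a subalgebra normalizing $L'+H_1$ normalizes its center $H_1$ by (1) and its derived algebra $L'$), which by (4) equals $L'+H_1$, whose dimension is $\dim(L') + |\Delta|-|\Delta'|$ by Lemma~\ref{d}(3). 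The claim follows since $\dim N_G(L'+H_1) = \dim \mathfrak{Lie}(N_G(L'+H_1))$.

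Item (6) follows from Lemma~\ref{dodavanje} and a direct centralizer computation: $C_L(x_1+h) = C_L(x_1) \cap C_L(h) = C_L(x_1) \cap (L'+H) = C_{L'}(x_1) \oplus H_1$ — here I use that $x_1 \in L'$, that $H_1$ centralizes all of $L'$ hence $x_1$, and that the $H'$-part contributes nothing beyond $C_{L'}(x_1)$ since $H' \subseteq L'$; taking dimensions gives the difference $\dim H_1 = |\Delta|-|\Delta'|$ by Lemma~\ref{d}(3). For (7), note $\mathrm{Stab}_G(x_1) \cap N_G(L'+H_1)$ has Lie algebra $C_L(x_1) \cap N_L(L'+H_1) = C_L(x_1) \cap (L'+H_1) = C_{L'}(x_1) \oplus H_1$ (same reasoning as (6), using that $H_1$ centralizes $x_1$); its dimension is $\dim C_{L'}(x_1) + |\Delta|-|\Delta'|$, and $\dim$ of the group equals $\dim$ of its Lie algebra. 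Finally (8): $\mathrm{Stab}_G(L') \cap \mathrm{Stab}_G(H_1)$ has Lie algebra $C_L(L') \cap C_L(H_1)$ by Proposition~\ref{b}(3),(5); now $C_L(L') \subseteq C_L(H') $ and in fact $C_L(L') = Z(L') \oplus C_H(L')\cap(\text{stuff})$ — more cleanly, $C_L(L') \cap H = H_1$ and any root-space component of an element centralizing all of $L'$ would have to centralize $H' \subseteq L'$, forcing $\alpha|_{H'}=0$, i.e. $\alpha=0$; so $C_L(L') \subseteq H$, whence $C_L(L') = C_H(L') = H_1$, and intersecting with $C_L(H_1) \supseteq H_1$ leaves $H_1$.

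The main obstacle I expect is the bookkeeping in (3), (6) and (8): each requires arguing that an element with a nonzero $x_\alpha$-component for some $\alpha \in \Phi\setminus\Phi'$ cannot lie in the relevant centralizer or normalizer, which rests on the precise fact (from the proof of Corollary~\ref{aa}) that $\alpha \in \Phi'$ is equivalent to $\alpha$ vanishing on $H_1$. Getting the commutator identity $[L'+H_1,L'+H_1]=L'$ and the centralizer identity $C_L(L')=H_1$ pinned down carefully — rather than the dimension counts themselves, which are routine once the sets are identified — is where the real content lies.
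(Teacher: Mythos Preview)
Your proofs of (1)--(7) are correct and essentially match the paper's, with one pleasant variation: in (2) you show $g \in N_G(L')$ by observing that $g$ preserves the derived subalgebra $[L'+H_1,L'+H_1]=[L',L']=L'$, whereas the paper argues separately that each $g\cdot x_\alpha$ remains nilpotent (hence lies in $L'$, the only place in $L'+H_1$ where nilpotents live) and then deduces $g\cdot h_\beta \in L'$ from $h_\beta=[x_\beta,x_{-\beta}]$. Your derived-subalgebra route is shorter and avoids Proposition~\ref{b}(4).

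There is, however, a genuine error in your argument for (8). You claim that a root-space component $x_\alpha$ of an element of $C_L(L')$ must satisfy $\alpha|_{H'}=0$, and that this forces $\alpha=0$, so that $C_L(L')\subseteq H$ and hence $C_L(L')=H_1$. The implication ``$\alpha|_{H'}=0 \Rightarrow \alpha=0$'' is false whenever $\Delta'\subsetneq\Delta$: for instance, in $\mathfrak{sl}_4$ with $\Delta'=\{\alpha_1\}$, the root $\alpha_3$ vanishes on $H'=Fh_{\alpha_1}$, and in fact $x_{\alpha_3}$ commutes with all of $L'\cong\mathfrak{sl}_2$, so $C_L(L')\supsetneq H_1$. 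You have confused the correct equivalence ``$\alpha|_{H_1}=0\Leftrightarrow\alpha\in\Phi'$'' (which you invoked correctly in (3)) with the false claim ``$\alpha|_{H'}=0\Leftrightarrow\alpha=0$''. The fix, which is what the paper does, is to use the intersection with $C_L(H_1)$ \emph{first}: since $C_L(H_1)=L'\oplus H_1$, one has $C_L(L')\cap C_L(H_1)=C_L(L')\cap(L'\oplus H_1)$; writing any element of this as $y'+h_1$ with $y'\in L'$, $h_1\in H_1$, and using that $h_1$ already centralizes $L'$, one gets $y'\in Z(L')=0$, so the intersection is exactly $H_1$.
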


\begin{proof} (1) is obvious.\\
(2) The inclusion $N_G(L')\cap N_G(H_1) \subset N_G(L'+H_1)$ is obvious.\\

Let $g \in N_G(L'+H_1)$ be arbitrary. The action of $g$ preserves $Z(L'+H_1)$, so $g \in N_G(H_1)$ by (1). Let $\alpha \in \Phi'$ be arbitrary. Then $x_{\alpha} \in L'$ is nilpotent and $g \cdot x_{\alpha}$ is nilpotent by Proposition \ref{b}, hence $g \cdot x_{\alpha} \in L'$ for all $\alpha \in \Phi'$. Take $\beta \in \Delta'$, then\\
\begin{equation*} g \cdot h_{\beta} = g \cdot [x_{\beta},x_{-\beta}] = [g \cdot x_{\beta},g \cdot x_{-\beta}] \in L'.\end{equation*}
We proved that $g \cdot x_{\alpha} \in L'$ for all $\alpha \in \Phi$ and $g \cdot h_{\alpha} \in L'$ for all $\alpha \in \Delta'$, so $g \in N_G(L')$, and $N_G(L'+H_1) \subset N_G(L')\cap N_G(H_1)$.\\
(3) We know that $C_L(H_1) = L'+H_1 \subset N_L(H_1)$.\\

To prove the other inclusion, let $h \in \mathfrak{I}$ and $x \in N_L(H_1)$. Write\\
\begin{equation*} x = \sum_{\alpha \in \Delta} a_{\alpha}h_{\alpha}+\sum_{\beta \in \Phi}b_{\beta}x_{\beta}\end{equation*}
for some $a_{\alpha},b_{\beta} \in F$ for $\alpha \in \Delta,\beta \in \Phi$. As $x \in N_L(H_1)$, and $h \in H_1$, the commutator $[h,x]$ belongs to $H_1$, therefore\\
\begin{equation*} [h,x] = \sum_{\beta \in \Phi}b_{\beta}\beta(h)x_{\beta}\end{equation*}
must be $0$, so $x \in C_L(h) = L'+H_1$.\\
(4) Parts (2) and (3) imply that\\
\begin{equation*} L'+H_1 \subset N_L(L'+H_1) = N_L(L')\cap N_L(H_1) \subset N_L(H_1) = L'+H_1,\end{equation*}
hence all containments are equalities.\\
(5) By (2),(4) and Proposition \ref{b}, we have:\\
\begin{equation*} \mathfrak{L}\mathfrak{i}\mathfrak{e}(N_G(L'+H_1)) = L'+H_1\end{equation*}
and as algebraic group and its Lie algebra have the same dimension, (5) follows by Lemma \ref{d}, using the fact that the sum $L' +H_1$ is direct.\\
(6) We show that:\\
\begin{equation*}C_L(x_1) \cap (L'+H_1) = C_{L'}(x_1)+C_{H_1}(x_1)= C_{L'}(x_1)+H_1.\end{equation*}
The last equality holds by the fact that $x_1 \in L'$. The inclusion $C_{L'}(x_1)+H_1\subset C_L(x_1) \cap (L'+H_1)$ is clear. For the other inclusion, let $y \in C_L(x_1)$. If $y \in L'+H_1$, we can write $y = l+h'$, where $l \in L',h' \in H_1$, and we only need to show that $l \in C_{L'}(x_1)$. This is clear, as $[y,x_1]=0$ and $[h',x_1]=0$ by the definition of $H_1$. Now, using Lemma \ref{dodavanje}, Lemma \ref{d}(3), the assumption $C_L(h) = L'+H_1$, and the fact that the sum $C_{L'}(x_1)+H_1$ is direct, we can compute:\\
\begin{equation*} \dim(C_L(x_1+h))-\dim(C_{L'}(x_1))= \end{equation*}
\begin{equation*} = \dim(C_L(x_1) \cap (L'+H_1))-\dim(C_{L'}(x_1))= \dim(H_1) = |\Delta|-|\Delta'|.\end{equation*}
(7) Using (2),(4) and Proposition \ref{b}, we have:\\
\begin{equation*} \mathfrak{L}\mathfrak{i}\mathfrak{e}(\mathrm{Stab}_G(x_1)\cap N_G(L'+H_1)) = C_L(x_1) \cap N_L(L') \cap N_L(H_1) = C_L(x_1) \cap (L'+H_1).\end{equation*}
(7) now follows from Lemma \ref{d} and the equality\\
\begin{equation*} C_L(x_1) \cap (L'+H_1) = C_{L'}(x_1)+H_1\end{equation*}
which was proven in the proof of (6).\\
(8) By Proposition \ref{b} we have\\
\begin{equation*} \mathfrak{Lie}(\mathrm{Stab}_G(L') \cap \mathrm{Stab}_G(H_1)) =  \mathfrak{Lie}(\mathrm{Stab}_G(L')) \cap \mathfrak{Lie}(\mathrm{Stab}_G(H_1)) = C_L(L') \cap C_L(H_1).\end{equation*}
We know that $C_L(H_1) = L' \oplus H_1$ so\\
\begin{equation*} \mathfrak{Lie}(\mathrm{Stab}_G(L') \cap \mathrm{Stab}_G(H_1)) = C_L(L') \cap (L' \oplus H_1).\end{equation*}
As $H_1 \subset C_L(L')$ and $H_1 \subset L'+H_1$, we know that $H_1 \subset C_L(L') \cap (L' \oplus H_1)$. To prove the other inclusion, let $y \in C_L(L') \cap (L' \oplus H_1)$ be arbitrary. Then there are $y' \in L',h_1 \in H_1$ with $y = y'+h_1$. Then, as $y \in C_L(L')$, for all $x \in L'$ we have $[y,x]=0$. Moreover, $[h_1,x]=0$ by the definition of $H_1$, and so $[y',x]=0$ for all $x \in L'$, which means that $y' \in Z(L')$. As $L'$ is a simple Lie algebra, we get $y'= 0$ and so $y = h_1 \in H_1$.\end{proof}

We conclude this subsection with a lemma that follows immediately from the definition of Borel subalgebras. We will use it in the proof of Theorem \ref{maintheorem2}.\\

\begin{lemma}\label{Borel} Let $x_1,x_2,..., x_{m} \in L$ mutually commute. Then there exists a Borel subalgebra of $L$ that contains them.\\
\end{lemma}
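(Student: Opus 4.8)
The plan is to realize the required Borel subalgebra as a maximal solvable subalgebra containing the span of the tuple, so that the statement does indeed reduce to the definition of a Borel subalgebra.

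First I would note that since $x_1,\dots,x_m$ pairwise commute, the linear span $\mathfrak{a} := \mathrm{span}_F(x_1,\dots,x_m)$ is a Lie subalgebra of $L$: for $x=\sum a_ix_i$ and $y=\sum b_jx_j$ one has $[x,y]=\sum_{i,j}a_ib_j[x_i,x_j]=0\in\mathfrak{a}$. In particular $\mathfrak{a}$ is abelian, hence solvable.

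Next I would extend $\mathfrak{a}$ to a maximal solvable subalgebra. Consider the set $\mathcal{S}$ of all solvable subalgebras of $L$ containing $\mathfrak{a}$, partially ordered by inclusion. It is nonempty, since $\mathfrak{a}\in\mathcal{S}$, and since every member of $\mathcal{S}$ has dimension at most $\dim_F L<\infty$, we may simply choose $\mathfrak{b}\in\mathcal{S}$ of maximal dimension (or invoke Zorn's Lemma, noting that any chain of subalgebras of $L$ is finite and so has a largest element, still solvable and still containing $\mathfrak{a}$). Such a $\mathfrak{b}$ is a maximal solvable subalgebra of $L$: a solvable subalgebra strictly containing $\mathfrak{b}$ would still contain $\mathfrak{a}$ and thus lie in $\mathcal{S}$, contradicting the choice of $\mathfrak{b}$. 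By the definition of a Borel subalgebra (a maximal solvable subalgebra), $\mathfrak{b}$ is a Borel subalgebra of $L$, and $x_1,\dots,x_m\in\mathfrak{a}\subseteq\mathfrak{b}$, as required.

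The only point that could require care is matching the two customary definitions of a Borel subalgebra: if one instead defines a Borel subalgebra as the Lie algebra of a Borel subgroup of $G$, one must quote the standard fact (available in \cite{TauvelYu}) that over an algebraically closed field of characteristic zero the Borel subalgebras of a reductive Lie algebra are precisely its maximal solvable subalgebras; with the maximal-solvable definition there is nothing more to prove. Thus no real obstacle arises, which is why the lemma follows immediately from the definition of Borel subalgebras.
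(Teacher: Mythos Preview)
Your argument is correct and is exactly what the paper has in mind: the paper does not give a proof but simply remarks that the lemma ``follows immediately from the definition of Borel subalgebras,'' and your write-up spells out precisely this---the span of the $x_i$ is abelian, hence sits in some maximal solvable subalgebra. Your parenthetical remark about reconciling the two standard definitions of a Borel subalgebra is a useful addition, but otherwise there is nothing to compare.
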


\subsection{Tools from algebraic geometry}

In this section we describe main tools from algebraic geometry that will be used in the proofs. In many cases, we need to calculate dimensions of fibers of suitably constructed maps. For easier access later, we recall here what it says:\\ 

\begin{theorem}\label{fdt}[\textbf{Fibre Dimension Theorem}]\cite[Theorem 11.12]{Haris} Let $X$ be a quasi-projective variety and $\pi:X \to \mathbb{P}^n$ a regular map; let $Y$ be the closure of the image. For any $p \in X$, let $X_p = \pi^{-1}(\pi(p)) \subset X$ be the fibre of $\pi$ through $p$, and let $\mu(p) = \dim_p(X_p)$ be the local dimension of $X_p$ at $p$. Then, for any $m$ the locus of points $p \in X$ such that $\dim_p(X_p) \geq m$ is closed in $X$. Moreover, if $X_0 \subset X$ is any irreducible component, $Y_0 \subset Y$ the closure of its image and $\mu$ the minimum value of $\mu(p)$ on $X_0$, then:\\
\begin{equation*} \dim(X_0) = \dim(Y_0)+\mu.\\ \end{equation*}
\end{theorem}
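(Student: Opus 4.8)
The plan is to deduce the statement from two standard facts about a dominant morphism of varieties, since the hypothesis that the target is $\mathbb{P}^{n}$ is used only to know that $X$ is quasi-projective: replacing $\mathbb{P}^{n}$ by $Y=\overline{\pi(X)}$ we may assume $\pi\colon X\to Y$ is dominant, and the two facts are (i) a fibre-dimension estimate for a dominant morphism of \emph{irreducible} varieties, and (ii) upper semicontinuity of the local fibre dimension $p\mapsto\dim_{p}X_{p}$.

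For (i) I would prove: if $f\colon X\to Y$ is a dominant morphism of irreducible varieties and $d=\dim X-\dim Y$, then every nonempty fibre satisfies $\dim_{p}X_{p}\ge d$, and there is a dense open $V\subseteq Y$ over which $\dim X_{p}=d$ identically. The equality over $V$ comes from choosing a transcendence basis so that $F(X)$ is a finite extension of $F(Y)(t_{1},\dots,t_{d})$; shrinking $Y$, this realises a dense open of $X$ as the total space of a generically finite dominant map onto $Y\times\mathbb{A}^{d}$, and finite maps preserve fibre dimension while $Y\times\mathbb{A}^{d}\to Y$ has all fibres of dimension $d$. For the lower bound on an arbitrary fibre: near $p$, $X_{p}$ contains a component of the common zero-locus on $X$ of the pullbacks of a system of parameters of $Y$ at the image of $p$ (that is, $\dim Y$ functions), and by Krull's principal ideal theorem cutting by one function drops local dimension by at most one, so $\dim_{p}X_{p}\ge\dim X-\dim Y$.

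For (ii) and the conclusion: upper semicontinuity of $p\mapsto\dim_{p}X_{p}$ I would obtain by Noetherian induction on $\dim X$. Decomposing $X$ into irreducible components reduces to the case $X$ irreducible with $Y=\overline{f(X)}$; by (i), the locus where the fibre dimension exceeds $d=\dim X-\dim Y$ lies inside the proper closed subset $f^{-1}(Y\setminus V)$, which maps into a variety of dimension $<\dim Y$, so the inductive hypothesis applies there and closedness of $\{\,p:\dim_{p}X_{p}\ge k\,\}$ follows for every $k$. For the displayed formula, let $X_{0}\subseteq X$ be an irreducible component and $Y_{0}=\overline{f(X_{0})}$, and apply (i) to $f|_{X_{0}}\colon X_{0}\to Y_{0}$. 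A general point $p$ of $X_{0}$ lies on no other component of $X$, so in a neighbourhood of $p$ the fibre $X_{p}$ of $\pi$ agrees with the fibre of $X_{0}\to Y_{0}$ through $p$; hence $\mu(p)=\dim X_{0}-\dim Y_{0}$ for $p$ in a dense open subset of $X_{0}$, and by (ii) this generic value equals the minimum $\mu$. Therefore $\dim X_{0}=\dim Y_{0}+\mu$.

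The step I expect to be the main obstacle is fact (i), and inside it the uniform lower bound $\dim_{p}X_{p}\ge\dim X-\dim Y$ together with the existence of the dense open on which equality holds: this is where the actual commutative algebra enters (Krull's height theorem, or Noether normalisation and control of the generically finite part). Once (i) is available, the semicontinuity statement and the bookkeeping with irreducible components are formal. A minor point to keep in mind is that $X$ is only quasi-projective, so the fibres need not be complete, but none of the dimension assertions is affected by passing to a compactification.
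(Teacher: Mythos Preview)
The paper does not prove this statement at all: it is quoted verbatim from Harris's textbook \cite[Theorem 11.12]{Haris} and used as a black box throughout, so there is no ``paper's own proof'' to compare against. Your sketch is a correct outline of the standard proof (and is essentially the argument Harris gives): the two pillars are the inequality $\dim_p X_p\ge \dim X-\dim Y$ for dominant maps of irreducible varieties via Krull's height theorem, and the existence of a dense open over which equality holds via a transcendence-basis / Noether-normalisation argument; upper semicontinuity and the component-by-component bookkeeping then follow formally by Noetherian induction, exactly as you describe.
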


An important tool will also be smooth points, so we recall their definition.\\

\begin{definition} The \textbf{tangent space} to the variety $V \subset \mathbb{A}^n$  at the point $p$ is the affine space which is the zero locus of $J_1(V) = \{dF:F \in J(V)\}$, where $dF = \sum_{i=1}^n \frac{\partial F}{\partial X_i}(p)(X_i-p_i)$ and $J(V)$ is the vanishing ideal of $V$, i.e. the ideal of polynomials that vanish in every point of $V$.\end{definition}

Dimension of the tangent space at the point $p$ is always bigger than or equal to the local dimension of the variety at that point.

\begin{definition} We say that $p \in V$ is a \textbf{smooth point} of the variety $V$ if the local dimension of $V$ at $p$ is equal to the dimension of the tangent space at $p$.\\ \end{definition}

Our goal is to prove that the variety $C_m(L)$ is reducible in most cases. To do that, we need to find at least two components in $C_m(L)$. One component, which is always present, is introduced below and called regular component. In order to prove that $C_m(L)$ is reducible, we will find a smooth point in an irreducible component of the variety which will be different than the regular component. We will compute the dimension of some irreducible subvariety of $C_m(L)$, which is not equal to the regular component and contains a given point $p$, and estimate the dimension of the tangent space to $C_m(L)$ at the point $p$. It will follow that the dimension of that tangent space is not bigger than the dimension of the chosen subvariety of $C_m(L)$. As mentioned above, it can not be lower either, hence the two dimensions are equal, which means that the point is smooth, and the chosen subvariety of $C_m(L)$ containing $p$ is a component.\\

However, it is not easy to compute the tangent space to $C_m(L)$, as the ideal defined by the natural commutativity relations is often not radical, see \cite{Jel} for the case $m \geq 4,L = \mathfrak{gl}_l,l \geq 8$. We note that in small cases the ideal is radical, for example for $m=2,L = \mathfrak{gl}_l, l \leq 4$ \cite{Hreins}. We therefore define an auxiliary space whose dimension will be larger than or equal to the dimension of the tangent space (see Lemma \ref{tangentspace} below).\\

\begin{definition} To a point $(x_{1,0},...,x_{m,0}) \in C_m(L)$ we associate the vector space \\
\begin{equation}\label{tspace}T_{(x_{1,0},...,x_{m,0})}(C_m(L)) = \{(x_1,...,x_m) \in L^m; [x_{i,0},x_j] = [x_{j,0},x_i] (\forall 1 \leq i,j \leq m)\},\\ \end{equation} 
which we call \textbf{$T$-space}.\\ \end{definition}

\textit{Remark:} For readers familiar with schemes, we note that our $T$-space is actually the tangent space to the scheme $Spec(F[X_1,...,X_{m \dim(L)}]/I)$, where $I$ is the ideal defined by natural quadratic polynomials that describe commutativity relations. However, schemes are never needed in the paper, hence we avoid introducing them, so that the paper is accessible to a wider audience.\\

\subsection{Basic results about $C_m(L)$}

It is known that $C_m(L)$ is a variety. We reprove this fact by giving concrete polynomial equations that define it, as we will need those equations in the second part of Theorem on adding diagonals (Theorem \ref{maintheorem2}). Choose a basis $\{a_1,...,a_n\}$ of $L$. The element $[a_{r},a_{l}] \in L$ can be represented via the basis of $L$ as $[a_{r},a_{l}] = \sum_{k = 1}^{n} \gamma_{r,l,k}a_{k}$, where $\gamma_{r,l,k} \in F$ are structure constants. Given $x_r,x_l \in L$, we can uniquely represent $x_{r} = \sum_{i = 1}^{n}\alpha_{r,i}a_{i}$ and $x_{l} = \sum_{j= 1}^{n} \alpha_{l,j}a_{j}$. Now we have\\
\begin{equation*}[x_{r},x_{l}] = \sum_{k = 1}^{r}(\sum_{j = 1}^{n}\sum_{i = 1}^{n} \alpha_{r,i}\alpha_{l,j}\gamma_{i,j,k})a_{k}.\end{equation*}
Since $\{a_{1},...,a_{n}\}$ is a basis of $L$, the condition $[x_{r},x_{l}] = 0$ is equivalent to\\
\begin{equation}\label{jednacine} \sum_{j = 1}^{n}\sum_{i = 1}^{n} \alpha_{r,i}\alpha_{l,j}\gamma_{i,j,k} = 0 \end{equation}\\
for all $1 \leq r,l \leq m$ and all $k \leq n$. These polynomial equations prove $C_{m}(L)$ is an affine variety. Note that a different choice of basis of $L$ would give an isomorphic variety.\\
\\ 

\begin{definition} Let $\mathcal{L}$ be a reductive Lie algebra, let $\mathcal{G}$ be a connected algebraic group such that $\mathfrak{Lie}(\mathcal{G}) = \mathcal{L}$ and let $\mathcal{H}$ be a Cartan subalgebra of $\mathcal{L}$. The \textbf{regular component} of $C_{m}(\mathcal{L})$ is $\mathrm{Reg}_{m,\mathcal{L}} = \overline{\mathcal{G} \cdot (\mathcal{H} \times \mathcal{H} \times ... \times \mathcal{H})}$ ($\mathcal{H}$ being written $m$ times) where overline means Zariski closure.\\ \end{definition}

The regular component of $C_m(\mathcal{L})$ is closely related to regular semisimple elements of $\mathcal{L}$, so we first recall the definition of regular semisimple elements.\\

\begin{definition} An element $x \in \mathcal{L}$ is called \textbf{regular semisimple} if $C_{\mathcal{L}}(x)$ is a Cartan subalgebra of $\mathcal{L}$.\\ \end{definition}

The regular component is indeed a component of $C_m(\mathcal{L})$. This is well-known, but we were unable to find an explicit proof in the literature, so we give the proof for the sake of completeness.\\

\begin{lemma}\label{Reg} For any $m \in \mathbb{N}$ and every reductive Lie algebra $\mathcal{L}$, the set $\mathrm{Reg}_{m,\mathcal{L}}$ is an irreducible component of $C_{m}(\mathcal{L})$, and has dimension\\
\begin{equation*} \dim(\mathrm{Reg}_{m,\mathcal{L}}) = \dim(\mathcal{L}) + (m - 1)\mathrm{rank}(\mathcal{L}).\\ \end{equation*} \end{lemma}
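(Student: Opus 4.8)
The plan is to realize $\mathrm{Reg}_{m,\mathcal{L}}$ as the closure of the image of an explicit morphism, compute its dimension with the Fibre Dimension Theorem, and then exhibit a point of $\mathrm{Reg}_{m,\mathcal{L}}$ at which the local dimension of $C_m(\mathcal{L})$ is no larger than $\dim\mathrm{Reg}_{m,\mathcal{L}}$. First, since $\mathcal{H}$ is abelian, every tuple in $\mathcal{H}^m$ lies in $C_m(\mathcal{L})$, and as $C_m(\mathcal{L})$ is closed and stable under the adjoint action of $\mathcal{G}$, we get $\mathrm{Reg}_{m,\mathcal{L}}\subseteq C_m(\mathcal{L})$. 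Consider the morphism
\[ \pi\colon \mathcal{G}\times \mathcal{H}^m \longrightarrow C_m(\mathcal{L}), \qquad (g,h_1,\dots,h_m)\mapsto (g\cdot h_1,\dots,g\cdot h_m). \]
Its source is irreducible ($\mathcal{G}$ being connected and $\mathcal{H}^m$ an affine space), hence so is the closure of its image, which is exactly $\mathrm{Reg}_{m,\mathcal{L}}$; this gives irreducibility.

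For the dimension I would apply Theorem \ref{fdt} to $\pi$ (composed with an affine embedding into a projective space). Since $\dim(\mathcal{G}\times\mathcal{H}^m)=\dim(\mathcal{L})+m\,\mathrm{rank}(\mathcal{L})$, it suffices to show that the minimal fibre dimension $\mu$ of $\pi$ equals $\mathrm{rank}(\mathcal{L})$. Fix a regular semisimple $h\in\mathcal{H}$ (these form a dense open subset of $\mathcal{H}$) and examine the fibre over a point $(h,h_2,\dots,h_m)$ with all $h_i\in\mathcal{H}$. If $(g,k_1,\dots,k_m)$ lies in this fibre, then $g\cdot k_1=h$, so $k_1\in\mathcal{H}$ is regular semisimple, whence $C_{\mathcal{L}}(k_1)=\mathcal{H}=C_{\mathcal{L}}(h)$ and $g\cdot\mathcal{H}=g\cdot C_{\mathcal{L}}(k_1)=C_{\mathcal{L}}(g\cdot k_1)=\mathcal{H}$, i.e. $g\in N_{\mathcal{G}}(\mathcal{H})$; the remaining $k_i=g^{-1}\cdot h_i$ are then determined. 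Thus this fibre embeds into $N_{\mathcal{G}}(\mathcal{H})$, and by Proposition \ref{b}(1) together with the self-normalization of a Cartan subalgebra (the computation $N_{\mathcal{L}}(\mathcal{H})=\mathcal{H}$, analogous to the one in the proof of Lemma \ref{kolekcija}(3)) we get $\dim N_{\mathcal{G}}(\mathcal{H})=\dim N_{\mathcal{L}}(\mathcal{H})=\mathrm{rank}(\mathcal{L})$, so $\mu\le\mathrm{rank}(\mathcal{L})$. For the reverse inequality, note that the fibre of $\pi$ through any $(g_0,h_1,\dots,h_m)$ contains the set $\{(g_0 s,h_1,\dots,h_m):s\in\mathrm{Stab}_{\mathcal{G}}(h_1,\dots,h_m)\}$, which is isomorphic to $\mathrm{Stab}_{\mathcal{G}}(h_1,\dots,h_m)$; by Proposition \ref{b}(2),(5) its dimension equals $\dim C_{\mathcal{L}}(h_1,\dots,h_m)\ge\dim(\mathcal{H})=\mathrm{rank}(\mathcal{L})$, since $\mathcal{H}\subseteq C_{\mathcal{L}}(h_i)$ for each $i$. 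Hence $\mu=\mathrm{rank}(\mathcal{L})$ and
\[ \dim(\mathrm{Reg}_{m,\mathcal{L}})=\dim(\mathcal{L})+m\,\mathrm{rank}(\mathcal{L})-\mathrm{rank}(\mathcal{L})=\dim(\mathcal{L})+(m-1)\mathrm{rank}(\mathcal{L}). \]

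To see that $\mathrm{Reg}_{m,\mathcal{L}}$ is an irreducible component, take $p=(h,0,\dots,0)$ with $h\in\mathcal{H}$ regular semisimple; then $p\in\mathcal{H}^m\subseteq\mathrm{Reg}_{m,\mathcal{L}}$. A direct inspection of the defining equations of the $T$-space at $p$ shows that the conditions $[x_{i,0},y_j]=[x_{j,0},y_i]$ reduce to $[h,y_j]=0$ for $j\ge2$ (the equations with $i,j\ge2$ being trivial), while $y_1$ is unconstrained; since $C_{\mathcal{L}}(h)=\mathcal{H}$, this $T$-space is $\mathcal{L}\times\mathcal{H}^{m-1}$, of dimension $\dim(\mathcal{L})+(m-1)\mathrm{rank}(\mathcal{L})$. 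As the Zariski tangent space of $C_m(\mathcal{L})$ at $p$ is contained in the $T$-space (the commutativity polynomials lie in the vanishing ideal) and the local dimension of $C_m(\mathcal{L})$ at $p$ is at most the dimension of that tangent space, while $p\in\mathrm{Reg}_{m,\mathcal{L}}$ and $\mathrm{Reg}_{m,\mathcal{L}}$ is irreducible, we obtain
\[ \dim(\mathrm{Reg}_{m,\mathcal{L}})\le\dim_p(C_m(\mathcal{L}))\le\dim(\mathcal{L})+(m-1)\mathrm{rank}(\mathcal{L})=\dim(\mathrm{Reg}_{m,\mathcal{L}}). \]
Hence all these are equal (in particular $p$ is a smooth point of $C_m(\mathcal{L})$). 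Now pick any irreducible component $Z$ of $C_m(\mathcal{L})$ with $\mathrm{Reg}_{m,\mathcal{L}}\subseteq Z$; then $p\in Z$, so $\dim(Z)\le\dim_p(C_m(\mathcal{L}))=\dim(\mathrm{Reg}_{m,\mathcal{L}})\le\dim(Z)$, forcing $\dim(Z)=\dim(\mathrm{Reg}_{m,\mathcal{L}})$ and therefore $Z=\mathrm{Reg}_{m,\mathcal{L}}$, so $\mathrm{Reg}_{m,\mathcal{L}}$ is a component.

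The main obstacle is the fibre-dimension bookkeeping for $\pi$: one must pin the generic fibre down precisely from both sides — that it is no larger than $N_{\mathcal{G}}(\mathcal{H})$ (via regular semisimple elements and self-normalization of $\mathcal{H}$) and no smaller than $\mathrm{rank}(\mathcal{L})$ (via the stabilizer of a tuple in $\mathcal{H}^m$). Once $\dim\mathrm{Reg}_{m,\mathcal{L}}$ is known and the $T$-space at $p$ is computed, the component claim is purely formal.
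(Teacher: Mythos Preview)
Your proof is correct. The irreducibility and dimension arguments coincide with the paper's: both realize $\mathrm{Reg}_{m,\mathcal{L}}$ as the closure of the image of $\pi\colon\mathcal{G}\times\mathcal{H}^m\to C_m(\mathcal{L})$ and compute the generic fibre dimension to be $\dim N_{\mathcal{G}}(\mathcal{H})=\mathrm{rank}(\mathcal{L})$ via self-normalization of $\mathcal{H}$. (The paper identifies the fibre exactly; you sandwich it between $\mathrm{Stab}_{\mathcal{G}}(h_1,\dots,h_m)$ and $N_{\mathcal{G}}(\mathcal{H})$, which is a slightly more careful application of the stated Fibre Dimension Theorem.)

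The genuine difference is in the component argument. The paper observes that the set of commuting $m$-tuples of regular semisimple elements is open in $C_m(\mathcal{L})$; since every such tuple lies in a single Cartan subalgebra (all conjugate to $\mathcal{H}$), this open set is contained in $\mathcal{G}\cdot\mathcal{H}^m$, and its closure---being a union of components and at the same time equal to the irreducible set $\mathrm{Reg}_{m,\mathcal{L}}$---is therefore a single component. You instead compute the $T$-space at $p=(h,0,\dots,0)$ with $h$ regular semisimple, obtain $\mathcal{L}\times\mathcal{H}^{m-1}$, and conclude via Lemma~\ref{tangentspace} that $p$ is a smooth point of $C_m(\mathcal{L})$, forcing any component through $p$ to have dimension exactly $\dim\mathrm{Reg}_{m,\mathcal{L}}$. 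The paper's route is shorter and avoids tangent-space machinery here; your route is self-contained once the $T$-space framework is in place and is exactly the mechanism the paper deploys later (Theorem~\ref{maintheorem2}, Corollary~\ref{fin}) to certify \emph{non-regular} components, so it has the virtue of unifying the method.
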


\begin{proof} We first show that $\mathrm{Reg}_{m,\mathcal{L}}$ is irreducible. $\mathrm{Reg}_{m,\mathcal{L}}$ is the closure of the image of a morphism from an irreducible variety\\
\begin{equation*}\mu : \mathcal{G} \times \mathcal{H} \times \mathcal{H} \times ... \times \mathcal{H} \to \mathrm{Reg}_{m,\mathcal{L}}\end{equation*}
\begin{equation*}(g,x_{1},...,x_{m}) \mapsto (g \cdot x_{1},...,g\cdot x_{m}),\end{equation*}
and as such is irreducible. \\

The set of all regular semisimple elements is open and dense in $\mathcal{L}$ in the Zariski topology (see for example \cite[Corollary 2.1.13]{Col} and references therein). The set of commuting tuples of regular semisimple elements is therefore Zariski open in $C_{m}(\mathcal{L})$ hence its closure is a union of components of $C_m(\mathcal{L})$. To show that $\mathrm{Reg}_{m,\mathcal{L}}$ is a component, it therefore suffices to prove that any commuting tuple of regular semisimple elements $(x_{1},...,x_{m})$  belongs to $\mathcal{G} \cdot (\mathcal{H}\times \mathcal{H}\times ... \times \mathcal{H})$.\\

Since $x_{1}$ is a regular semisimple element, its centralizer $C_{\mathcal{L}}(x_{1})$ is a Cartan subalgebra. All Cartan subalgebras are conjugate, i.e. there exists $g \in \mathcal{G}$ such that $g \cdot \mathcal{H} = C_{\mathcal{L}}(x_{1})$. So, then $x_{i} \in g \cdot \mathcal{H}$ for all $1\leq i\leq m$. In other words, $(x_{1},...,x_{m}) \in \mathcal{G} \cdot (\mathcal{H}\times \mathcal{H} \times ... \times \mathcal{H})$,  and this finishes the proof that $\mathrm{Reg}_{m,\mathcal{L}}$ is indeed a component.\\

We now prove that $\dim(\mathrm{Reg}_{m,\mathcal{L}}) = \dim(\mathcal{G}) + (m - 1)\mathrm{rank}(\mathcal{L})$. For this we need Fibre Dimension Theorem (Theorem \ref{fdt}). First, recall the mapping $\mu$. We know that $\mathrm{Reg}_{m,\mathcal{L}}$ is the closure of the image of this map. All that remains is to calculate the dimension of the fibres on the open set where the first element in the $m$-tuple is regular semisimple. For this, we consider:\\
\begin{equation}\label{fibra}(g \cdot x_{1},...,g\cdot x_{m}) = (g' \cdot x'_{1},...,g' \cdot x'_{m}). \end{equation}\\\

We first consider the first components of the $m$-tuples in (\ref{fibra}). The equation $g \cdot x_{1} = g' \cdot x'_{1}$ is equivalent to $(g')^{- 1}g \cdot x_{1} = x'_{1}$. It follows that $(g')^{- 1}g \cdot C_{\mathcal{L}}(x_{1}) = C_{\mathcal{L}}(x_{1}')$. As $x_{1}, x_{1}' \in \mathcal{H}$ are regular semisimple, their centralizers are Cartan subalgebras. Every two elements of $\mathcal{H}$ commute, so $\mathcal{H} \subset C_{\mathcal{L}}(x_1)$ and $\mathcal{H} \subset C_{\mathcal{L}}(x_1')$. But then, as dimensions of any two Cartan subalgebras are equal, we have $C_{\mathcal{L}}(x_1) = C_{\mathcal{L}}(x_1') = \mathcal{H}$. So, we have $(g')^{-1}g \cdot \mathcal{H} = \mathcal{H}$, or $(g')^{- 1}g \in N_{\mathcal{G}}(\mathcal{H})$. Conversely, if $(g')^{-1}g \in N_{\mathcal{G}}(\mathcal{H})$ and $x_1' = (g')^{-1}g \cdot x_1$, then it is clear that $x_1' \in \mathcal{H}$ and $g \cdot x_1 = g' \cdot x_1'$.\\
 
This finishes our analysis of the first equation from (\ref{fibra}) . We consider all other $m- 1$ equations at once: for every $2 \leq i \leq m$ we have $g \cdot x_{i} = g' \cdot x'_{i}$. Then $(g')^{- 1}g \cdot x_{i} = x'_{i}$. Conversely, $x_i'= (g')^{-1}g \cdot x_i$ indeed belongs to $\mathcal{H}$, as $(g')^{-1}g \in N_{\mathcal{G}}(\mathcal{H})$. This proves that all the $x'_{i}$ for $2\leq i \leq m$ are uniquely defined, once $x_{i}$, $g,g'$ are fixed,  and hence bring no additional dimension. More precisely,\\
\begin{equation*} \mu^{- 1}(g \cdot x_{1},...,g \cdot x_{m}) = \{(gP^{- 1}, Px_{1},...,Px_{m}): P \in N_{\mathcal{G}}(\mathcal{H})\} .\end{equation*}\\

By Proposition \ref{b} we have $\mathfrak{Lie}(N_{\mathcal{G}}(\mathcal{H})) = N_{\mathcal{L}}(\mathcal{H})$ which is equal to $\mathcal{H}$, as $\mathcal{H}$ is a Cartan subalgebra of $\mathcal{L}$. This proves that $N_{\mathcal{G}}(\mathcal{H})$ has the same dimension as $\mathcal{H}$, i.e. $\mathrm{rank}(\mathcal{L})$, hence the dimension of the fibre is $\mathrm{rank}(\mathcal{L})$. By the Fibre Dimension Theorem, we have that\\
\begin{equation*} \dim(\mathrm{Reg}_{m,\mathcal{L}}) = \dim(\mathcal{G}) + m\ \cdot\ \dim(\mathcal{H}) - \mathrm{rank}(\mathcal{L}) = \dim(\mathcal{G}) + (m - 1)\mathrm{rank}(\mathcal{L}).\end{equation*} \end{proof}
 
\begin{lemma}\label{tangentspace} Let $L$ be a simple Lie algebra. The tangent space to $C_m(L)$ at $(x_{1,0},...,x_{m,0})$ is a subspace of the $T$-space associated to $(x_{1,0},...,x_{m,0})$.\\
\end{lemma}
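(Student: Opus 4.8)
The plan is to compare the vanishing ideal of $C_m(L)$ with the ideal generated by the natural quadratic polynomials \eqref{jednacine}. Let me call the latter ideal $I$, so $C_m(L) = V(I)$ set-theoretically, although $I$ need not be radical. First I would fix the point $p = (x_{1,0},\dots,x_{m,0}) \in C_m(L)$ and recall that by definition the tangent space to $C_m(L)$ at $p$ is the zero locus of $J_1 = \{ dF : F \in J(C_m(L))\}$, where $J(C_m(L))$ is the full vanishing ideal. The $T$-space, on the other hand, is cut out precisely by the differentials at $p$ of the generators \eqref{jednacine} of $I$: indeed, writing $x_r = \sum_i \alpha_{r,i} a_i$ and differentiating the quadratic $f_{r,l,k} = \sum_{i,j} \alpha_{r,i}\alpha_{l,j}\gamma_{i,j,k}$ at the point with coordinates $\alpha^0_{r,i}$ gives a linear form whose vanishing is exactly the condition $[x_{i,0}, x_j] = [x_{j,0}, x_i]$ after collecting terms (the cross terms from the product rule produce the two commutators, using antisymmetry of the structure constants). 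So the $T$-space is $V(\{ dF : F \in I\})$, where differentials are taken at $p$.

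The key step is then the inclusion $I \subseteq J(C_m(L))$, which holds because every generator $f_{r,l,k}$ of $I$ vanishes on all of $C_m(L)$ by construction (that is exactly how \eqref{jednacine} was derived). Taking differentials at $p$ is a linear operation, so $I \subseteq J(C_m(L))$ implies $\{dF : F \in I\} \subseteq \{dF : F \in J(C_m(L))\} = J_1$. Passing to zero loci reverses inclusions: $V(J_1) \subseteq V(\{dF : F \in I\})$, i.e. the tangent space to $C_m(L)$ at $p$ is contained in the $T$-space at $p$. That is the claim.

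I expect the only point requiring care is the identification of the $T$-space with the zero locus of the differentials of the generators \eqref{jednacine} — i.e. checking that differentiating the quadratic commutativity relations really does reproduce the linear system $[x_{i,0}, x_j] = [x_{j,0}, x_i]$ rather than something off by signs or indices. This is a routine application of the Leibniz rule: $d(\alpha_{r,i}\alpha_{l,j})|_p = \alpha^0_{r,i}(\alpha_{l,j} - \alpha^0_{l,j}) + \alpha^0_{l,j}(\alpha_{r,i} - \alpha^0_{r,i})$, and summing against $\gamma_{i,j,k}$ and recognizing the two resulting sums as the $k$-th coordinates of $[x_{r,0}, x_l - x_{l,0}]$ and $[x_{r} - x_{r,0}, x_{l,0}]$ respectively (hence, after translating coordinates so that the tangent vector is $(x_1,\dots,x_m)$, as $[x_{r,0},x_l]$ and $[x_{r},x_{l,0}]$, whose sum vanishes iff $[x_{r,0},x_l] = [x_{l,0},x_r]$ by antisymmetry) finishes it. Everything else is the formal fact that passing to differentials and then to zero loci is inclusion-reversing on ideals. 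No scheme theory is needed, consistent with the remark following the definition of the $T$-space.
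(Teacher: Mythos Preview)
Your proposal is correct and follows essentially the same route as the paper: both identify the $T$-space with the zero locus of the differentials at $p$ of the quadratic generators \eqref{jednacine}, then use the inclusion $I \subset \sqrt{I} = J(C_m(L))$ to conclude that the genuine tangent space (defined via the radical) is contained in the $T$-space. The paper carries out the differential computation in coordinates exactly as you outline, arriving at $[x_r,x_{l,0}] + [x_{r,0},x_l] = 0$ and then rewriting this as the $T$-space condition $[x_{r,0},x_l] = [x_{l,0},x_r]$.
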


\begin{proof} Let $I$ be the ideal generated by the polynomials from (2). We will prove that the $T$-space is equal to $V(\{dF:F \in I\})$. The lemma then follows from the fact that $I \subset \sqrt{I}$ and that $\sqrt{I}$ is the vanishing ideal of $C_m(L)$.\\

We compute the differentials of the generators of $I$. Let $\{a_1,...,a_n\}$ be the basis of $L$ chosen at the beginning of this subsection, let $r,l \in \{1,...,m\}$, $r \neq l$ be arbitrary. Then we can put\\
\begin{equation*} x_r = \sum_{i=1}^n \alpha_{r,i} a_i,\ x_l = \sum_{j = 1}^{n}\alpha_{l,j}a_{j},\ \mathrm{and}\ x_{r,0} = \sum_{i=1}^n \alpha_{r,i,0} a_i,\ x_{l,0} = \sum_{j=1}^n \alpha_{l,j,0} a_j.\end{equation*}

For every $r,l \in \{1,...,m\}$ and $k \leq n$ let:\\
\begin{equation*} f_{r,l,k}(\alpha_{r, 1},...,\alpha_{r,n},\alpha_{l,1},...,\alpha_{l,n}) = \sum_{i=1}^n \sum_{j=1}^n \alpha_{r,i} \alpha_{l,j} \gamma_{i,j,k}.\end{equation*}\\
Note that polynomials $f_{r,l,k}$ generate $I$. The linear part of this polynomial at the point $(\alpha_{r,1,0},...,\alpha_{l,n,0})$ is equal to\\
\begin{equation*}d_{(x_{1,0},...,x_{m,0})}f_{r,l,k} = \sum_{i=1}^n \sum_{j=1}^n \alpha_{l, j,0} (\alpha_{r,i} -\alpha_{r, i,0})\gamma_{i,j,k}+ \sum_{i=1}^n\sum_{j=1}^n \alpha_{r, i,0} (\alpha_{l,j} - \alpha_{l, j,0}) \gamma_{i,j,k},\end{equation*}\\
and the condition for the tangent space implies that this is zero.\\

This happens for all $1 \leq k \leq n$, so when we multiply the equation by $a_k$ and sum over $k$ we get an equivalent equation\\
\begin{equation*} \sum_{k=1}^n\left(\sum_{i=1}^n \sum_{j=1}^n \alpha_{l,j,0} (\alpha_{r,i} -\alpha_{r, i,0})+ \sum_{i=1}^n\sum_{j=1}^n \alpha_{r,i,0} (\alpha_{l,j} - \alpha_{l, j,0})\right) \gamma_{i,j,k} a_k = 0. \end{equation*}\\
Recalling the definition of the structure constants we can rewrite the last sum as\\
\begin{equation*}\left[\sum_{i=1}^n\alpha_{r,i}a_i, \sum_{j=1}^n\alpha_{l,j,0}a_j\right] - 2 \left[\sum_{i=1}^n \alpha_{r,i,0}a_i,\sum_{j=1}^n \alpha_{l,j,0}a_j\right] + \left[\sum_{i=1}^n\alpha_{r,i,0}a_i,\sum_{j=1}^n\alpha_{l,j} a_j\right] = 0, \end{equation*}\\
which is equivalent to $[x_r,x_{l,0}]-2[x_{r,0},x_{l,0}] + [x_{r,0},x_l] = 0.$\\

The point $(x_{1,0},x_{2,0},...,x_{m,0})$ lies on the variety $C_{m}(L)$ hence $[x_{r,0},x_{l,0}]=0$ and we get $[x_r,x_{l,0}]+ [x_{r,0},x_l] = 0$ for $r,l \in \{1,2,...,m\}$, which are the equations for $T$- space.\\ \end{proof}

\section{Theorem on Adding Diagonals}

The aim of this section is to prove a theorem, which says that if $C_{m}(L')$ is reducible for some simple Lie subalgebra $L'$ of $L$ whose Dynkin diagram is a subdiagram of the Dynkin diagram of $L$, then in many cases $C_{m}(L)$ is reducible.\\

We introduce the following notation. For $x_1' \in L$ we denote:\\
\begin{equation*} x_1' +H_1 = \{x_1'+h: h \in H_1\}.\end{equation*}
Here is the formulation of the first part of the main theorem of this section:\\
\\
\begin{theorem}\label{maintheorem1}[\textbf{Theorem on adding diagonals, first part}] Assume the notation given in Section 2.1 for $L,L',G,H,H',H_1,\Phi,\Phi',\Delta, \Delta'$, and let $G'$ be a connected simple algebraic subgroup of $G$ such that $\mathfrak{Lie}(G')=L'$. Assume that $x_1' \in L'$ is nilpotent and $S'$ is a $\mathrm{Stab}_{G'}(x_{1}')^{\circ}$-stable irreducible subvariety of $C_{m - 1}(C_{L'}(x_{1}'))$ containing only $(m-1)$-tuples of nilpotent elements. Let\\
\begin{equation*} C' = G' \cdot (\{x_1'\} \times S') \subset C_m(L'). \end{equation*}\\
Furthermore, let\\
\begin{equation*} S = \{(x_2' + h_{2},..,x_{m}' + h_m): (x_2',...,x_{m}') \in S',\ h_2, ..., h_{m} \in H_{1}\} \subset C_{m-1}(L)\end{equation*}\\
and\\
\begin{equation*}C = G \cdot ((x_1'+H_1) \times S) \subset C_m(L). \end{equation*}\\
Then\\
\begin{equation*}\dim(C) = \dim(C')+\dim(G) - \dim(G') +(m - 1)(|\Delta| - |\Delta'|).\end{equation*}\\
\end{theorem}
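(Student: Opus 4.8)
The strategy is to compute $\dim(C)$ via the Fibre Dimension Theorem (Theorem \ref{fdt}) applied to the orbit map
\[ \pi : G \times \bigl((x_1'+H_1)\times S\bigr) \to C, \qquad (g, y_1, \dots, y_m) \mapsto (g\cdot y_1, \dots, g\cdot y_m), \]
whose image is dense in the irreducible variety $C$. The source is irreducible of dimension $\dim(G) + \dim(H_1) + \dim(S)$. Since $S' \subset C_{m-1}(C_{L'}(x_1'))$ and each $h_i \in H_1$ lies in the center of $L'+H_1$ with $H_1 \cap L' = \{0\}$ (Lemma \ref{d}), we get $\dim(S) = \dim(S') + (m-1)\dim(H_1) = \dim(S') + (m-1)(|\Delta|-|\Delta'|)$. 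Similarly, $C' = G'\cdot(\{x_1'\}\times S')$ is the image of $G'\times S'$ under an analogous orbit map, and I will first record a formula for $\dim(C')$ in terms of $\dim(S')$ by computing the generic fibre of that map; the generic fibre there is $\mathrm{Stab}_{G'}(x_1')\cap \mathrm{Stab}_{G'}(S')$ acting, and since $S'$ is $\mathrm{Stab}_{G'}(x_1')^{\circ}$-stable the relevant stabilizer over a generic point is governed by $\mathrm{Stab}_{G'}(x_1')$; this yields $\dim(C') = \dim(G') - \dim(\mathrm{Stab}_{G'}(x_1')) + \dim(S') + (\text{correction from stabilizing }S')$, and the point of the final identity is precisely that the analogous corrections on the $G$ side and $G'$ side match up.

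The heart of the argument is the computation of the generic fibre of $\pi$. Fix a generic point of $C$ of the form $p = (x_1'+h, x_2'+h_2, \dots, x_m'+h_m)$ with $h \in \mathfrak{I}$ (this is an open dense condition on $(x_1'+H_1)$ by Corollary \ref{i}, and genericity of the $S$-part can be arranged so that the $S'$-part sits in general position). A preimage $(g, y_1,\dots,y_m)$ must satisfy $g\cdot y_1 = x_1'+h$; taking semisimple and nilpotent parts via Proposition \ref{b}(4), and using that the nilpotent part of any element of $x_1'+H_1$ is $x_1'$ while the semisimple part is the $H_1$-component, we deduce $g$ carries the $H_1$-part to $h$ and $x_1'$ to $x_1'$, so in particular $g \in N_G(L'+H_1)$ by Corollary \ref{k} after adjusting by elements of $H_1$. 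Then I analyze the remaining equations $g\cdot y_i = x_i'+h_i$ inside $L'+H_1 = C_L(h)$; writing $y_i = z_i + k_i$ with $z_i \in L'$, $k_i \in H_1$, the component in $H_1$ is forced and the component in $L'$ reduces the problem to a fibre computation for the $G'$-orbit map on $\{x_1'\}\times S'$, using Lemma \ref{kolekcija}(2),(4),(7) to control $N_G(L'+H_1)$ and $\mathrm{Stab}_G(x_1')\cap N_G(L'+H_1)$. The upshot will be an exact sequence / fibration relating the generic fibre of $\pi$ to the generic fibre of the $G'$-orbit map defining $C'$, with the discrepancy in dimensions being exactly $\dim(N_G(L'+H_1)) - \dim(G') + \dim H_1$ adjustments.

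Concretely, I expect to arrive at
\[ \dim(\text{generic fibre of }\pi) = \dim(\text{generic fibre of the }C'\text{ map}) + \bigl(\dim(N_G(L'+H_1)) - \dim(G')\bigr), \]
and then Lemma \ref{kolekcija}(5) gives $\dim(N_G(L'+H_1)) = \dim(L') + |\Delta|-|\Delta'| = \dim(G') + |\Delta|-|\Delta'|$, so the bracketed term is $|\Delta|-|\Delta'|$. Combining: $\dim(C) = [\dim(G)+\dim(H_1)+\dim(S)] - [\dim(\text{fibre})]$, $\dim(C') = \dim(G') + \dim(S') - \dim(\text{fibre}')$, and substituting $\dim(S) = \dim(S')+(m-1)(|\Delta|-|\Delta'|)$, $\dim H_1 = |\Delta|-|\Delta'|$ and the fibre relation above, all the stabilizer-of-$S'$ terms cancel and one is left with
\[ \dim(C) = \dim(C') + \dim(G) - \dim(G') + (m-1)(|\Delta|-|\Delta'|), \]
as claimed. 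The main obstacle is the bookkeeping in the generic fibre computation: one must be careful that the chosen generic point really does force $g \in N_G(L'+H_1)$ (this uses $h \in \mathfrak{I}$ crucially, via $C_L(h) = L'+H_1$), and that the action of $N_G(L'+H_1)$ on the $L'$-parts is, up to the kernel $\mathrm{Stab}_G(L'+H_1)$ whose Lie algebra is $H_1$ by Lemma \ref{kolekcija}(8), the same as the $G'$-action appearing in the definition of $C'$ — so that the fibres genuinely match. Everything else is dimension arithmetic.
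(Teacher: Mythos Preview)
Your proposal is correct and follows essentially the same approach as the paper: both apply the Fibre Dimension Theorem to the orbit map $G\times H_1\times S\to C$, compute the generic fibre at a point with $h\in\mathfrak{I}$ using semisimple/nilpotent parts, Corollary~\ref{k}, and Lemma~\ref{kolekcija}, and then compare with $\dim(C')$ via Lemma~\ref{Lemma11}. The only cosmetic difference is that the paper computes the fibre dimension directly via a sandwich $A\subset\mathcal{F}\subset B$ with $\dim A=\dim B=\dim C_{L'}(x_1')+|\Delta|-|\Delta'|$, rather than phrasing it as a comparison with the $C'$-fibre; note also that your description of the $C'$-fibre as ``$\mathrm{Stab}_{G'}(x_1')\cap\mathrm{Stab}_{G'}(S')$'' is a slip (the correct fibre has dimension $\dim\mathrm{Stab}_{G'}(x_1')$, using only that $S'$ is $\mathrm{Stab}_{G'}(x_1')^\circ$-stable), but you immediately land on the right formula anyway.
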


Before we prove this theorem, we prove a lemma we need in the proof. This lemma is known (see for example \cite[proof of Corollary 4.2]{Ngo}), but we prove it for the sake of completeness.\\

\begin{lemma}\label{Lemma11} Under the assumptions of the theorem the following holds:\\
\begin{equation*} \dim(C') = \dim(G') + \dim(S') - \dim(C_{L'}(x_{1}')).\end{equation*}\\
\end{lemma}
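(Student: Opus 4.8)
The plan is to compute $\dim(C')$ via the Fibre Dimension Theorem (Theorem \ref{fdt}) applied to the natural surjective morphism
\begin{equation*}
\psi : G' \times S' \to C', \qquad (g, (x_2',\dots,x_m')) \mapsto g \cdot (x_1', x_2', \dots, x_m').
\end{equation*}
Since $G' \times S'$ is irreducible (as $G'$ is connected and $S'$ is irreducible) and $C'$ is by definition the closure of the image of $\psi$, the theorem gives $\dim(C') = \dim(G' \times S') - \mu$ where $\mu$ is the generic fibre dimension. So $\dim(C') = \dim(G') + \dim(S') - \mu$, and everything reduces to showing $\mu = \dim(C_{L'}(x_1'))$.

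To identify the fibres, I would fix a point $(y_1,\dots,y_m) = g\cdot(x_1',\dots,x_m')$ in the image and analyze which pairs $(g', (z_2',\dots,z_m')) \in G'\times S'$ map to it. Looking at the first coordinate, $g'\cdot x_1' = g\cdot x_1'$ forces $g^{-1}g' \in \mathrm{Stab}_{G'}(x_1')$; write $g' = g p$ with $p \in \mathrm{Stab}_{G'}(x_1')$. The remaining coordinates then require $z_i' = p^{-1}\cdot y_i = p^{-1}g \cdot x_i'$ for $i \geq 2$, so the $z_i'$ are completely determined by $p$ — but one must check these actually lie in $S'$. This is exactly where the hypothesis that $S'$ is $\mathrm{Stab}_{G'}(x_1')^{\circ}$-stable enters: at least for $p$ in the identity component, $(z_2',\dots,z_m') = p^{-1}\cdot(x_2',\dots,x_m') \in S'$. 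Hence the fibre over a generic point is isomorphic (up to a finite cover issue, immaterial for dimension) to $\mathrm{Stab}_{G'}(x_1')$, which by Corollary \ref{u} has dimension $\dim(C_{L'}(x_1'))$. Therefore $\mu = \dim(\mathrm{Stab}_{G'}(x_1')) = \dim(C_{L'}(x_1'))$ and the formula follows. Note that since $x_1' \in L'$ and $G'$ has Lie algebra $L'$, applying Corollary \ref{u} inside $G'$ (equivalently, $\mathrm{Stab}_{G'}(x_1') = \mathrm{Stab}_G(x_1') \cap G'$ has Lie algebra $C_{L'}(x_1')$) is legitimate.

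I expect the main obstacle to be the bookkeeping around the fact that $S'$ is only assumed stable under the identity component $\mathrm{Stab}_{G'}(x_1')^{\circ}$, not the whole stabilizer. The clean way around this is to restrict the morphism $\psi$ to $G' \times S'$ but compute the fibre dimension only at points coming from $(g,s)$ with $g$ arbitrary and note that the fibre always \emph{contains} the coset $\{(gp^{-1}, p\cdot s) : p \in \mathrm{Stab}_{G'}(x_1')^{\circ}\}$, giving $\mu \geq \dim\mathrm{Stab}_{G'}(x_1')$; conversely the analysis above shows the fibre is contained in a set of that same dimension (the full stabilizer has the same dimension as its identity component). Since $\mu$ enters the Fibre Dimension Theorem as the minimum local fibre dimension on an irreducible component, and the generic fibre has dimension exactly $\dim C_{L'}(x_1')$, we get the claimed equality. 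The rest is routine: assembling $\dim(C') = \dim(G') + \dim(S') - \dim(C_{L'}(x_1'))$.
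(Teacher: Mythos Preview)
Your proposal is correct and follows essentially the same approach as the paper: apply the Fibre Dimension Theorem to the natural map $G'\times S'\to C'$, then sandwich the fibre between a copy of $\mathrm{Stab}_{G'}(x_1')^{\circ}$ (using the stability hypothesis on $S'$) and a copy of $\mathrm{Stab}_{G'}(x_1')$ to pin down the fibre dimension as $\dim C_{L'}(x_1')$. The only cosmetic slip is that $C'$ is defined as the image itself rather than its closure, but this is irrelevant for the dimension count.
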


\begin{proof} We consider the surjective mapping\\
$$f: G' \times S' \to C'$$\\
$$(g,x_{2}',...,x_{m}') \mapsto (g \cdot x_{1}', g \cdot x_{2}', ..., g \cdot x_{m}').$$\\
We first consider the fibres of this map. Denote\\
\begin{equation*} \mathcal{F} = f^{-1}(g \cdot x_1', g\cdot x_2',...,g \cdot x_m'),\end{equation*}
\begin{equation*} \mathcal{A} = \{(g',(g')^{-1}g \cdot x_2',...,(g')^{-1}g \cdot x_m'): (g')^{-1}g \in (\mathrm{Stab}_{G'}(x_1'))^{\circ}\},\end{equation*}
\begin{equation*} \mathcal{B} = \{(g', (g')^{- 1}g \cdot x_{2}',...,(g')^{- 1}g \cdot x_{m}'): (g')^{- 1}g \in \mathrm{Stab}_{G'}(x_{1}')\}.\end{equation*}
From\\
\begin{equation*} (g \cdot x_{1}', g\cdot x_{2}',..., g\cdot x_{m}') = (g' \cdot x_{1}', g' \cdot x_{2}'', ..., g' \cdot x_{m}'') \end{equation*}\\
we first get $g \cdot x_{1}' = g' \cdot x_{1}'$ so $(g')^{- 1}g \in \mathrm{Stab}_{G'}(x_{1}').$ Next, we get $x_{i}'' = (g')^{- 1}g \cdot x_{i}'$ for $2 \leq i \leq m$, so $\mathcal{F} \subset \mathcal{B}$. Next, suppose that $(g')^{- 1}g \in \mathrm{Stab}_{G'}(x_{1}')^{\circ}$ and $(x_2',...,x_m') \in S'$. Then\\
\begin{equation*}[x_{1}', (g')^{- 1}g \cdot x_{i}'] = [(g')^{- 1}g \cdot x_{1}', (g')^{- 1}g \cdot x_{i}'] = 0, \end{equation*}\\
i.e. $(g')^{-1}g \cdot x_i' \in C_{L'}(x_1')$ for every $2 \leq i \leq m$. Now, we recall that $S'$ is $\mathrm{Stab}_{G'}(x_1')^{\circ}$-stable to conclude that if $x_{i}'' = (g')^{- 1}g \cdot x_{i}'$, for all $2 \leq i \leq m$, then $(x_{2}'',...,x_{m}'') \in S'$, hence $\mathcal{A} \subset \mathcal{F}$.\\

If we put $g'= g$ in the definition of $\mathcal{A}$, it becomes obvious that $(g,x_2',...,x_m') \in \mathcal{A}$. Then\\
\begin{equation*} \dim(\mathrm{Stab}_{G'}(x_1')^{\circ}) = \dim_{(g,x_2',...,x_m')}\mathcal{A} \leq \dim_{(g,x_2',...,x_m')}\mathcal{F} \leq \dim(\mathcal{B}) = \dim(\mathrm{Stab}_{G'}(x_1')).\end{equation*}
It follows that\\
\begin{equation*} \dim_{(g,x_2',...,x_m')}\mathcal{F} = \dim(C_{L'}(x_1')).\end{equation*}
Finally, by Fibre Dimension Theorem (Theorem \ref{fdt}), we get\\
$$ \dim(C') = \dim(G') + \dim(S') - \dim(C_{L'}(x_{1}')).$$ \end{proof}

\begin{proof}[Proof of Theorem \ref{maintheorem1}]: We consider the mapping:\\
$$f: G \times H_{1} \times S \to C$$\\
$$(g, h, x_{2},...,x_{m}) \mapsto (g \cdot(x_{1}' + h), g \cdot x_{2},..., g\cdot x_{m}).$$\\
The map $f$ is clearly surjective. We want to use Fibre Dimension Theorem for this mapping. For that we first need to calculate the fibres. We will only calculate the fibres for points for which $h \in \mathfrak{I}$. This is sufficient, since $\mathfrak{I}$ is open in the Zariski topology by Corollary \ref{i}. We will not calculate the fibres precisely, rather we will prove that fibres contain a set $A$ and are contained in a set $B$, and then we will show that sets $A$ and $B$ are of the same dimension. Denote:\\
\begin{equation*} \mathcal{F} = f^{-1}(g \cdot(x_1'+h),g \cdot x_2,...,g \cdot x_m).\end{equation*}
We begin by analysing when two images are the same:\\
$$(g \cdot (x_{1}' + h), g \cdot x_{2},..., g \cdot x_{m}) = (g'\cdot(x_{1}' + h''), g' \cdot x_{2}'',..., g' \cdot x_{m}'').$$\\
First components of the above $m$-tuples have to be equal. Using the fact that $g \cdot(x_{1}' + h)$ has nilpotent part $g \cdot x_{1}'$ and semisimple part $g \cdot h$ (by Proposition \ref{b}) we get that $g \cdot x_{1}' = g' \cdot x_{1}'$ and $g \cdot h = g' \cdot h''$. The equality $g \cdot x_{1}' = g' \cdot x_{1}'$ is equivalent to  $(g')^{- 1}g \in \mathrm{Stab}_{G}(x_{1}')$ and the equation $g \cdot h = g' \cdot h''$ is equivalent to $(g')^{- 1}g \cdot h = h''$. By Corollary 2.11, we get $(g')^{-1}g \in N_G(L'+H_1)$.\\

We also get $x_i'' =(g')^{-1}g \cdot x_i$ for $2 \leq i \leq m$, so\\
\begin{equation*} \mathcal{F} \subset \{(g',(g')^{-1}g \cdot h, (g')^{-1}g \cdot x_2,...,(g')^{-1}g \cdot x_m): (g')^{-1}g \in \mathrm{Stab}_G(x_1') \cap N_G(H_1+L')\}.\end{equation*}
We denote the set on the right by $B$.\\

The dimension of $B$ is obviously equal to the dimension of $\mathrm{Stab}_{G}(x_{1}') \bigcap N_{G}(L'+H_{1})$, which is equal to\\
\begin{equation*} \dim(C_{L'}(x_1')) + |\Delta| - |\Delta'|,\\ \end{equation*}
by Lemma \ref{kolekcija}(7).\\

Next, we want to prove that\\
\begin{equation}\label{donja} A \subset \mathcal{F}\end{equation}
where\\
\begin{equation*} A = \{(g',(g')^{-1}g \cdot h, (g')^{-1}g \cdot x_2,...,(g')^{-1}g \cdot x_m): (g')^{-1}g \in <\mathrm{Stab}_{G'}(x_1')^{\circ},(\mathrm{Stab}_G(L')\cap \mathrm{Stab}_G(H_1))^{\circ}>\}.\end{equation*}

Let $(g',(g')^{-1}g \cdot h, (g')^{-1}g \cdot x_2,..., (g')^{-1}g \cdot x_m) \in A$ be arbitrary. By the definition of $f$ it is clear that we have to show only that $(g')^{-1}g(H_1) \subset H_1$, that $(g')^{-1}g \cdot x_1'= x_1'$ and that $((g')^{-1}g \cdot x_2,...,(g')^{-1}g \cdot x_m) \in S$. To prove this it is enough to consider the case when $(g')^{-1}g \in \mathrm{Stab}_{G'}(x_1')^{\circ}$ or $(g')^{-1}g \in (\mathrm{Stab}_G(L') \cap \mathrm{Stab}_G(H_1))^{\circ}$. In the second case $(g')^{-1}g$ fixes $L'$ and $H_1$, so the conclusion is obvious. Suppose $(g')^{-1}g \in \mathrm{Stab}_{G'}(x_1')^{\circ}$. As $\mathfrak{Lie}(G') = L'$ and $\mathfrak{Lie}(\mathrm{Stab}_G(H_1)^{\circ}) = C_L(H_1) = L'+H_1$, by Proposition 2.1(6) it follows that $G' \subset \mathrm{Stab}_G(H_1)$ hence $\mathrm{Stab}_{G'}(x_1')^{\circ} \subset \mathrm{Stab}_G(H_1)$. It follows that $(g')^{-1}g(H_1) \subset H_1$ while the equality $(g')^{-1}g \cdot x_1' = x_1'$ is obvious. Finally, as $S'$ is $\mathrm{Stab}_{G'}(x_1')^{\circ}$ -stable, we get $((g')^{-1}g \cdot x_2,..., (g')^{-1}g \cdot x_m) \in S$. This finishes the proof that $A \subset \mathcal{F}$.\\
  
Taking $g' = g$ we see that the point $(g,h,x_{2},..,x_{m})$ belongs to $A$, so the local dimension of the fibre at $(g,h,x_2,...,x_m)$ is larger than or equal to the local dimension of $A$ at that point.\\

It is clear that $\dim_{(g,h,x_2,...,x_m)}A =  \dim<\mathrm{Stab}_{G'}(x_{1}')^{\circ},(\mathrm{Stab}_G(L') \cap \mathrm{Stab}_G(H_1))^{\circ}>$ which is equal to $\dim(C_{L'}(x_{1}')+ H_1)$ (see Proposition \ref{b}(7), Lemma \ref{kolekcija}(8) and note that $C_{L'}(x_1')$ and $H_1$ commute). Hence the local dimension of $A$ at $(g,h,x_2,...,x_m)$ is equal to the dimension of $B$, and to the local dimension of the fibre at $(g,h,x_1,...,x_m)$.\\

Now we use Fibre Dimension Theorem (Theorem \ref{fdt}):\\
\begin{eqnarray*}\dim(C) = \dim(G) + \dim(H_{1}) + \dim(S) - \dim(C_{L'}(x_{1}') + H_{1})=\\
= \dim(G) + \dim(S) - \dim(C_{L'}(x_{1}')).
\end{eqnarray*}\\
By Lemma \ref{Lemma11} we have\\
$$\dim(C') = \dim(G') + \dim(S') - \dim(C_{L'}(x_{1}'))$$\\
and finally we get\\
\begin{eqnarray*} \dim(C) - \dim(C') = \dim(G) - \dim(G') + \dim(S) - \dim(S') =\\
= \dim(G) - \dim(G')+(m-1)\dim(H_1)=\\
= \dim(G) - \dim(G') + (m- 1)(|\Delta| - |\Delta'|).
\end{eqnarray*}\\ \end{proof}

\begin{corollary} Suppose that there exists a simple Lie subalgebra $L'$ of $L$ such that the Dynkin diagram of $L'$ is a subdiagram of the Dynkin diagram of $L$, and such that there exists a component of $C_m(L')$ of dimension at least $\dim(L')+(m-1)\mathrm{rank}(L')$, different from $\mathrm{Reg}_{m,L'}$. Then, $C_m(L)$ is reducible.\\ \end{corollary}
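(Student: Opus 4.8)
The plan is to combine the Theorem on Adding Diagonals (Theorem \ref{maintheorem1}) with Lemma \ref{Reg}: from the given component of $C_m(L')$ I will manufacture an irreducible subvariety $C$ of $C_m(L)$ of dimension at least $\dim(\mathrm{Reg}_{m,L})$ that is not contained in $\mathrm{Reg}_{m,L}$; since $\mathrm{Reg}_{m,L}$ is itself a component by Lemma \ref{Reg}, this forces $C_m(L)$ to be reducible. Fix a connected simple algebraic subgroup $G'$ of $G$ with $\mathfrak{Lie}(G')=L'$, and write $C''$ for the given component of $C_m(L')$, so that $\dim(C'')\geq\dim(L')+(m-1)\mathrm{rank}(L')=\dim(\mathrm{Reg}_{m,L'})$ and $C''\neq\mathrm{Reg}_{m,L'}$. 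Note that some genuine input like $C''$ is unavoidable here, since $C_m(L)$ need not be reducible just because $L$ has a simple proper subalgebra (e.g. $\mathfrak{sl}_2\subset\mathfrak{sl}_3$).

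The first step, which I expect to be the main obstacle, is to reduce to the case where every tuple in $C''$ is nilpotent. The point is that a component of $C_m(L')$ that is different from $\mathrm{Reg}_{m,L'}$ and has dimension at least $\dim(\mathrm{Reg}_{m,L'})$ is automatically of this form, which I would prove by induction on $\mathrm{rank}(L')$. If the generic tuple of $C''$ had a nonzero semisimple part, the span $\mathfrak{t}$ of its semisimple parts would be a nonzero toral subalgebra, $M:=C_{L'}(\mathfrak{t})$ would be a proper reductive subalgebra of $L'$, and $C''$ would be the closure of $G'\cdot\widetilde{C}$ for a component $\widetilde{C}$ of $C_m(M)$; a fibre-dimension count for $G'\times C_m(M)\to C_m(L')$ (generic fibres of dimension $\dim(M)$), combined with $C_m(M)\cong Z(M)^m\times C_m([M,M])$ and $\dim(\mathrm{Reg}_{m,M})=m\dim(Z(M))+\dim(\mathrm{Reg}_{m,[M,M]})$, then yields a component of $C_m(\mathfrak{g})$ for some simple factor $\mathfrak{g}$ of $[M,M]$ which is different from $\mathrm{Reg}_{m,\mathfrak{g}}$, has dimension at least $\dim(\mathrm{Reg}_{m,\mathfrak{g}})$, and has $\mathrm{rank}(\mathfrak{g})<\mathrm{rank}(L')$. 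Iterating leads to $\mathfrak{sl}_2$, whose commuting varieties are irreducible, a contradiction; hence $C''$ consists of nilpotent tuples. The delicate points here are the identification $C''=\overline{G'\cdot\widetilde{C}}$ and the behaviour of semisimple parts in families, which is why I regard this step as the crux.

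So assume now that $C''$ consists of nilpotent tuples. Choose a generic point $(y_1,\ldots,y_m)\in C''$, put $x_1':=y_1$ (nilpotent), and let $S'$ be the irreducible component through $(y_2,\ldots,y_m)$ of the fibre $\{(z_2,\ldots,z_m):(y_1,z_2,\ldots,z_m)\in C''\}$. This fibre lies in $C_{m-1}(C_{L'}(x_1'))$, consists of nilpotent tuples, and is $\mathrm{Stab}_{G'}(x_1')$-stable, so $S'$ is irreducible, $\mathrm{Stab}_{G'}(x_1')^{\circ}$-stable and nilpotent: exactly the data required by Theorem \ref{maintheorem1}. Since $C''$ is nilpotent, the closure of the first projection $p_1(C'')$ is the closure of the nilpotent $G'$-orbit of $y_1$, of dimension $\dim(L')-\dim(C_{L'}(x_1'))$; by the Fibre Dimension Theorem (Theorem \ref{fdt}) we get $\dim(S')=\dim(C'')-\dim(L')+\dim(C_{L'}(x_1'))$, and feeding this into Lemma \ref{Lemma11} gives $\dim(G'\cdot(\{x_1'\}\times S'))=\dim(C'')$. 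Applying Theorem \ref{maintheorem1} with $C'=G'\cdot(\{x_1'\}\times S')$ and $C=G\cdot((x_1'+H_1)\times S)$, and using $\dim(G)=\dim(L)$, $\dim(G')=\dim(L')$, $|\Delta|=\mathrm{rank}(L)$, $|\Delta'|=\mathrm{rank}(L')$, the displayed dimension formula rearranges to $\dim(C)-\dim(\mathrm{Reg}_{m,L})=\dim(C')-\dim(\mathrm{Reg}_{m,L'})=\dim(C'')-\dim(\mathrm{Reg}_{m,L'})\geq 0$, so $\dim(C)\geq\dim(\mathrm{Reg}_{m,L})$.

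Finally I would show $C\not\subseteq\mathrm{Reg}_{m,L}$. The image of $C$ under the first-coordinate projection lies in $\overline{G\cdot(x_1'+H_1)}$, the image of the morphism $G\times H_1\to L$, $(g,h)\mapsto g\cdot(x_1'+h)$. For $h$ in the dense open subset $\mathfrak{I}$ of $H_1$, Proposition \ref{b}(4) (Jordan parts are preserved), Corollary \ref{k}, and Lemma \ref{kolekcija}(2),(7) identify the fibre of this morphism with a translate of $\mathrm{Stab}_G(x_1')\cap N_G(L'+H_1)$, of dimension $\dim(C_{L'}(x_1'))+|\Delta|-|\Delta'|$; with $\dim(H_1)=|\Delta|-|\Delta'|$ (Lemma \ref{d}(3)), the Fibre Dimension Theorem gives $\dim\overline{G\cdot(x_1'+H_1)}=\dim(L)-\dim(C_{L'}(x_1'))<\dim(L)$, since $\dim(C_{L'}(x_1'))\geq\mathrm{rank}(L')\geq 1$. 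On the other hand the first-coordinate projection of $\mathrm{Reg}_{m,L}$ contains the dense subset $G\cdot H$ of $L$, so $C\neq\mathrm{Reg}_{m,L}$; combined with $\dim(C)\geq\dim(\mathrm{Reg}_{m,L})$ and the irreducibility of both, $C$ cannot be a proper subvariety of $\mathrm{Reg}_{m,L}$ either. Hence $C$ lies in a component of $C_m(L)$ distinct from $\mathrm{Reg}_{m,L}$, and $C_m(L)$ is reducible.
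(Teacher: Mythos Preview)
Your overall plan matches the paper's: reduce to a nilpotent component of $C_m(L')$, extract from it the data $x_1',S'$ for Theorem~\ref{maintheorem1}, and check that the resulting $C$ has $\dim(C)\geq\dim(\mathrm{Reg}_{m,L})$ while $\overline{C}\neq\mathrm{Reg}_{m,L}$. The last three steps are carried out correctly, and your first-coordinate projection argument for $\overline{C}\neq\mathrm{Reg}_{m,L}$ is in fact more explicit than the paper's one-line remark.

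The gap is in the nilpotent reduction. Your descent shows only that from a non-nilpotent $C''$ one reaches a smaller simple factor $\mathfrak{g}$ again carrying a non-regular component of $C_m(\mathfrak{g})$ of dimension $\geq\dim(\mathrm{Reg}_{m,\mathfrak{g}})$; to iterate you would need that new component to be non-nilpotent as well, and if it \emph{is} nilpotent the chain halts without ever reaching $\mathfrak{sl}_2$ and you have no contradiction. So ``hence $C''$ consists of nilpotent tuples'' does not follow --- and indeed the assertion can fail: if some simple Levi factor $\mathfrak{g}$ of $L'$ already has a nilpotent component of $C_m(\mathfrak{g})$ strictly larger than $\mathrm{Reg}_{m,\mathfrak{g}}$ (as in Section~4.4 for $\mathfrak{so}_{4s}$ with $s$ large and $m=3$), then padding by the centre of the Levi yields, inside $C_m(L')$, a non-regular irreducible set of dimension $>\dim(\mathrm{Reg}_{m,L'})$ that visibly contains non-nilpotent tuples, hence so does the component containing it. The paper avoids this by first replacing $L'$ with a subdiagram subalgebra of $L$ of \emph{smallest} dimension still satisfying the hypothesis; minimality then forces $\dim C_m(L_i')\leq\dim(\mathrm{Reg}_{m,L_i'})$ for every simple Levi factor $L_i'$, and the same fibre-dimension count now gives $\dim(\mathfrak{C})<\dim(\mathrm{Reg}_{m,L'})$ whenever $\mathfrak{C}$ meets a non-nilpotent tuple --- the contradiction you were after. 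In short, your descent should be used to \emph{replace} $L'$, not to argue about the original $C''$.
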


\begin{proof}  Let $L'$ be a Lie subalgebra of $L$ of the smallest dimension that satisfies the conditions of the corollary. Let $G'$ be a connected simple algebraic subgroup of $G$ such that $\mathfrak{Lie}(G') = L'$. Let $H$ be a Cartan subalgebra of $L$, let $\Phi$ be root system that corresponds to the pair $(L,H)$ and let $\Delta$ be its base. Let $\Delta' \subset \Delta$ correspond to the vertices of the Dynkin diagram of $L'$ and  let $\Phi' = \Phi \cap \mathrm{span}(\Delta')$. Note that $\Phi'$ is irreducible, as $L'$ is simple and its Dynkin diagram is connected. Consider a Chevalley basis of $L$ and define $H' = \mathrm{span}(h_{\alpha}:\alpha \in \Delta')$. Then $H'+\sum_{\alpha \in \Phi'}L_{\alpha}$ is a simple Lie algebra with the same Dynkin diagram as $L'$. As isomorphic Lie algebras yield isomorphic commuting varieties, we may assume that $L'=H'+\sum_{\alpha \in \Phi'}L_{\alpha}$.\\
  
In $C_m(L')$ there is at least one component of dimension at least $\dim(\mathrm{Reg}_{m,L'})$, which is different from $\mathrm{Reg}_{m,L'}$, and we fix one such, and denote it by $\mathfrak{C}$. We first want to prove that $\mathfrak{C}$ contains only nilpotent $m$-tuples. This part of the proof is inspired by proof of \cite[Corollary 4.9]{Ngo}. Suppose that there exists an $m$-tuple $(x_1,...,x_m) \in \mathfrak{C}$ which has one element not nilpotent. Without loss of generality, we may assume that $x_1$ is not nilpotent. It has nonzero semisimple part that we denote by $x_s$. The centralizer $C_{L'}(x_s)$ is reductive Lie algebra by \cite[Lemma 2.1.2]{Col} and it contains $x_1,...,x_m$. Let $\mathfrak{C}_{\mathcal{S}}$ be the set of all nonzero semisimple elements of $L'$ that appear as semisimple parts of elements of $m$-tuples from $\mathfrak{C}$. This set is nonempty by the assumption. Since the set of $m$-tuples containing a non-nilpotent element is open, and $\mathfrak{C}$ is irreducible, the above argument shows that\\
\begin{equation*} \mathfrak{C} \subset \cup_{x \in \mathfrak{C}_{\mathcal{S}}}\overline{(G' \cdot C_m(C_{L'}(x)))}.\end{equation*}
From \cite[Lemma 2.1.2]{Col} it immediately follows that this union is in fact finite, hence by irreducibility of $\mathfrak{C}$ there exists some $x \in \mathfrak{C}_{\mathcal{S}}$ such that\\
\begin{equation*} \mathfrak{C} = \overline{\mathfrak{C} \cap G' \cdot C_m(C_{L'}(x))}.\end{equation*}
Algebra $C_{L'}(x)$ is reductive, hence we can represent\\
\begin{equation*} C_{L'}(x) = Z(C_{L'}(x)) \oplus \oplus_{i=1}^k L'_i\end{equation*}
for some simple Lie algebras $L'_i$. Note that $C_{L'}(x)$ is strictly contained in $L'$, as $x$ is nonzero. Then\\
\begin{equation}\label{triangle} \mathfrak{C} = \overline{G' \cdot (Z(C_{L'}(x))^m \oplus \oplus_{i=1}^kC_m(L'_i))}.\end{equation}
Let $\tilde{\mathfrak{C}}$ be a component of $(Z(C_{L'}(x))^m \oplus \oplus_{i=1}^k C_m(L_i'))$ such that\\
\begin{equation*} \mathfrak{C} = \overline{G' \cdot \tilde{\mathfrak{C}}}.\end{equation*}
By minimality of $L'$ we have\\
\begin{equation*} \dim(\tilde{\mathfrak{C}}) \leq \dim(Z(C_{L'}(x))^m \oplus \oplus_{i=1}^k C_m(L'_i)) \leq \end{equation*}
\begin{equation*} \leq m\dim(Z(C_{L'}(x)))+ \sum_{i=1}^k(\dim(L_i')+(m-1)\mathrm{rank}(L_i')) = \end{equation*}
\begin{equation*} = \dim(C_{L'}(x))+(m-1)\mathrm{rank}(C_{L'}(x)) =\end{equation*} 
\begin{equation*} = \dim(\mathrm{Reg}_{m,C_{L'}(x)}).\end{equation*}
If $\dim(\tilde{\mathfrak{C}}) = \dim(\mathrm{Reg}_{m,C_{L'}(x)})$, then $\dim(C_m(L_i'))=\dim(L_i')+(m-1)\mathrm{rank}(L_i')$ for all $i$, so by minimality of $L'$ we have $\tilde{\mathfrak{C}} = \mathrm{Reg}_{m,C_{L'}(x)}$. From (\ref{triangle}) we can conclude that $\mathfrak{C} \subset \mathrm{Reg}_{m,L'}$ and so $\mathfrak{C}= \mathrm{Reg}_{m,L'}$, but this is contradictory to our assumption. Hence we have\\
\begin{equation}\label{maltese} \dim(\tilde{\mathfrak{C}}) < \dim(\mathrm{Reg}_{m,C_{L'}(x)}) = \dim(C_{L'}(x))+(m-1)\mathrm{rank}(C_{L'}(x)).\end{equation}
Consider the dominant mapping\\
\begin{equation*} f: G' \times \tilde{\mathfrak{C}} \to \mathfrak{C}\end{equation*}
\begin{equation*} (g,(x_1,...,x_m)) \mapsto (g \cdot x_1,...,g \cdot x_m).\end{equation*}
For every $(g,(x_1,...,x_m)) \in G' \times \tilde{\mathfrak{C}}$ we have\\
\begin{equation*} f^{-1}(g \cdot x_1,...,g \cdot x_m) \supset \{(g',x_1',...,x_m'):(g')^{-1}g \in N_{G'}(C_{L'}(x))^{\circ}, x_i' = (g')^{-1}g \cdot x_i, \forall 1 \leq i \leq m\},\end{equation*}
and denote the set on the right by $M$. Note that the connected group $N_{G'}(C_{L'}(x))^{\circ}$ acts on $C_m(C_{L'}(x))$, so all irreducible components of $C_m(C_{L'}(x))$ are invariant under this action, so $M$ is indeed a subset of $G' \times \tilde{\mathfrak{C}}$. We know that\\
\begin{equation*}\dim(M) = \dim(N_{G'}(C_{L'}(x))) = \dim(N_{L'}(C_{L'}(x))) \geq \dim(C_{L'}(x)).\end{equation*}
As $f^{-1}(g \cdot x_1,...,g \cdot x_m) \supset M$, we have\\
\begin{equation*} \dim(f^{-1}(g \cdot x_1,...,g \cdot x_m)) \geq \dim(M) \geq \dim(C_{L'}(x)).\end{equation*}
On the other side, by applying Fibre Dimension Theorem for $f$ and by (\ref{maltese}) we have\\
\begin{equation*}\dim(\mathfrak{C}) = \dim(G' \cdot \tilde{\mathfrak{C}}) = \dim(G')+\dim(\tilde{\mathfrak{C}})-\min(\dim(f^{-1}(g \cdot x_1,...,g \cdot x_m))) <\end{equation*}
\begin{equation*} < \dim(G')+\dim(C_{L'}(x))+(m-1)\mathrm{rank}(C_{L'}(x))-\dim(C_{L'}(x)) \leq \end{equation*}
\begin{equation*} \leq \dim(G')+(m-1) \mathrm{rank}(L')\end{equation*}
which is direct contradiction with the choice of $\mathfrak{C}$. Hence we proved that $\mathfrak{C}$ contains only nilpotent $m$-tuples.\\

For all nilpotent $x_1 \in L'$, we define\\
\begin{equation*} S_{x_1} = \{(x_2,...,x_m): (x_1,x_2,...,x_m) \in \mathfrak{C}\}.\end{equation*}
Then\\
\begin{equation*} \mathfrak{C}= \cup_{x_1} \overline{G' \cdot (\{x_1\} \times S_{x_1})}\end{equation*}
and this union is finite since $L'$ has finitely many nilpotent orbits. Therefore, there exists $x_1$ such that\\
\begin{equation*} \mathfrak{C} = \overline{G' \cdot (\{x_1\} \times S_{x_1})}.\end{equation*}
$S_{x_1} \subset C_{m-1}(C_{L'}(x_1))$ and contains only nilpotent elements. As $\mathfrak{C}$ is irreducible, there exists a component $\tilde{S}$ of $S_{x_1}$ such that\\
\begin{equation*} \mathfrak{C} = \overline{G' \cdot (\{x_1\} \times \tilde{S})}.\end{equation*}
As $\mathfrak{C}$ is a component, conjugating with $\mathrm{Stab}_{G'}(x_1')^{\circ}$ preserves $S_{x_1}$, and hence it preserves its irreducible component $\tilde{S}$. Now use Theorem \ref{maintheorem1} and note that the closure of the set $C$ obtained in Theorem 3.1 is not the regular component of $C_m(L)$.\end{proof}

For the next theorem we recall that the $T$-space associated to the point  $(x_{1,0},...,x_{m,0}) \in C_m(L)$ is given by (\ref{tspace}) and denoted by $T_{(x_{1,0},...,x_{m,0})}(C_m(L))$.\\

\begin{theorem}\label{maintheorem2}[\textbf{Theorem on adding diagonals, second part}] Assume that in Theorem \ref{maintheorem1} there additionally exists $(x_{2,0}',...,x_{m,0}') \in S'$ such that $\dim(C') = \dim(T_{(x_1',x_{2,0}',...,x_{m,0}')}C_m(L'))$, and let $h \in \mathfrak{I}$. Then\\
\begin{equation*} \dim(T_{(x_1'+h,x_{2,0}',...,x_{m,0}')}C_m(L))=\dim(C')+\dim(G)-\dim(G')+(m-1)(|\Delta|-|\Delta'|),\end{equation*}
and $(x_1'+h,x_{2,0}',...,x_{m,0}')$ is a smooth point of $C$, which is a non-regular component of $C_m(L)$. \end{theorem}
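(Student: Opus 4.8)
The plan is to reduce everything to the single inequality $\dim T_{p_0}(C_m(L))\le\dim C$, where $p_0=(x_1'+h,x_{2,0}',\dots,x_{m,0}')$. First note that $p_0\in C$ (take $g=1$ and $h_2=\dots=h_m=0$ in the definition of $C$), so $\overline C$ is an irreducible subvariety of $C_m(L)$ through $p_0$ whose dimension is the number given by Theorem \ref{maintheorem1}. Since the local dimension of a variety at a point is at most the dimension of its tangent space there, and the tangent space is contained in the $T$-space (Lemma \ref{tangentspace}),
\[\dim C=\dim\overline C\le\dim_{p_0}C_m(L)\le\dim(\text{tangent space at }p_0)\le\dim T_{p_0}(C_m(L)).\]
Hence, once $\dim T_{p_0}(C_m(L))\le\dim C$ is proved, all these inequalities are equalities: this is the formula claimed for $\dim T_{p_0}(C_m(L))$, it shows that $p_0$ is a smooth point of $C_m(L)$, and it shows that the irreducible set $\overline C$, having dimension $\dim_{p_0}C_m(L)$ and containing the smooth point $p_0$, is the unique irreducible component of $C_m(L)$ through $p_0$.

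To bound the $T$-space I would decompose $L$ by means of $h$. Write $y_1=x_1'+h$, which has semisimple part $h$ and nilpotent part $x_1'$, and $y_i=x_{i,0}'$ for $i\ge2$; all $y_i$ lie in $M_0:=C_L(h)=H_1\oplus L'$. Put $M_1=\bigoplus_{\beta\in\Phi\setminus\Phi'}L_\beta$, so $L=M_0\oplus M_1$. A check on root spaces shows that $M_0$ is a subalgebra and $[M_0,M_1]\subseteq M_1$ (the point being that $\alpha\in\Phi'$ and $\beta\in\Phi\setminus\Phi'$ force $\alpha+\beta\notin\Phi'$). Consequently, writing a candidate tangent vector as $z_i=u_i+v_i$ with $u_i\in M_0$ and $v_i\in M_1$, each relation $[y_i,z_j]=[y_j,z_i]$ splits into its $M_0$-component and its $M_1$-component, and therefore $T_{p_0}(C_m(L))=\mathcal T_0\oplus\mathcal T_1$ with $\mathcal T_\varepsilon=\{(a_1,\dots,a_m)\in M_\varepsilon^m:[y_i,a_j]=[y_j,a_i]\ \forall i,j\}$.

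For $\mathcal T_0$, write $u_i=t_i+w_i$ with $t_i\in H_1$ and $w_i\in L'$. Since $H_1$ centralizes $L'$ and $h$ centralizes $L'$, each $[y_i,u_j]$ equals $[y_i',w_j]$, where $y_1'=x_1'$ and $y_i'=x_{i,0}'$ for $i\ge2$; so the $t_i$'s are unconstrained and the remaining relations say precisely that $(w_1,\dots,w_m)\in T_{(x_1',x_{2,0}',\dots,x_{m,0}')}(C_m(L'))$. Thus $\mathcal T_0=H_1^m\oplus T_{(x_1',x_{2,0}',\dots,x_{m,0}')}(C_m(L'))$, and by Lemma \ref{d}(3) together with the additional hypothesis, $\dim\mathcal T_0=m(|\Delta|-|\Delta'|)+\dim C'$. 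For $\mathcal T_1$, the key observation is that $\mathrm{ad}(y_1)$ restricts to an invertible operator on $M_1$: $\mathrm{ad}(h)|_{M_1}$ is invertible because its eigenvalues are the scalars $\beta(h)$ with $\beta\in\Phi\setminus\Phi'$, all nonzero since $h\in\mathfrak I$; $\mathrm{ad}(x_1')|_{M_1}$ is nilpotent; they commute because $[h,x_1']=0$; and a commuting sum of an invertible and a nilpotent operator is invertible. Hence the relations with $i=1$ determine $v_2,\dots,v_m$ uniquely from $v_1$, the first projection embeds $\mathcal T_1$ into $M_1$, and $\dim\mathcal T_1\le\dim M_1=|\Phi|-|\Phi'|$. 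Adding the two bounds and using $\dim G-\dim G'=\dim L-\dim L'=(|\Delta|-|\Delta'|)+(|\Phi|-|\Phi'|)$ gives $\dim T_{p_0}(C_m(L))\le\dim C'+m(|\Delta|-|\Delta'|)+(|\Phi|-|\Phi'|)=\dim C$ by Theorem \ref{maintheorem1}, as required. Finally $\overline C\ne\mathrm{Reg}_{m,L}$: since $\dim\mathrm{Reg}_{m,L}=\dim L+(m-1)\mathrm{rank}(L)$ by Lemma \ref{Reg}, the dimension formula forces $\overline C=\mathrm{Reg}_{m,L}$ only if $\dim C'=\dim\mathrm{Reg}_{m,L'}$, which is excluded in the situations in which the theorem is applied (where $C'$ is a non-regular component of $C_m(L')$), exactly as noted at the end of the preceding proof.

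The step I expect to be the main obstacle is checking carefully that the relations $[y_i,z_j]=[y_j,z_i]$ genuinely decouple along $L=M_0\oplus M_1$ and that in the $M_0$-block the $H_1$-coordinates split off freely while the $L'$-coordinates reproduce the $T$-space of $C_m(L')$ exactly — this is where all the structural facts about $L'$, $H_1$ and $\mathfrak I$ from Section 2 are used. The invertibility of $\mathrm{ad}(y_1)$ on $M_1$, which relies on $h\in\mathfrak I$ so that $\mathrm{ad}(h)$ has no zero eigenvalue on $M_1$, is the other essential input but it is short; the rest is dimension arithmetic.
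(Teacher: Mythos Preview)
Your argument is correct and follows the same overall architecture as the paper's proof: reduce to the inequality $\dim T_{p_0}(C_m(L))\le\dim C$, split $L=M_0\oplus M_1$ with $M_0=H_1\oplus L'$ and $M_1=\bigoplus_{\beta\in\Phi\setminus\Phi'}L_\beta$, identify the $M_0$-block with $H_1^m\oplus T_{(x_1',x_{2,0}',\dots,x_{m,0}')}(C_m(L'))$, and bound the $M_1$-block by $\dim M_1$. The paper does exactly this decomposition (writing $z_i=h_i'+h_i''+z_i'+z_i''$) and obtains the same identification of the $M_0$-part.

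The genuine difference is in how the $M_1$-block is handled. The paper first invokes Lemma~\ref{Borel} to conjugate so that $x_1',x_{2,0}',\dots,x_{m,0}'$ lie in the span of positive root vectors, then expands the $i=1$ equations in root coordinates and observes that they are \emph{triangular} with respect to the partial order $\prec$ on $\Phi\setminus\Phi'$: each $a_{\beta_1,i}$ is expressed via coefficients $a_{\phi,i}$ with $\phi\prec\beta_1$, yielding $(m-1)|\Phi\setminus\Phi'|$ independent linear conditions. Your route bypasses the Borel reduction entirely: since $h\in\mathfrak I$, $\mathrm{ad}(h)|_{M_1}$ is invertible, $\mathrm{ad}(x_1')|_{M_1}$ is nilpotent, and they commute, so $\mathrm{ad}(x_1'+h)|_{M_1}$ is invertible and $v_j=\mathrm{ad}(x_1'+h)^{-1}[y_j,v_1]$ for $j\ge 2$. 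This is shorter and conceptually cleaner; the paper's explicit recursion, on the other hand, displays the solution concretely and does not require the nilpotent-plus-invertible observation. Both yield the same bound $\dim\mathcal T_1\le|\Phi|-|\Phi'|$.

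Your treatment of the ``non-regular'' clause matches the paper's: neither proof establishes it in full generality inside Theorem~\ref{maintheorem2}; it is secured in the applications (Corollary~\ref{fin} and Section~4) via the hypothesis that $C'$ consists of nilpotent tuples.
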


\begin{proof} We want to prove that\\
\begin{equation}\label{t3pprva} \dim(T_{(x_1'+h,x_{2,0}',...,x_{m,0}')}C_m(L)) \leq \dim(T_{(x_1',x_{2,0}',...,x_{m,0}')}C_m(L'))+\dim(G)-\dim(G')+(m-1) (|\Delta|-|\Delta'|).\end{equation}
Suppose this is true. As $C \subset C_m(L)$ and by Lemma 2.21 we have\\
\begin{equation}\label{t3druga} \dim(C) \leq \dim_{(x_1'+h,x_{2,0}',...,x_{m,0}')}C_m(L) \leq \dim(T_{(x_1'+h,x_{2,0}',...,x_{m,0}')}C_m(L)),\end{equation}
so by (\ref{t3pprva}) we have\\
\begin{equation}\label{t3treca} \dim(C) \leq \dim(T_{(x_1',x_{2,0}',...,x_{m,0}')}C_m(L'))+\dim(G)-\dim(G')+(m-1)(|\Delta|-|\Delta'|).\end{equation}
The assumption of our theorem then implies that\\
\begin{equation}\label{t3cetvrta} \dim(C) \leq \dim(C')+\dim(G)-\dim(G')+(m-1)(|\Delta|-|\Delta'|).\end{equation}
But by Theorem \ref{maintheorem1} the inequality (\ref{t3cetvrta}) is actually an equality, hence (\ref{t3pprva})-(\ref{t3treca}) are equalities, ending the proof.\\

Now we prove (\ref{t3pprva}). Because of Lemma \ref{Borel}, and since all Borel subalgebras are conjugate, we can assume that $x_1',x_{2,0}',...,x_{m,0}'$ are linear combinations of elements of Chevalley basis that correspond to positive roots. We are searching for $T$-space, i.e. for $(z_1,...,z_m) \in L^m$ such that:\\
\begin{equation}\label{t3peta}[x_1'+h,z_i] = [x_{i,0}',z_1]\ (\forall\ 2 \leq i \leq m)\end{equation}
and\\
\begin{equation}\label{t3sesta}[x_{i,0}',z_j]=[x_{j,0}',z_i]\ (\forall\ 2 \leq i,j \leq m).\end{equation}
For  all $1 \leq i \leq m$, we represent:\\
\begin{equation}\label{t3sedma} z_i = h_i'+h_i''+z_i'+z_i''\end{equation}
where $h_i' \in H',h_i'' \in H_1,z_i' \in \oplus_{\alpha \in \Phi'}L_{\alpha},z_i'' \in \oplus_{\alpha \in \Phi \setminus \Phi'}L_{\alpha}$. Using the fact that $H_1 = C_H(L')$, the equations (\ref{t3peta}) and (\ref{t3sesta}) are equivalent to\\
\begin{equation}\label{t3osma}[x_1',h_i']+[x_1',z_i']+[x_1',z_i'']+[h,z_i'']=[x_{i,0}',h_1']+[x_{i,0}',z_1']+[x_{i,0}',z_1'']\end{equation}
and\\
\begin{equation}\label{t3deveta}[x_{i,0}',h_j']+[x_{i,0}',z_j']+[x_{i,0}',z_j'']=[x_{j,0}',h_i']+[x_{j,0}',z_i']+[x_{j,0}',z_i''].\end{equation}
The inequality (\ref{t3pprva}) that we are proving estimates the difference between dimensions of the $T$-space to $C_m(L)$ associated to the point $(x_1'+h,x_{2,0}',...,x_{m,0}')$ and the $T$-space to $C_m(L')$ associated to the point $(x_1',x_{2,0}',...,x_{m,0}')$. If $(z_1'+h_1',...,z_m'+h_m') \in (L')^m$ are as above, the equations of the $T$-space to $C_m(L')$ associated to the point $(x_1',x_{2,0}',...,x_{m,0}')$ are\\
\begin{equation}\label{t3deseta}[x_1',z_i'+h_i']=[x_{i,0}',z_1'+h_1'],\ 2\leq i\leq m,\end{equation}
\begin{equation}\label{t3jedanaesta}[x_{i,0}',z_j'+h_j']=[x_{j,0}',z_i'+h_i'],\ 2 \leq i,j \leq m.\end{equation}
Note that $T_{(x_1+h,x_{2,0}',...,x_{m,0}')}C_m(L)$ is a direct sum of $T_{(x_1,x_{2,0}',...,x_{m,0}')}C_m(L')$ and the vector space $V$ of all $m$-tuples $(h_1'' + z_1'',...,h_m''+z_m'') \in (H_1 \oplus \oplus_{\alpha \in \Phi \setminus \Phi'}L_{\alpha})^m$ which satisfy\\
\begin{equation}\label{t3dvanaesta}[x_1',z_i'']+[h,z_i'']=[x_{i,0}',z_1''],\ 2\leq i \leq m \end{equation}
and
\begin{equation}\label{t3trinaesta}[x_{i,0}',z_j'']=[x_{j,0}',z_i''],\ 2 \leq i,j \leq m.\end{equation}
The difference between dimensions of $T$-spaces is therefore equal to the dimension of $V$. We analyse equation (\ref{t3dvanaesta}). Recalling that $x_1',x_{2,0}',...,x_{m,0}' \in \oplus_{\beta \in (\Phi')^+}L_{\beta}$ we represent:\\
\begin{equation*} x_1' = \sum_{\beta \in (\Phi')^+}b_{\beta,1}x_{\beta}, \end{equation*}
\begin{equation*} z_i'' = \sum_{\alpha \in \Phi \setminus \Phi'}a_{\alpha,i}x_{\alpha},\ 1\leq i \leq m,\end{equation*}
\begin{equation*} x_{i,0}' = \sum_{\beta \in (\Phi')^+}b_{\beta,i}x_{\beta},\ 2 \leq i \leq m.\end{equation*}
Take $\beta \in \Phi',\alpha \in \Phi \setminus \Phi'$. Recall that $[x_{\beta},x_{\alpha}] = R_{\beta,\alpha}x_{\alpha+\beta}$ if $\alpha + \beta \in \Phi$, $[x_{\beta},x_{\alpha}] = 0$ if $\alpha + \beta \notin \Phi$, and that $[h,x_{\alpha}] = \alpha(h)x_{\alpha}$. The equation (\ref{t3dvanaesta}) is equivalent to\\
\begin{equation*}\sum_{\alpha \in \Phi \setminus \Phi'}\left(\sum_{\beta \in (\Phi')^+\atop \alpha+\beta \in \Phi}(a_{\alpha,i}b_{\beta,1}-a_{\alpha,1} b_{\beta,i})R_{\beta,\alpha}x_{\alpha+\beta}+a_{\alpha,i}\alpha(h)x_{\alpha}\right) = 0.\end{equation*}
In the last equation we do relabeling and write it as\\
\begin{equation*}\sum_{\beta_1 \in \Phi \setminus \Phi'}\left(\sum_{\beta \in (\Phi')^+\atop \beta_1-\beta \in \Phi \setminus \Phi'}(a_{\beta_1-\beta,i}b_{\beta,1}-a_{\beta_1-\beta,1}b_{\beta,i}) R_{\beta,\beta_1-\beta}+a_{\beta_1,i}\beta_1(h)\right)x_{\beta_1} = 0.\end{equation*}
As $x_{\beta_1}$ are elements of the Chevalley basis, we have\\
\begin{equation}\label{t3cetrnaesta}\sum_{\beta \in (\Phi')^+ \atop \beta_1-\beta \in \Phi \setminus \Phi'}(a_{\beta_1-\beta,i}b_{\beta,1}- a_{\beta_1-\beta,1}b_{\beta,i})R_{\beta,\beta_1-\beta}+a_{\beta_1,i}\beta_1(h) = 0,\end{equation}
for all $\beta_1 \in \Phi \setminus \Phi'$. Now, we recall that $h \in \mathfrak{I}$, hence $[h,x_{\beta_1}] \neq 0$ for all $\beta_1 \in \Phi \setminus \Phi'$. Hence, equation (\ref{t3cetrnaesta}) yields:\\
\begin{equation}\label{t3petnaesta}a_{\beta_1,i} = \frac{-\sum_{\beta \in (\Phi')^+\atop \beta_1-\beta \in \Phi \setminus \Phi'}(a_{\beta_1-\beta,i} b_{\beta,1}-a_{\beta_1-\beta,1}b_{\beta,i})R_{\beta,\beta_1-\beta}}{\beta_1(h)},\end{equation}
for all $\beta_1 \in \Phi \setminus \Phi'$, and all $i=2,...,m$. Recall that we have partial ordering on $\Phi$, given by $\alpha \prec \beta$ if and only if $\beta - \alpha$ is a positive root. Note that in the equations (\ref{t3petnaesta}) we expressed coefficient $a_{\beta_1,i}$ via the coefficients $a_{\phi,i}$ for $\phi \prec \beta_1$. From here, it is clear that these are linearly independent conditions, hence we have $(m-1)(|\Phi \setminus \Phi'|)$ conditions. The space of solutions of the system (18) has therefore dimension\\
\begin{equation*} m(|\Phi \setminus \Phi'|+|\Delta \setminus \Delta'|)-(m-1)(|\Phi \setminus \Phi'|)=\end{equation*}
\begin{equation*} = \dim(G)-\dim(G')+(m-1)(|\Delta|-|\Delta'|).\end{equation*}
 As equations (\ref{t3trinaesta}) could only lower the dimension of the space of solutions, we obtain\\
\begin{equation*} \dim(T_{(x_1'+h,x_{2,0}',...,x_{m,0}')}C_m(L))-\dim(T_{(x_1',x_{2,0}',...,x_{m,0}')}C_m(L')) \leq \dim(G)-\dim(G')+(m-1) (|\Delta|-|\Delta'|),\end{equation*}
ending the proof.\end{proof}

By the same argument as in the last part of Corollary 3.3 and using Theorem 3.4 instead of Theorem 3.1 we show:\\

\begin{corollary}\label{fin} If $L'$ is a simple Lie subalgebra of $L$ such that the Dynkin diagram of $L'$ is a subdiagram of the Dynkin diagram of $L$ and $C'$ is a component of $C_m(L')$ containing only nilpotent $m$-tuples, and $\dim(T_{(x_1',...,x_m')}C_m(L'))= \dim(C')$ for some point $(x_1',...,x_m') \in C'$, then $C_m(L)$ is reducible.\\\end{corollary}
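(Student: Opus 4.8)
The plan is to mimic the final paragraph of Corollary~\ref{corollary} (the corollary labeled ``Suppose that there exists a simple Lie subalgebra\ldots''), replacing the use of Theorem~\ref{maintheorem1} by Theorem~\ref{maintheorem2}, so that the resulting subvariety $C$ of $C_m(L)$ is not merely large but is actually an irreducible component, witnessed by a smooth point. The key observation is that Theorem~\ref{maintheorem2} requires exactly the data provided in the hypothesis: a component $C'$ of $C_m(L')$ consisting of nilpotent $m$-tuples together with a point where the $T$-space of $C_m(L')$ has dimension equal to $\dim(C')$.

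First I would set up the notation of Section~2.1 as in the proof of Corollary~\ref{corollary}: fix a Cartan subalgebra $H$ of $L$, a root system $\Phi$, a base $\Delta$, let $\Delta' \subset \Delta$ be the subset of simple roots corresponding to the Dynkin subdiagram of $L'$, put $\Phi' = \Phi \cap \mathrm{span}(\Delta')$, and (using that isomorphic Lie algebras give isomorphic commuting varieties) assume $L' = H' + \sum_{\alpha \in \Phi'} L_\alpha$ with $H' = \mathrm{span}(h_\alpha : \alpha \in \Delta')$. Choose a connected simple algebraic subgroup $G'$ of $G$ with $\mathfrak{Lie}(G') = L'$. Since $\Phi'$ is irreducible (the subdiagram is connected because $L'$ is simple), $L'$ is simple, so all the machinery of Section~2.2 applies.

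Next, I would extract from the component $C'$ the structure required to invoke Theorem~\ref{maintheorem1}/\ref{maintheorem2}. By the argument in Corollary~\ref{corollary}, writing $S_{x_1} = \{(x_2,\ldots,x_m) : (x_1,x_2,\ldots,x_m) \in C'\}$ for each nilpotent $x_1 \in L'$, the decomposition $C' = \bigcup_{x_1} \overline{G' \cdot (\{x_1\} \times S_{x_1})}$ is a finite union (finitely many nilpotent orbits), so by irreducibility of $C'$ there is a single nilpotent $x_1'$ and an irreducible component $S'$ of $S_{x_1'} \subset C_{m-1}(C_{L'}(x_1'))$ with $C' = \overline{G' \cdot (\{x_1'\} \times S')}$; and since $C'$ is a component, conjugation by $\mathrm{Stab}_{G'}(x_1')^\circ$ preserves $S_{x_1'}$ and hence its component $S'$, so $S'$ is $\mathrm{Stab}_{G'}(x_1')^\circ$-stable, and it consists of nilpotent tuples because $C'$ does. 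Then I would invoke the hypothesis: there is a point $(x_1', x_{2,0}', \ldots, x_{m,0}') \in C'$ with $\dim(T_{(x_1',x_{2,0}',\ldots,x_{m,0}')}C_m(L')) = \dim(C')$; I should note (this is a small point to check) that the $G'$-orbit argument lets me arrange $(x_{2,0}',\ldots,x_{m,0}') \in S'$, possibly after applying an element of $G'$ and using that the $T$-space dimension is constant on $G$-orbits.

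Finally, with all hypotheses of Theorem~\ref{maintheorem2} verified, pick any $h \in \mathfrak{I}$ (nonempty by Lemma~\ref{f}); the theorem then directly yields that $\overline{C}$, with $C = G \cdot ((x_1' + H_1) \times S)$ and $S$ as in Theorem~\ref{maintheorem1}, is a non-regular component of $C_m(L)$, hence $C_m(L)$ has at least two components (this one and $\mathrm{Reg}_{m,L}$) and is reducible. I expect the main obstacle to be purely expository: making precise that the chosen point realizing the equality of $T$-space dimension and component dimension can be taken inside $S'$ with first coordinate the specific nilpotent $x_1'$ produced by the orbit-counting argument --- essentially one needs that the stratification of $C'$ by nilpotent $G'$-orbit of the first coordinate is compatible with the locus where the $T$-space dimension drops, which follows since $(x_1',x_{2,0}',\ldots,x_{m,0}')$ generates a dense subset of $C'$ under $G' \cdot$ for the relevant $x_1'$, but deserves a careful sentence. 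Everything else is a direct citation of Theorem~\ref{maintheorem2} and a repetition of the bookkeeping already carried out in Corollary~\ref{corollary}.
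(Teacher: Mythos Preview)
Your proposal is correct and follows exactly the approach the paper takes: the paper's entire proof is the single sentence ``By the same argument as in the last part of Corollary 3.3 and using Theorem 3.4 instead of Theorem 3.1 we show,'' which is precisely what you have unpacked. Your additional care about arranging the smooth point to lie in $\{x_1'\}\times S'$ (via upper semicontinuity of the $T$-space dimension, density of $G'\cdot(\{x_1'\}\times S')$ in $C'$, and $G'$-invariance of the $T$-space dimension) is a detail the paper leaves implicit, and your sketch of its resolution is sound.
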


\section{Application of Theorem on Adding Diagonals to reducibility of $C_m(L)$}

In this section we prove Theorem 1.1, using Theorem on Adding Diagonals. The proof is split in various sections, depending on $m$ and the type of $L$.\\

\subsection{Reducibility of $C_m(L)$ for $m \geq 4$ and $L$ of types $A,D,E$}

This was done in \cite{Guralnick} for $\mathfrak{gl_l}$, while our approach works also for types $D$ and $E$. We will prove reducibility of commuting varieties by finding a set $S'$ satisfying the assumptions of Theorem 3.4. We first consider $C_4(\mathfrak{sl}_4)$ and compute the $T$-space to Guralnick's example \cite{Guralnick}. Our setting slightly differs from the one found in literature \cite{Guralnick,Jel} not only because we consider $\mathfrak{sl}_4$ instead of $\mathfrak{gl}_4$, but also because our smooth point is different, as we want the centralizer $C(x_1)$ to be as small as possible for $S'$ to be $\mathrm{Stab}_{\mathrm{SL}_4}(x_1)$-stable. Therefore we provide the proof for the sake of completeness.\\ 

We take\\
\[x_1 = \left[ \begin{array}{cccc}
0 & 0 & 1 & 0\\
0 & 0 & 0 & 1\\
0 & 0 & 0 & 0\\
0 & 0 & 0 & 0
\end{array}
\right],
x_2 = \left[ \begin{array}{cccc}
0 & 0 & 0 & 1\\
0 & 0 & 0 & 0\\
0 & 0 & 0 & 0\\
0 & 0 & 0 & 0
\end{array}
\right],
x_3 = \left[\begin{array}{cccc}
0 & 0 & 0 & 0\\
0 & 0 & 1 & 0\\
0 & 0 & 0 & 0\\
0 & 0 & 0 & 0
\end{array}
\right],
x_4 = \left[ \begin{array}{cccc}
0 & 0 & 0 & 0\\
0 & 0 & 0 & 1\\
0 & 0 & 0 & 0\\
0 & 0 & 0 & 0
\end{array}
\right].
\]\\

The $T$-space to $C_4(\mathfrak{sl}_4)$ at the point $(x_1,x_2,x_3,x_4)$ is given by\\
\[ \left(\left[ \begin{array}{cccc}
a_1 & c_6 & a_3 & a_4\\
b_1 & -c_{11} & a_7 & a_8\\ 
0 & 0 & -a_1 & -c_6\\
0 & 0 & -b_1 & c_{11}
\end{array} \right], \left[ \begin{array}{cccc}
b_1 & -c_{11} & b_3 & b_4\\
0 & 0 & b_7 & b_8\\
0 & 0 & 0 & -a_1\\
0 & 0 & 0 & -b_1
\end{array} \right], \left[ \begin{array}{cccc}
0 & 0 & c_3 & c_4\\
a_1 & c_6 & c_7 & c_8\\
0 & 0 & -c_6 & 0\\
0 & 0 & c_{11} & 0
\end{array} \right], \left[
\begin{array}{cccc}
0 & 0 & d_3 & d_4\\
b_1 & -c_{11} & d_7 & d_8\\
0 & 0 & 0 & -c_6\\
0 & 0 & 0 & c_{11}
\end{array}\right]
\right),\]\\
where $\{a_1,c_6,a_3,a_4,b_1,c_{11},a_7,a_8,b_3,b_4,b_7,b_8,c_3,c_4,c_7,c_8,d_3,d_4,d_7,d_8\} \subset F$ are free parameters.\\

Next, we observe that for $m \geq 4$ the $T$-space $T_{(x_1,x_2,x_3,x_4,0,...,0)}C_m(\mathfrak{sl}_4)$ consists of all $m$-tuples $(z_1,...,z_m)$ where $(z_1,...,z_4) \in T_{(x_1,x_2,x_3,x_4)}C_4(\mathfrak{sl}_4)$ and $z_j \in \cap_{i=1}^4C(x_i)$ for $j \geq 5$.\\

An easy computation shows that $z \in \cap_{i=1}^4 C(x_i)$ is of the form\\
$$z = \begin{bmatrix}
0 & 0 & \alpha_{1} & \alpha_{2}\\
0 & 0 & \alpha_{3} & \alpha_{4}\\
0 & 0 & 0 & 0\\
0 & 0 & 0 & 0
\end{bmatrix}$$\\
so the dimension of the $T$-space to $C_m(\mathfrak{sl}_4)$ associated to the point $(x_1,x_2,x_3,x_4,0,...,0)$ is\\
\begin{equation*} \dim(T_{(x_1,x_2,x_3,x_4,0,...,0)}C_m(\mathfrak{sl}_4)) = 20+4(m-4) = 4m+4.\\ \end{equation*} 

Let\\
\[ S' = \left\{ \left( \left[\begin{array}{cccc}
0 & 0 & \alpha_{2, 1} & \alpha_{2, 2}\\
0 & 0 & \alpha_{2, 3} & \alpha_{2, 4}\\
0 & 0 & 0 & 0\\
0 & 0 & 0 & 0
\end{array}
\right], ....,
\left[ \begin{array}{cccc}
0 & 0 & \alpha_{m,1} & \alpha_{m,2}\\
0 & 0 & \alpha_{m,3} & \alpha_{m,4}\\
0 & 0 & 0 & 0\\
0 & 0 & 0 & 0
\end{array}
\right]\right) : \alpha_{i,1},\alpha_{i,2},\alpha_{i,3},\alpha_{i,4} \in F \right\}.
\]
\\
It is easy to check that $S'$ is $Stab_{SL_4}(x_1)$-stable, irreducible and of dimension $4m-4$.\\

Let $C' = SL_4 \cdot (\{x_1\} \times S\} \subset C_m(\mathfrak{sl}_4)$. The dimension of $C_{\mathfrak{sl}_4}(x_1)$ is 7, so it follows by Lemma \ref{Lemma11} that\\
$$\dim(C') =4m + 4.$$\\
\\

We see that $\dim(C')$ is equal to the dimension of the $T$-space to $C_m(\mathfrak{sl}_4)$ associated to the point $(x_1,...,x_n,0,...,0)$ so $\overline{C'}$ is an irreducible component. Moreover, Corollary \ref{fin} implies:\\

\begin{theorem}The variety $C_{m}(L)$ is reducible for $m \geq 4$ and $L$ being a simple Lie algebra of types $A, D, E_{6}, E_7, E_{8}.$\\ \end{theorem}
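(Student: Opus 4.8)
The plan is to deduce the theorem from Corollary \ref{fin} with $L' = \mathfrak{sl}_4$, the simple Lie algebra of type $A_3$, using the data about $C_m(\mathfrak{sl}_4)$ already assembled above. Concretely, the set $C' = SL_4 \cdot (\{x_1\} \times S')$ consists only of nilpotent $m$-tuples (the $x_i$ and the matrices occurring in $S'$ are nilpotent and strictly upper triangular with respect to the standard flag, and the adjoint action of $SL_4$ preserves nilpotency); it has dimension $4m+4$ by Lemma \ref{Lemma11} together with $\dim C_{\mathfrak{sl}_4}(x_1) = 7$; and the $T$-space to $C_m(\mathfrak{sl}_4)$ at the point $(x_1,x_2,x_3,x_4,0,\dots,0)\in C'$ also has dimension $4m+4$. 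Since $\dim(C')\le \dim_{p} C_m(\mathfrak{sl}_4)\le \dim(\text{tangent space at }p)\le \dim T_{p}(C_m(\mathfrak{sl}_4))$ (the last inequality by Lemma \ref{tangentspace}), all four numbers equal $4m+4$, so $p$ is a smooth point and $\overline{C'}$ is an irreducible component of $C_m(\mathfrak{sl}_4)$. Thus $\overline{C'}$ is a component of $C_m(\mathfrak{sl}_4)$ containing only nilpotent $m$-tuples and carrying a point where the $T$-space has dimension $\dim(C')$ — exactly the hypothesis of Corollary \ref{fin}.

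It then remains only to observe that $\mathfrak{sl}_4$ occurs as the required kind of subalgebra. If $L$ is simple of type $A_l$ with $l\ge 3$ (that is, $\mathfrak{sl}_l$ with $l\ge 4$), of type $D_l$ with $l\ge 4$, or of type $E_6$, $E_7$, $E_8$, then all bonds of its Dynkin diagram are single and the diagram, being connected with at least three nodes, contains a path on three nodes, i.e.\ a subdiagram of type $A_3$ (for $D_4$ take $\alpha_1-\alpha_2-\alpha_3$ with $\alpha_2$ trivalent; in the remaining cases take any three consecutive nodes of a chain). Hence $L$ has a simple subalgebra isomorphic to $\mathfrak{sl}_4$ whose Dynkin diagram is that subdiagram, and Corollary \ref{fin} gives that $C_m(L)$ is reducible. (For $L=\mathfrak{sl}_4$ one can argue directly instead: $\overline{C'}$ is a component distinct from $\mathrm{Reg}_{m,\mathfrak{sl}_4}$, since nilpotency is a closed condition whereas the generic element of $\mathrm{Reg}_{m,\mathfrak{sl}_4}$ has regular semisimple entries.)

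The real content — and the step I expect to be the main obstacle — is the linear-algebra verification behind the claims about $C_m(\mathfrak{sl}_4)$: that the displayed $20$-parameter family is precisely the solution space of the system $[x_i,z_j]+[x_j,z_i]=0$ within the trace-zero matrices for $i,j\in\{1,2,3,4\}$, and that passing from $C_4(\mathfrak{sl}_4)$ to $C_m(\mathfrak{sl}_4)$ with $x_{5,0}=\dots=x_{m,0}=0$ decouples the additional equations into $[x_i,z_j]=0$ $(1\le i\le 4)$, so each of the slots $z_5,\dots,z_m$ contributes exactly $\dim\bigcap_{i=1}^{4}C_{\mathfrak{sl}_4}(x_i)=4$. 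One also has to check that $S'$ is $\mathrm{Stab}_{SL_4}(x_1)$-stable and irreducible (it is a linear subspace, cut out by conditions on subspaces of $F^4$ that $\mathrm{Stab}_{SL_4}(x_1)$ preserves) and that $\dim C_{\mathfrak{sl}_4}(x_1)=7$ (the nilpotent $x_1$ has Jordan type $(2,2)$). These computations are routine but somewhat lengthy; once they are in place, the theorem is a formal consequence of Lemma \ref{Lemma11}, Lemma \ref{tangentspace}, Theorem \ref{maintheorem2} and Corollary \ref{fin}.
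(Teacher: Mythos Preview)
Your proposal is correct and follows essentially the same approach as the paper: use the explicit $\mathfrak{sl}_4$ data assembled just before the theorem to verify that $\overline{C'}$ is a nilpotent component of $C_m(\mathfrak{sl}_4)$ with a point where the $T$-space dimension matches $\dim(C')$, and then invoke Corollary~\ref{fin} via the $A_3$ subdiagram. Your write-up is slightly more explicit than the paper's (you spell out why $A_3$ sits inside each of the relevant Dynkin diagrams and handle $L=\mathfrak{sl}_4$ separately), but the argument is the same.
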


\subsection{Reducibility of $C_{m}(L)$ for $L$ of types $B,C,F$, and $m \geq 3$}

We will use the representation of $\mathfrak{sp}_{2l}$ as in \cite{Hump}, i.e. its elements are of the form $\begin{bmatrix}
A & B\\
C & -A^{T}
\end{bmatrix}$, where $A$ is any $l \times l$ matrix, while $B$ and $C$ are symmetric $l \times l$ matrices. To prove reducibility of commuting varieties we use the same approach as in the previous section.\\

We take\\
\[ x_1 = \left[ \begin{array}{cccc}
0 & 1 & 0 & 0\\
0 & 0 & 0 & 0\\
0 & 0 & 0 & 0\\
0 & 0 & -1 & 0
\end{array}\right], x_2 = \left[ \begin{array}{cccc}
0 & 0 & 1 & 0\\
0 & 0 & 0 & 0\\
0 & 0 & 0 & 0\\
0 & 0 & 0 & 0
\end{array}
\right],
x_{3} = \left[ \begin{array}{cccc}
0 & 0 & 0 & 0\\
0 & 0 & 0 & 0\\
0 & 0 & 0 & 0\\
0 & 1 & 0 & 0
\end{array}
\right].
\]\\

The $T$-space associated to the point $(x_{1},x_{2},x_{3}) \in C_3(\mathfrak{sp}_4)$ is given by\\
\scalebox{0.75}{$\left\{\left(\left[ \begin{array}{cccc}
a_1 & a_2 & a_5 & a_6\\
0 & -a_1 & a_6 & 0\\
0 & a_9 & -a_1 & 0\\
a_9 & a_{10} & -a_2 & a_1
\end{array} \right],\left[ \begin{array}{cccc}
-a_9 & c_2 & c_5 & -a_1\\
0 & 0 & -a_1 & 0\\
0 & 0 & a_9 & 0\\
0 & c_{10} & -c_2 & 0
\end{array} \right],\left[\begin{array}{cccc}
0 & b_2 & b_5 & 0\\
0 & -a_6 & 0 & 0\\
0 & a_1 & 0 & 0\\
a_1 & b_{10} & -b_2 & a_6
\end{array}\right]\right),a_1,a_2,a_5,a_6,a_9,a_{10},c_2,c_5,c_{10},b_2,b_5,b_{10} \in F\right\},$}\\
and a matrix commuting with $x_1,x_2,x_3$ is of the form\\
$$z = \begin{bmatrix}
0 & \alpha_2 & \alpha_5 & 0\\
0 & 0 & 0 & 0\\
0 & 0 & 0 & 0\\
0 & \alpha_{10} & -\alpha_2 & 0
\end{bmatrix},$$\\
so\\
\begin{equation*}\dim(T_{(x_1.x_2,x_3,0,...,0)}C_m(\mathfrak{sp}_4)) = 12+3(m-3) = 3m+3.\\ \end{equation*}

Let\\
\[S' = \left\{ \left(
\left[ \begin{array}{cccc}
0 & a_{2,2} & a_{2,3} & 0\\
0 & 0 & 0 & 0\\
0 & 0 & 0 & 0\\
0 & a_{2,4} & - a_{2,2} & 0
\end{array} \right]\\
, ...., \left[ \begin{array}{cccc}
0 & a_{m,2} & a_{m,3} & 0\\
0 & 0 & 0 & 0\\
0 & 0 & 0 & 0\\
0 & a_{m,4} & - a_{m,2} & 0
\end{array} \right]\\
\right): a_{i,2},a_{i,3},a_{i,4} \in F\right\}.
\]\\
Clearly, $S' \subset C_{m-1}(C_{\mathfrak{sp}_4}(x_1))$ is an affine space of commuting matrices of dimension $3(m-1)$, hence irreducible.\\

The stabilizer $Stab_{\mathrm{SP}_4}(x_1)$ is\\
\scalebox{0.8}{$\left\{\left[\begin{array}{cccc}
\frac{1}{y_{11}} & \frac{y_{15}}{y_{11}^2} & y_3 & 0\\
0 & \frac{1}{y_{11}} & 0 & 0\\
0 & 0 & y_{11} & 0\\
0 & y_{14} & y_{15} & y_{11}
\end{array}\right]: y_3,y_{15},y_{14} \in F, y_{11}\in F^*\right\} \cup \left\{\left[\begin{array}{cccc}
0 & y_2 & \frac{y_{14}}{y_{10}^2} & \frac{1}{y_{10}}\\
0 & 0 & -\frac{1}{y_{10}} & 0\\
0 & y_{10} & 0 & 0\\
-y_{10} & y_{14} & y_{15} & 0
\end{array}\right]: y_2,y_{14},y_{15} \in F, y_{10} \in F^*\right\}$},\\
and it is easy to verify that $S'$ is $Stab_{SP_4}(x_1)$-stable. The dimension of $C'=  SP_4 \cdot (\{x_1\} \times S') \subset C_4(\mathfrak{sp}_4)$ is\\
$$\dim(C') = \dim(S') + \dim(\mathfrak{sp}_4) - \dim(C(x_1))= 3m+3.$$\\

Since $\dim(C') = \dim(T_{(x_1,x_2,x_3,0,...,0)}C_m(\mathfrak{sp}_4))$, $\overline{C'}$ is a component of $C_m(\mathfrak{sp}_4)$, and $(x_1,x_2,x_3,0,...,0)$ is a smooth point on it. Applying Corollary \ref{fin} we get:\\

\begin{theorem} The variety $C_m(L)$ is irreducible for every $m \geq 3$ and  $L$ of types $F_{4}$ and $B_l$, $C_l$ for $l \geq 2$.\\ \end{theorem}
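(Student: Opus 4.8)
The assertion to prove is that $C_m(L)$ is \emph{reducible} for $m\ge 3$ and $L$ of type $F_4$ or $B_l,C_l$ with $l\ge 2$ (the word ``irreducible'' above being an evident slip, consistent with the section heading and with Corollary~\ref{fin}, whose conclusion is reducibility). The plan is to reduce everything to a single computation in $\mathfrak{sp}_4$. For each $L$ in the list the Dynkin diagram of $L$ contains a connected rank-$2$ subdiagram with a double bond: the last two nodes in types $B_l$ and $C_l$ (so that $L'=L$ when $l=2$), and the two central nodes in type $F_4$. Writing $\Delta'$ for the corresponding pair of simple roots and $\Phi'=\Phi\cap\mathrm{span}(\Delta')$, the subalgebra $L'=H'+\sum_{\alpha\in\Phi'}L_\alpha$ is simple of rank $2$ of type $B_2$ or $C_2$, hence isomorphic to $\mathfrak{sp}_4$, and its Dynkin diagram is a subdiagram of that of $L$. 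Since isomorphic Lie algebras have isomorphic commuting varieties, Corollary~\ref{fin} reduces the theorem to exhibiting, in $C_m(\mathfrak{sp}_4)$, a component consisting only of nilpotent $m$-tuples that carries a point $p$ with $\dim T_p C_m(\mathfrak{sp}_4)=\dim(\text{that component})$.

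To do this I would exhibit the component by hand. Let $x_1\in\mathfrak{sp}_4$ be the nilpotent of Jordan type $(2,2)$ displayed above; it lies in the $6$-dimensional nilpotent orbit, so $\dim C_{\mathfrak{sp}_4}(x_1)=4$. Let $S'$ be the $3(m-1)$-dimensional affine space of $(m-1)$-tuples displayed above; a direct inspection shows that its entries commute with each other and with $x_1$, that $S'$ is irreducible, and — from the explicit description of $\mathrm{Stab}_{\mathrm{SP}_4}(x_1)$ given above — that $S'$ is $\mathrm{Stab}_{\mathrm{SP}_4}(x_1)$-stable. Set $C'=\mathrm{SP}_4\cdot(\{x_1\}\times S')$. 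Then $\overline{C'}$ is irreducible, being the image of the irreducible variety $\mathrm{SP}_4\times S'$, consists only of nilpotent $m$-tuples, and by Lemma~\ref{Lemma11} has dimension $\dim\mathrm{SP}_4+\dim S'-\dim C_{\mathfrak{sp}_4}(x_1)=10+3(m-1)-4=3m+3$.

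Then I would check that $\overline{C'}$ is a component. The point $p=(x_1,x_2,x_3,0,\dots,0)$, with $x_2,x_3$ as displayed, lies in $C'$ because $(x_2,x_3,0,\dots,0)\in S'$. A computation shows that the $T$-space at $p$ has dimension $12+3(m-3)=3m+3$: the $T$-space of $C_3(\mathfrak{sp}_4)$ at $(x_1,x_2,x_3)$ is the $12$-parameter family displayed above, and each of the $m-3$ trailing zero slots contributes an independent copy of $\cap_{i=1}^3 C_{\mathfrak{sp}_4}(x_i)$, which is $3$-dimensional. By Lemma~\ref{tangentspace} the genuine tangent space satisfies $\dim T_p C_m(\mathfrak{sp}_4)\le 3m+3$; since $\overline{C'}\subseteq C_m(\mathfrak{sp}_4)$ is irreducible of dimension $3m+3$ through $p$, the chain $3m+3\le\dim_p C_m(\mathfrak{sp}_4)\le\dim T_p C_m(\mathfrak{sp}_4)\le 3m+3$ consists of equalities, so $p$ is a smooth point and $\overline{C'}$ is the unique component through it. Finally $\overline{C'}\ne\mathrm{Reg}_{m,\mathfrak{sp}_4}$: the latter has dimension $2m+8$ by Lemma~\ref{Reg} and contains non-nilpotent tuples such as $(h,0,\dots,0)$ with $h\in H$ regular semisimple, whereas $\overline{C'}$ contains only nilpotent tuples. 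Thus $C_m(\mathfrak{sp}_4)$ has two distinct irreducible components, and by Corollary~\ref{fin} $C_m(L)$ is reducible for each $L$ in the list.

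I expect the main obstacle to be the $T$-space computation at $p$: one must solve the linear system $[x_i,z_j]=[x_j,z_i]$ $(1\le i,j\le 3)$ inside $(\mathfrak{sp}_4)^3$ to confirm the displayed $12$-dimensional solution space, and separately check that $\cap_{i=1}^3 C_{\mathfrak{sp}_4}(x_i)$ is $3$-dimensional; verifying that $S'$ is $\mathrm{Stab}_{\mathrm{SP}_4}(x_1)$-stable is a similar, routine matrix calculation. The remaining steps — picking out the rank-$2$ double-bond subdiagrams, invoking Corollary~\ref{fin}, and combining Lemmas~\ref{Lemma11}, \ref{tangentspace} and \ref{Reg} — are purely organisational.
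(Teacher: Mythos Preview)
Your proposal is correct and follows essentially the same approach as the paper: exhibit the nilpotent component $\overline{C'}=\overline{\mathrm{SP}_4\cdot(\{x_1\}\times S')}$ of $C_m(\mathfrak{sp}_4)$, verify via Lemma~\ref{Lemma11} and the $T$-space computation at $(x_1,x_2,x_3,0,\dots,0)$ that $\dim C'=\dim T_p C_m(\mathfrak{sp}_4)=3m+3$, and then invoke Corollary~\ref{fin} using the $B_2=C_2$ subdiagram present in $B_l$, $C_l$, and $F_4$. Your identification of the stated ``irreducible'' as a slip for ``reducible'' is also correct.
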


\subsection{Reducibility of $C_{m}(L)$ for $L$ of type $G_{2}$}

In this section we show reducibility of $C_m(L)$ if $L$ is of type $G_2$, by adapting \cite[Example 7.2]{Ngo3} to our setting. \cite[Example 7.2]{Ngo3} shows that the variety $C_m(N(L))$ of $m$-tuples of commuting nilpotent elements of $L$ is not equidimensional for $m \geq 3$. We will use the same example to show that $C_m(L)$ is reducible for $m \geq 3$, but contrary to \cite{Ngo3}, we will need $T$-spaces to prove this. In the calculation we will use Chevalley basis. Recall that for $\phi,\psi \in \Phi$, such that $\phi+\psi \in \Phi$, we denoted by $R_{\phi,\psi}$ the constant satisfying $[x_{\phi},x_{\psi}] = R_{\phi,\psi}x_{\phi+\psi}$. There is some choice of signs of the constants $R_{\phi,\psi}$. We will use the signs that are used in GAP. Reader may find them in Appendix 2.\\

Let $\Delta = \{\alpha,\beta\}$ be the base of the root system $G_2$, with $\alpha$ being the shorter root and $\beta$ being the longer root. We take $(x_{\beta}+x_{3\alpha+\beta}, x_{2\alpha+\beta},x_{3\alpha+2\beta}) \in C_3(L)$, that was considered in \cite[Example 7.2]{Ngo3}. We denote $x_1 = x_{\beta}+x_{3\alpha+\beta},x_2 = x_{2\alpha+\beta},x_3= x_{3\alpha+2\beta}$.\\

The $T$-space to $C_3(L)$ associated to the point $(x_{\beta}+x_{3\alpha+\beta},x_{2\alpha+\beta},x_{3\alpha+2\beta})$ is\\
\begin{center}$T = \{-\frac{b_{14}}{3}x_{\alpha}+ a_2x_{\beta}+ a_3x_{\alpha+\beta}+ a_4x_{2\alpha+\beta}+ a_5x_{3\alpha+\beta}+ a_6x_{3\alpha+2\beta}+ a_7x_{-\alpha} +b_9x_{-\beta}+b_9x_{-3\alpha-\beta}+ a_{13}h_{\alpha}+ a_{14} h_{\beta},$\end{center}
\begin{center}$2 a_7x_{\alpha}+ b_2x_{\beta}+ b_3x_{\alpha+\beta}+ b_4x_{2\alpha+\beta}+(3 a_3 + b_2)x_{3\alpha+\beta}+ b_6x_{3\alpha+2\beta} -a_{13}x_{-\alpha}+ b_9x_{-\alpha-\beta}+ \frac{2 b_{14}}{3}h_{\alpha}+ b_{14}h_{\beta}$,\end{center}
\begin{center}$c_2x_{\beta}+ \frac{b_{14}}{3}x_{\alpha+\beta}+c_4x_{2\alpha+\beta}+ (a_{14} + c_2)x_{3\alpha+\beta}+ c_6x_{3\alpha+2\beta}+ b_9h_{\alpha}+2 b_9h_{\beta},$\end{center}
\begin{center}$b_{14},a_2,a_3,a_4,a_5,a_6,a_7,b_9,a_{13},a_{14},b_2,b_3,b_4,b_6,c_2,c_4,c_6 \in F\}.$\end{center}

An easy computation shows that\\
\begin{equation*} \cap_{j=1}^3 C(x_j) = \{i_3(x_{\beta}+x_{3\alpha+\beta})+ i_7x_{2\alpha+\beta}+i_{11}x_{3\alpha+2\beta}, i_3,i_7,i_{11} \in F\}.\\ \end{equation*}

The same argument as in Section 4.1 shows that\\
\begin{equation*} \dim(T_{(x_1,x_2,x_3,0,...,0)}C_m(L)) = 17+3(m-3) = 3m+8.\end{equation*}

Let $\mathcal{L}_2$ be the Lie subalgebra of $L$, generated by $x_{\beta}+x_{3\alpha+\beta},x_{2\alpha+\beta},x_{3\alpha+2\beta}$ and let $C' = G \cdot \mathcal{L}_2^m$. Then $\dim(C') = 3m+8$ by \cite[Example 7.2]{Ngo3}. Since the authors of \cite{Ngo3} have not published that paper yet, we give an outline of their argument: $C'$ is the image of the map\\
\begin{equation*} G \times \mathcal{L}_2^m \to C'\end{equation*}
\begin{equation*} (g,x_1,...,x_m) \mapsto (g \cdot x_1,...,g \cdot x_m)\end{equation*}
whose fibre at the general point is the normalizer of $\mathcal{L}_2$ in $L$ which is $6$-dimensional. Theorem on Dimension of Fibres then implies that $\dim(C') = 14+3m-6=3m+8$.\\
 
Since $C'$ contains only $m$-tuples of nilpotent elements, its closure can not be $\mathrm{Reg}_{m,L}$. We have proven:\\

\begin{theorem} Let $L$ be a simple Lie algebra of type $G_2$. Then $C_{m}(L)$ is reducible for $m \geq 3$.\\ \end{theorem}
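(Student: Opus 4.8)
Since the only connected proper subdiagrams of the $G_2$ Dynkin diagram are single nodes, which give $L' \cong \mathfrak{sl}_2$ with $C_m(\mathfrak{sl}_2)$ irreducible for all $m$, the Theorem on Adding Diagonals reduces $G_2$ to nothing simpler, and applying Corollary \ref{fin} with $L' = L$ would be circular. So the plan is to exhibit a non-regular component of $C_m(L)$ directly, by the same smooth-point argument used for the base cases in Sections 4.1 and 4.2. Fix a Chevalley basis of $L$ with base $\Delta = \{\alpha,\beta\}$ ($\alpha$ short, $\beta$ long) and set $x_1 = x_\beta + x_{3\alpha+\beta}$, $x_2 = x_{2\alpha+\beta}$, $x_3 = x_{3\alpha+2\beta}$; these are commuting nilpotents, and I would let $\mathcal{L}_2$ be the (abelian, $3$-dimensional, since the three elements are independent and commute) Lie subalgebra they generate, which lies in the nilradical of a Borel, so $\mathcal{L}_2^m \subset C_m(L)$ consists of tuples of nilpotent elements. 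Put $C = \overline{G \cdot \mathcal{L}_2^m}$ and $p = (x_1,x_2,x_3,0,\dots,0) \in C$.

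The first step is to compute $\dim(C)$ by applying the Fibre Dimension Theorem (Theorem \ref{fdt}) to the dominant map $G \times \mathcal{L}_2^m \to C$, $(g,y_1,\dots,y_m) \mapsto (g\cdot y_1,\dots,g\cdot y_m)$. For $m \ge 3$ a generic $m$-tuple in $\mathcal{L}_2^m$ spans $\mathcal{L}_2$, so the generic fibre is $N_G(\mathcal{L}_2)$, which has dimension $\dim N_L(\mathcal{L}_2) = 6$ (this normalizer computation is carried out in \cite[Example 7.2]{Ngo3}); hence $\dim(C) = \dim(G) + m\dim(\mathcal{L}_2) - 6 = 14 + 3m - 6 = 3m+8$.

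The second step is to bound $\dim(T_p(C_m(L)))$ by computing the $T$-space at $p$. Exactly as in Sections 4.1 and 4.2, writing $x_4 = \dots = x_m = 0$ the equations $[x_i,z_j] = [x_j,z_i]$ split into three groups: the pairs $i,j \le 3$ give precisely the $T$-space of $C_3(L)$ at $(x_1,x_2,x_3)$ (equations in $z_1,z_2,z_3$ only), the pairs with one index $\ge 4$ force $z_j \in \bigcap_{i=1}^3 C_L(x_i)$ for $j \ge 4$, and the pairs $i,j \ge 4$ are vacuous. A direct computation in the Chevalley basis (with the sign conventions for the $G_2$ structure constants $R_{\phi,\psi}$ fixed in the appendix) shows that this $T$-space of $C_3(L)$ is $17$-dimensional and that $\bigcap_{i=1}^3 C_L(x_i) = \mathrm{span}(x_1,x_2,x_3)$ is $3$-dimensional. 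Therefore $\dim(T_p(C_m(L))) = 17 + 3(m-3) = 3m+8 = \dim(C)$.

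Combining the two steps finishes the proof: $C \subseteq C_m(L)$ is irreducible of dimension $3m+8$ and passes through $p$, while by Lemma \ref{tangentspace} one has $\dim_p(C_m(L)) \le \dim(T_p(C_m(L))) \le 3m+8$; hence all these quantities are equal, $p$ is a smooth point of $C_m(L)$, and the unique component of $C_m(L)$ through $p$ must be $C$. Since $C$ consists of tuples of nilpotent elements it contains no regular semisimple tuple, so $C \ne \mathrm{Reg}_{m,L}$, yielding at least two components and reducibility of $C_m(L)$. The main obstacle is the $T$-space computation itself: solving the linear system for $(z_1,z_2,z_3) \in L^3$ requires the full $G_2$ multiplication table with consistent signs and careful bookkeeping of the $42$ coordinates, and one must also verify (or quote) $\dim N_L(\mathcal{L}_2) = 6$; the splitting of the $T$-space and the smoothness conclusion are routine and identical to the earlier cases.
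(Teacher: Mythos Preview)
Your proposal is correct and follows essentially the same route as the paper: the same triple $(x_\beta+x_{3\alpha+\beta},\,x_{2\alpha+\beta},\,x_{3\alpha+2\beta})$ from \cite[Example 7.2]{Ngo3}, the same candidate component $C=\overline{G\cdot\mathcal{L}_2^m}$ with $\dim(C)=3m+8$ via the Fibre Dimension Theorem and $\dim N_L(\mathcal{L}_2)=6$, and the same $T$-space computation giving $\dim T_p(C_m(L))=17+3(m-3)=3m+8$. Your added remark that Corollary~\ref{fin} cannot be used here because the only proper connected subdiagram of $G_2$ yields $\mathfrak{sl}_2$ is a helpful motivation the paper leaves implicit.
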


\begin{corollary} For any $m \geq 4$ and any semisimple Lie algebra $L$ that is not of type $\mathfrak{sl}_2 \times... \times \mathfrak{sl}_2 \times \mathfrak{sl}_3 \times ... \times \mathfrak{sl}_3$ the variety $C_m(L)$ is reducible.\\ \end{corollary}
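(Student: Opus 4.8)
The plan is to reduce the corollary to the simple case handled in Theorem 1.1 by exploiting the behaviour of $C_m$ under direct products. First I would decompose the semisimple Lie algebra as $L = L_1 \times \cdots \times L_k$ into its simple ideals $L_i$. An $m$-tuple $(x_1,\dots,x_m) \in L^m$ is commuting if and only if, for each $i$, the $m$-tuple of $L_i$-components is commuting, so there is a natural isomorphism of affine varieties $C_m(L) \cong C_m(L_1) \times \cdots \times C_m(L_k)$.

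Next I would use the standard fact that, over the algebraically closed field $F$, a finite product of nonempty affine varieties is irreducible if and only if each factor is irreducible; in particular, if one factor is reducible then so is the product. Only the easy direction is needed here: a decomposition of a single $C_m(L_i)$ into two proper closed subsets, multiplied by the remaining factors, produces such a decomposition of $C_m(L)$. Hence it suffices to exhibit one index $i$ with $C_m(L_i)$ reducible.

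Finally I would unwind the hypothesis. The assumption that $L$ is not of type $\mathfrak{sl}_2 \times \cdots \times \mathfrak{sl}_2 \times \mathfrak{sl}_3 \times \cdots \times \mathfrak{sl}_3$ means exactly that some simple factor $L_{i_0}$ is isomorphic neither to $\mathfrak{sl}_2$ nor to $\mathfrak{sl}_3$. Since $m \geq 4$, part (1) of Theorem 1.1 (established via the theorems of this section, covering types $A$, $D$, $E$, $B$, $C$, $F$, $G$) tells us that $C_m(L_{i_0})$ is reducible, and therefore $C_m(L)$ is reducible.

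I do not expect any genuine obstacle: the statement is a formal consequence of Theorem 1.1 together with the multiplicativity $C_m(L_1 \times \cdots \times L_k) = C_m(L_1) \times \cdots \times C_m(L_k)$ and the elementary fact that a product of varieties inherits reducibility from any one of its factors. The only point worth an extra remark is the converse observation that if every simple factor of $L$ is $\mathfrak{sl}_2$ or $\mathfrak{sl}_3$ then, by the irreducibility of $C_m(\mathfrak{sl}_2)$ and $C_m(\mathfrak{sl}_3)$ from \cite{Guralnick}, the product $C_m(L)$ stays irreducible, which shows that the hypothesis of the corollary is sharp; this converse is not needed for the corollary itself.
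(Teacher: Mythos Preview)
Your argument is correct and is exactly the intended one: the paper states this corollary without proof, treating it as immediate from Theorem~1.1(1) together with the obvious decomposition $C_m(L_1\times\cdots\times L_k)\cong C_m(L_1)\times\cdots\times C_m(L_k)$ and the fact that a product of nonempty varieties is reducible as soon as one factor is. Your added remark on sharpness via the irreducibility of $C_m(\mathfrak{sl}_2)$ and $C_m(\mathfrak{sl}_3)$ is a nice bonus that the paper does not spell out.
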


\subsection{Reducibility of $C_{3}(L)$ for $L$ of type $D_l$ with $l \geq 10$}

We use the representation of $\mathfrak{so}_{2l}$ as in \cite{Hump}: Elements of $\mathfrak{so}_{2l}$ are of the form 
$\begin{bmatrix}
A & B\\
C & - A^{T}
\end{bmatrix}$
where $A$ is an arbitrary $l \times l$ matrix, and $B$ and $C$ are antisymmetric $l \times l$ matrices. Note that the dimension of $\mathfrak{so}_{2l}$ is $2l^{2} -l$.\\

Take\\
\[ x_{1} = \left[\begin{array}{cccc}
0 & I & 0 & I\\
0 & 0 & - I & 0\\
0 & 0 & 0 & 0\\
0 & 0 & - I & 0\\
\end{array} \right],\] 
where each block is of size $s \times s$. The centralizer of $x_{1}$ consists of all matrices of the form
$\begin{bmatrix}
A_{1} & A_{2} & A_5 & A_6\\
0 & A_{4} & - A_6^{T} & A_1+A_4^{T}\\
0 & 0 & A_1 & 0\\
0 & A_1 -A_4 & -A_2^T & A_4\\
\end{bmatrix}$
where $A_1,A_4,A_5$ are all antisymmetric, $A_2$ is arbitrary and $A_2+A_6$ is symmetric. The dimension of the centralizer is $3s^{2}- s$.\\
\\
Let $S'$ be the set of pairs of commuting matrices of the type
$\begin{bmatrix}
0 & A_{2} & A_5 & A_6\\
0 & 0 & - A_6^T & 0\\
0 & 0 & 0 & 0\\
0 & 0 & -A_2^T & 0
\end{bmatrix}$\\
where $A_5$ is antisymmetric and $A_2 +A_6$ is symmetric. The space of pairs of matrices of this shape has dimension  $4s^{2}$.Now we consider the commutativity condition. Commutator of such matrices is\\
\[
[A,B] = \left[
\left[
\begin{array}{cccc}
0 & A_{2} & A_5 & A_6\\
0 & 0 & -A_6^T & 0\\
0 & 0 & 0 & 0\\
0 & 0 & -A_2^T & 0
\end{array}
\right]
,
\left[
\begin{array}{cccc}
0 & B_2 & B_5 & B_6\\
0 & 0 & -B_6^T & 0\\
0 & 0 & 0 & 0\\
0 & 0 & -B_2^T & 0
\end{array}
\right]
\right]
=\]\\
\[= \\
\left[ \begin{array}{cccc}
0 & 0 & -A_2B_6^T - A_6B_2^T+B_2A_6^T+B_6A_2^T & 0\\
0 & 0 & 0 & 0\\
0 & 0 & 0 & 0\\
0 & 0 & 0 & 0
\end{array}
\right].
\\
\]
\\
\\
This commutator lies in $\mathfrak{so}_{2l}$, which means that the matrix $-A_2B_6^T - A_6B_2^T+B_2A_6^T+B_6A_2^T$ is antisymmetric, so the condition $[A,B] = 0$ is described by at most $\frac{s(s- 1)}{2}$ equations, so the dimension of $S'$ is at least $4s^{2} - \frac{s(s- 1)}{2} = \frac{7}{2}s^{2} + \frac{s}{2}$. One can easily check that $S'$ is $\mathrm{Stab}_{\mathrm{SO}_{4s}}(x_1)$-stable. Let $C' = \mathrm{SO}_{4s} \cdot (\{x_1\} \times S')$. As in the proof of Lemma \ref{Lemma11} we have\\
\begin{equation*} \dim(C') \geq \dim(S') + \dim(SO_{4s}) - \dim(C(x_{1})) \geq\\ \end{equation*}
\begin{equation*} \geq \frac{7}{2}s^{2} + \frac{s}{2} + \dim(SO_{4s}) - 3s^{2} + s = \frac{s^{2} + 3s}{2} + \dim(SO_{4s}).\\ \end{equation*}

Now recall that the dimension of the regular component of $C_3(\mathfrak{so}_{4s})$ is $\dim(SO_{4s}) + 2 \cdot 2s$. If  $s$ is such that \begin{equation*} \frac{s(s + 3)}{2} + \dim(SO_{4s}) \geq 4s + \dim(SO_{4s}), \end{equation*}\\
then $C_{3}(\mathfrak{so}_{4s})$ is reducible. The above inequality is equivalent to $s(s -5) \geq 0$, i.e. to $l \geq 10$. It follows that:\\

\begin{theorem} For every $l \geq 10$ the variety $C_{3}(\mathfrak{so}_{2l})$ is reducible.\\ \end{theorem}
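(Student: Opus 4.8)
The proof is essentially a wrap-up of the computations above. Writing $l = 2s$, the plan is to show that the subvariety $C' = \mathrm{SO}_{4s}\cdot(\{x_1\}\times S')$ of $C_3(\mathfrak{so}_{4s})$ constructed above has dimension at least $\dim\mathrm{Reg}_{3,\mathfrak{so}_{4s}}$ as soon as $s \ge 5$, and then to deduce reducibility for all $l \ge 10$ via the subdiagram criterion of Corollary 3.3. The three numerical inputs I would pin down are: (i) the centralizer $C_{\mathfrak{so}_{4s}}(x_1)$ has the stated block form, so $\dim C_{\mathfrak{so}_{4s}}(x_1) = 3s^{2}-s$; (ii) the set of pairs of matrices of the shape defining $S'$ is an affine space of dimension $4s^{2}$ (a single such matrix contributes $s^{2}$ from $A_2$, $\binom{s}{2}$ from the antisymmetric $A_5$, and $\binom{s+1}{2}$ from $A_6$ subject to $A_2+A_6$ symmetric); and (iii) the commutator of two such matrices reduces to the single $(1,3)$-block $-A_2B_6^{T}-A_6B_2^{T}+B_2A_6^{T}+B_6A_2^{T}$, which is automatically antisymmetric, so $[A,B]=0$ imposes at most $\binom{s}{2}$ scalar equations and hence $\dim S' \ge 4s^{2}-\tfrac{s(s-1)}{2}=\tfrac{7}{2}s^{2}+\tfrac{s}{2}$.

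Granting these, applying the fibre-dimension argument from the proof of Lemma \ref{Lemma11} to the orbit map $\mathrm{SO}_{4s}\times(\{x_1\}\times S')\to C'$ (using that $S'$ is $\mathrm{Stab}_{\mathrm{SO}_{4s}}(x_1)$-stable and lies in $C_2(C_{\mathfrak{so}_{4s}}(x_1))$) yields $\dim C' \ge \dim S' + \dim\mathrm{SO}_{4s} - \dim C_{\mathfrak{so}_{4s}}(x_1) \ge \dim\mathrm{SO}_{4s} + \tfrac{s^{2}+3s}{2}$. All triples in $C'$ are nilpotent. By Lemma \ref{Reg}, $\dim\mathrm{Reg}_{3,\mathfrak{so}_{4s}} = \dim\mathrm{SO}_{4s} + 2\cdot 2s$, and $\tfrac{s^{2}+3s}{2} \ge 4s$ is equivalent to $s(s-5)\ge 0$, i.e. to $s \ge 5$, i.e. to $l \ge 10$. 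So for $l \ge 10$ even, a maximal-dimensional irreducible component $X$ of $\overline{C'}$ has $\dim X \ge \dim\mathrm{Reg}_{3,\mathfrak{so}_{4s}}$ and consists only of nilpotent triples; the component of $C_3(\mathfrak{so}_{4s})$ containing $X$ therefore has dimension at least $\dim\mathrm{Reg}_{3,\mathfrak{so}_{4s}}$ and is different from $\mathrm{Reg}_{3,\mathfrak{so}_{4s}}$ (otherwise $X$ would equal $\mathrm{Reg}_{3,\mathfrak{so}_{4s}}$, which contains non-nilpotent triples). Thus $C_3(\mathfrak{so}_{2l})$ is reducible for even $l \ge 10$, and in particular $C_3(\mathfrak{so}_{20})$ has a nonregular component of dimension at least $\dim(\mathfrak{so}_{20}) + (3-1)\,\mathrm{rank}(\mathfrak{so}_{20})$.

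To cover all $l \ge 10$ I would finish with Corollary 3.3: the Dynkin diagram $D_{10}$ of $\mathfrak{so}_{20}$ is a subdiagram of the Dynkin diagram $D_l$ of $\mathfrak{so}_{2l}$ for every $l \ge 10$, and $C_3(\mathfrak{so}_{20})$ has a component different from $\mathrm{Reg}_{3,\mathfrak{so}_{20}}$ of dimension at least $\dim(\mathfrak{so}_{20}) + (3-1)\,\mathrm{rank}(\mathfrak{so}_{20})$, so Corollary 3.3 gives that $C_3(\mathfrak{so}_{2l})$ is reducible.

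The main obstacle is input (iii): one has to compute the commutator of two matrices of the prescribed block shape, verify that it collapses to a single block in the $(1,3)$-position, and verify that lying in $\mathfrak{so}_{4s}$ forces that block to be antisymmetric, so that the commuting condition costs at most $\binom{s}{2}$ equations rather than $s^{2}$. The remaining ingredients — the block form of $C_{\mathfrak{so}_{4s}}(x_1)$, the $\mathrm{Stab}_{\mathrm{SO}_{4s}}(x_1)$-stability of $S'$, and the final comparison of $\tfrac{s^{2}+3s}{2}$ with $4s$ — are routine. A minor subtlety is that $S'$ need not be irreducible; but a connected group cannot permute irreducible components, so every component of $S'$ is $\mathrm{Stab}_{\mathrm{SO}_{4s}}(x_1)^{\circ}$-stable and Lemma \ref{Lemma11} applies to one of maximal dimension, which suffices for the bound on $\dim C'$.
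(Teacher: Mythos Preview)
Your proposal is correct and follows the paper's approach: both construct $C' = \mathrm{SO}_{4s}\cdot(\{x_1\}\times S')$, bound $\dim S' \ge \tfrac{7}{2}s^{2}+\tfrac{s}{2}$ via the observation that the commutator collapses to a single antisymmetric $(1,3)$-block (hence at most $\binom{s}{2}$ equations), apply the fibre-dimension argument of Lemma~\ref{Lemma11}, and compare with $\dim\mathrm{Reg}_{3,\mathfrak{so}_{4s}}$ to obtain the threshold $s\ge 5$. Your write-up is in fact slightly more careful than the paper's in two respects: you explicitly handle the possible reducibility of $S'$ by passing to a top-dimensional $\mathrm{Stab}_{\mathrm{SO}_{4s}}(x_1)^{\circ}$-stable component, and you invoke Corollary~3.3 with $L'=\mathfrak{so}_{20}$ to cover odd $l\ge 11$, whereas the paper's direct construction is only written for $l=2s$ even and leaves the extension to all $l\ge 10$ implicit.
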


\subsection{Reducibility of $C_{m}(L)$ for $L$ of type $E_{7}$ and $E_{8}$}

To prove reducibility of $C_m(L)$, where $L$ is of type $E_7$ or $E_8$, we will use results about $\mathbb{Z}$-gradings on simple Lie algebras induced by $\mathfrak{sl}_2$-triples. We first recall these results. For more information on $\mathbb{Z}$-gradings, we refer to \cite[Chapter 5]{Jan}. Let $h$ be a semisimple element of an $\mathfrak{sl}_2$-triple and let $L_i = \{x \in L: [h,x] = ix\}$. We define $d$ as the highest $i$ such that $L_{i} \neq \{0\}$. We have (see \cite[4.10]{Jan} and \cite[Chapter 3.4]{Col}) that $[L_i,L_j] \subset L_{i+j}$ and that $L = \oplus_{i=-d}^dL_i$. Moreover, by \cite[Proposition 5.8.]{Jan} we have that $C(x) = \oplus_{i = 0}^{d}(L_{i} \bigcap C(x))$.\\

If we have $L$ of type $E_7$ or $E_8$, then the nilpotent orbits for the action of $G$ on $L$ are known and classified. For the classification of nilpotent orbits, we refer to \cite{Col}. In the case of $E_7$ the orbits are parametrized by certain 7-tuples $(a_1,...,a_7)$ where $a_{i} \in \{0,1,2\}$, for all $1 \leq i \leq 7$. The list of all orbits can be found in, for example, \cite[Table 22.1.2]{Liebeck}. We take the orbit denoted by (0,2,0,0,0,0,0). Using GAP (see Appendix), we fix a representative of this orbit $x = x_{\alpha_2+\alpha_3+2\alpha_4+\alpha_5+\alpha_6} + x_{\alpha_1+\alpha_2+2\alpha_3+2\alpha_4+\alpha_5} + x_{\alpha_1+\alpha_2+\alpha_3+\alpha_4+\alpha_5+\alpha_6+\alpha_7}+ x_{\alpha_1+\alpha_2+\alpha_3+2\alpha_4+2\alpha_5+\alpha_6}+ x_{\alpha_2+\alpha_3+2\alpha_4+2\alpha_5+\alpha_6+\alpha_7}$, where $\alpha_i$ are base roots of $\Phi$, labelled as in \cite[Theorem 11.4]{Hump}. Using GAP or by \cite[Table 22.1.2]{Liebeck} we get that our $x$ has centralizer of dimension 49. With GAP we compute that $h = 4h_{\alpha_1}+7h_{\alpha_2}+8h_{\alpha_3}+12h_{\alpha_4}+9h_{\alpha_5}+6h_{\alpha_6}+3h_{\alpha_7}$ is the semisimple element of an $\mathfrak{sl}_2$-triple that contains $x$. (Note that such an $\mathfrak{sl}_2$-triple always exists due to Jacobson-Morozov Theorem, see for example \cite[page 36]{Col}.)\\

In our case we get, using GAP, that $d = 4$, $L_{1} \cap C(x)$ and $L_3 \cap C(x)$ are trivial, $\dim(C(x) \bigcap L_2) = 28$ and $\dim(C(x) \bigcap L_4) = 7$. An interested reader may find explicit bases of $L_2 \cap C(x)$ and $L_4 \cap C(x)$ in Appendix 3. This means the only equations for commutativity we could possibly get in $(L_2 \oplus L_4) \cap C(x)$ are the ones arising from $[L_{2},L_{2}]$. This means at most 7 equations. We define\\
\begin{equation*} S = \{(y,z): y,z \in L_{2} \oplus L_{4}, [x,z] = [x,y] = [y,z] = 0\}.\end{equation*}
The dimension of $S$ is at least $2 \cdot 35 -7= 63$, so the dimension of $C= G \cdot (\{x\} \times S)$ is at least $133 +  63 - 49 = 147 = \dim(\mathrm{Reg}_{3,L})$. Since our $C$ contains only triples of nilpotent elements, its closure can not be the regular component. \\

This proves:\\

\begin{theorem} The variety $C_3(L)$ is reducible if $L$ is a simple Lie algebra of type $E_7$.\\ \end{theorem}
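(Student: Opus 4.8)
The plan is to construct a second irreducible component of $C_3(L)$, disjoint in nature from the regular one, using the nilpotent $x$ in the orbit labelled $(0,2,0,0,0,0,0)$ and the $\mathbb{Z}$-grading $L=\bigoplus_{i=-4}^{4}L_i$ coming from the $\mathfrak{sl}_2$-triple through $x$ discussed above. The whole argument is a dimension count, analogous to Section 4.3 for $G_2$, but with the bonus that here the dimension bound alone is decisive and no $T$-space needs to be computed.

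First I would isolate the numerical data produced by the GAP computation: $L_1\cap C(x)=L_3\cap C(x)=\{0\}$, $\dim(L_2\cap C(x))=28$ and $\dim(L_4\cap C(x))=7$, so that $M:=(L_2\oplus L_4)\cap C(x)$ has dimension $35$. Since $[L_i,L_j]\subseteq L_{i+j}$ and $L_6=L_8=0$, for $y=y_2+y_4$ and $z=z_2+z_4$ in $M$ the bracket $[y,z]$ reduces to $[y_2,z_2]\in L_4$, and $[y,z]\in C(x)$ by the Jacobi identity because $y,z\in C(x)$; hence $[y,z]\in L_4\cap C(x)$, a $7$-dimensional space. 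Therefore $S=\{(y,z)\in M^2:[y,z]=0\}$, which is precisely the set $S$ of the discussion above, is cut out in the $70$-dimensional space $M^2$ by at most $7$ scalar equations, and $\dim S\ge 63$.

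Next I would bound $\dim C$ from below, where $C=G\cdot(\{x\}\times S)\subseteq C_3(L)$. As in the proof of Lemma \ref{Lemma11}, the fibres of the dominant morphism $G\times S\to C$, $(g,(y,z))\mapsto(g\cdot x,g\cdot y,g\cdot z)$, are contained in a set of dimension at most $\dim\mathrm{Stab}_G(x)=\dim C(x)=49$, so the Fibre Dimension Theorem (Theorem \ref{fdt}) gives $\dim C\ge \dim G+\dim S-49\ge 133+63-49=147$. By Lemma \ref{Reg} we have $\dim\mathrm{Reg}_{3,L}=\dim L+2\,\mathrm{rank}(L)=133+14=147$. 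Every triple in $C$ is nilpotent and nilpotency is a closed condition, so $\overline C$ contains no triple with nonzero semisimple part, whereas $\mathrm{Reg}_{3,L}$ contains regular semisimple triples; thus $\overline C\ne\mathrm{Reg}_{3,L}$. If $C_3(L)$ were irreducible it would coincide with $\mathrm{Reg}_{3,L}$ (a component by Lemma \ref{Reg}), forcing $\dim C\le 147$, hence $\dim\overline C=147$, and then $\overline C=\mathrm{Reg}_{3,L}$ since the latter is irreducible of the same dimension — a contradiction. So $C_3(L)$ carries an irreducible component of dimension at least $147$ other than $\mathrm{Reg}_{3,L}$, and is reducible.

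The main obstacle is the verification of the input data on the grading — that $L_1\cap C(x)$ and $L_3\cap C(x)$ vanish and that $\dim(L_2\cap C(x))=28$, $\dim(L_4\cap C(x))=7$ — which rests on the explicit representative of the orbit $(0,2,0,0,0,0,0)$, its semisimple element $h$, and the bases recorded in the appendix; granting these, everything downstream is a clean dimension estimate.
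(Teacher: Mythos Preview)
Your proof is correct and follows essentially the same route as the paper: the same nilpotent orbit, the same grading data, the same set $S\subseteq M^2$ cut out by at most $\dim(L_4\cap C(x))=7$ equations, and the same lower bound $\dim C\ge 133+63-49=147=\dim\mathrm{Reg}_{3,L}$, together with the observation that $\overline{C}$ consists of nilpotent triples and hence is not the regular component. Your use of the Jacobi identity to place $[y,z]$ inside $L_4\cap C(x)$ is a small clarification; in fact $L_4=L_4\cap C(x)$ automatically since $d=4$, so the paper's count ``at most $7$ equations'' already follows from $\dim L_4=7$.
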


Moreover, applying Corollary 3.3, we get:\\

\begin{corollary} The variety $C_3(L)$ is reducible if $L$ is a simple Lie algebra of type $E_8$.\\ \end{corollary}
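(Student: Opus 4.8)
### Proof proposal for Corollary 4.10

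The plan is to deduce the $E_8$ case from the $E_7$ case via the Theorem on Adding Diagonals, in exactly the way that Corollary 3.3 is built to permit. First I would set $L$ to be a simple Lie algebra of type $E_8$ and $L'$ a simple Lie subalgebra of type $E_7$. Such an $L'$ exists because the Dynkin diagram $E_7$ is obtained from the Dynkin diagram $E_8$ by deleting one end node; hence $E_7$ is a subdiagram of $E_8$, and by the construction in Section 2.1 (choosing $\Delta' \subset \Delta$ to be the corresponding set of simple roots, $\Phi' = \Phi \cap \mathrm{span}(\Delta')$, and $L' = H' + \sum_{\alpha \in \Phi'} L_\alpha$) we obtain a simple subalgebra $L'$ of type $E_7$ whose Dynkin diagram is a subdiagram of that of $L$. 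Corollary 2.6 guarantees $L'$ is indeed simple of the required type.

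Next I would invoke Theorem 4.7, which asserts that $C_3(L')$ is reducible for $L'$ of type $E_7$. More than that, inspecting the proof of Theorem 4.7, the reducibility is witnessed by a component $C = \overline{G' \cdot (\{x\} \times S)}$ consisting only of triples of nilpotent elements and of dimension at least $\dim(\mathrm{Reg}_{3,L'}) = \dim(L') + 2\,\mathrm{rank}(L')$; since it contains only nilpotent triples it cannot equal $\mathrm{Reg}_{3,L'}$. Thus $C_3(L')$ has a component of dimension at least $\dim(L') + (m-1)\mathrm{rank}(L')$ with $m = 3$, different from the regular component. This is precisely the hypothesis of Corollary 3.3. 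Applying Corollary 3.3 with this $L'$ yields that $C_3(L)$ is reducible, i.e. $C_3(L)$ is reducible for $L$ of type $E_8$.

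I do not expect a serious obstacle here, since all the heavy lifting has already been done: Theorem 4.7 supplies the non-regular, nilpotent component in type $E_7$, and Corollary 3.3 is the general mechanism for lifting such a component along a subdiagram inclusion. The one point that needs a word of care is that Corollary 3.3 requires a component of $C_m(L')$ of dimension \emph{at least} $\dim(L') + (m-1)\mathrm{rank}(L')$ that is \emph{different from} $\mathrm{Reg}_{m,L'}$; the ``different from'' part is immediate because the component from Theorem 4.7 consists entirely of nilpotent triples, whereas $\mathrm{Reg}_{m,L'}$ contains tuples of regular semisimple elements. One should also check that $E_7 \subset E_8$ as Dynkin subdiagrams in the precise sense used in Section 2.1 (deletion of the appropriate simple root), which is standard. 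With these remarks in place the corollary follows directly.
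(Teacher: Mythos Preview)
Your proposal is correct and follows essentially the same route as the paper: the paper simply states that the $E_8$ case follows by applying Corollary 3.3 to the $E_7$ component constructed in Theorem 4.5, using that $E_7$ is a subdiagram of $E_8$ and that the constructed set consists of nilpotent triples of dimension at least $\dim(\mathrm{Reg}_{3,L'})$. The only cosmetic point is that in the $E_7$ argument the set $C$ is not a priori a component but an irreducible closed subset of the right dimension; since it consists of nilpotent triples its closure cannot equal $\mathrm{Reg}_{3,L'}$, so any component containing it satisfies the hypothesis of Corollary 3.3, exactly as you argue.
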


\textbf{Appendix 1: GAP code for type $E$}\\

\noindent L:= SimpleLieAlgebra("E",7,Rationals);\\
N := NilpotentOrbit(L,[0,2,0,0,0,0,0]);\\
x := SL2Triple(N1)[3];\\
S:= Subalgebra(L,[x]);\\
C:= Centralizer(L,S);\\
h := SL2Triple(N1)[2];\\
G := SL2Grading(L,h);\\
L2 := G[1][2];\\
VL2 := VectorSpace(Rationals,L2);\\
L2new := Intersection(C,VL2);\\
B2 := Basis(L2new);\\
L4 := G[1][4];\\
VL4 := VectorSpace(Rationals,L4);\\
L4new := Intersection(VL4,C);\\
B4 := Basis(L4new);\\

\textbf{Appendix 2: Signs of the constants $R_{\phi,\psi}$ for type $G_2$}\\

Recall that our notation is $[x_{\phi},x_{\psi}] = R_{\phi,\psi} x_{\phi+\psi}$ if $\phi,\psi,\phi+\psi \in \Phi$. The choice of signs of $R_{\phi,\psi}$ is not unique. We here provide a choice we use. If $\psi = -\phi$, then $[x_{\phi},x_{\psi}] = h_{\phi}$ and in those cases we say $R_{\phi,\psi}$ is undefined and in those places we write $\#$. Also, if $\phi+\psi \notin \Phi$, then $[x_{\phi},x_{\psi}]=0$, and in those cases we denote $R_{\phi,\psi}=0$. In the following table $R_{\phi,\psi}$ is the constant written in the row corresponding to $\phi$ and the column corresponding to $\psi$.\\

\scalebox{0.8}{\begin{tabular}{|c|c|c|c|c|c|c|c|c|c|c|c|c|}
\hline
$R_{\phi,\psi}$ & $\alpha$ & $-\alpha$ & $\beta$ & $-\beta$ & $\alpha+\beta$ & $-\alpha-\beta$ & $2\alpha+\beta$ & $-2\alpha-\beta$ & $3\alpha+\beta$ & $-3\alpha-\beta$ & $3\alpha+2\beta$ & $-3\alpha-2\beta$\\
\hline
$\alpha$ & 0 & \# & -1 & 0 & -2 & 3 & -3 & 2 & 0 & 1 & 0 & 0\\
\hline
$-\alpha$ & $\#$ & 0 & 0 & 1 & -3 & 2 & -2 & 3 & -1 & 0 & 0 & 0\\
\hline
$\beta$ & 1 & 0 & 0 & $\#$ & 0 & -1 & 0 & 0 & -1 & 0 & 0 & 1\\
\hline
$-\beta$ & 0 & -1 & $\#$ & 0 & 1 & 0 & 0 & 0 & 0 & 1 & -1 & 0\\
\hline
$\alpha+\beta$ & 2 & 3 & 0 & -1 & 0 & $\#$ & 3 & -2 & 0 & 0 & 0 & -1\\
\hline
$-\alpha-\beta$ & -3 & -2 & 1 & 0 & $\#$ & 0 & 2 & -3 & 0 & 0 & 1 & 0\\
\hline
$2\alpha+\beta$ & 3 & 2 & 0 & 0 & -3 & -2 & 0 & $\#$ & 0 & -1 & 0 & 1\\
\hline
$-2\alpha-\beta$ & -2 & -3 & 0 & 0 & 2 & 3 & $\#$ & 0 & 1 & 0 & -1 & 0\\
\hline
$3\alpha+\beta$ & 0 & 1 & 1 & 0 & 0 & 0 & 0 & -1 & 0 & $\#$ & 0 & -1\\
\hline
$-3\alpha-\beta$ & -1 & 0 & 0 & -1 & 0 & 0 & 1 & 0 & $\#$ & 0 & 1 & 0\\
\hline
$3\alpha+2\beta$ & 0 & 0 & 0 & 1 & 0 & -1 & 0 & 1 & 0 & -1 & 0 & $\#$\\
\hline
$-3\alpha-2\beta$ & 0 & 0 & -1 & 0 & 1 & 0 & -1 & 0 & 1 & 0 & $\#$ & 0\\
\hline
\end{tabular}}\\
\\
\\

\newpage

\textbf{Appendix 3: Elements of bases of $L_{2} \bigcap C(x)$ and $L_4 \cap C(x)$ from Section 4.5}\\

Elements of a basis of $L_2 \cap C(x)$ are:\\

\begin{enumerate} 
\item $x_{\alpha_2}$
\item $x_{\alpha_2+\alpha_4}$
\item $x_{\alpha_2+\alpha_3+\alpha_4}+x_{\alpha_2+\alpha_4+\alpha_5+\alpha_6}$
\item $x_{\alpha_2+\alpha_4+\alpha_5}$
\item  $x_{\alpha_1+\alpha_2+\alpha_3+\alpha_4}+x_{\alpha_2+\alpha_3+2\alpha_4+\alpha_5}$
\item $x_{\alpha_2+\alpha_3+\alpha_4+\alpha_5}$
\item $x_{\alpha_2+\alpha_4+\alpha_5+\alpha_6}+x_{\alpha_1+\alpha_2+\alpha_3+\alpha_4+\alpha_5}$
\item $x_{\alpha_2+\alpha_3+2\alpha_4+\alpha_5}+x_{\alpha_2+\alpha_4+\alpha_5+\alpha_6+\alpha_7}$
\item $x_{\alpha_2+\alpha_3+\alpha_4+\alpha_5+\alpha_6}$
\item $x_{\alpha_1+\alpha_2+\alpha_3+2\alpha_4+\alpha_5}$
\item $x_{\alpha_1+\alpha_2+\alpha_3+\alpha_4+\alpha_5+\alpha_6} - x_{\alpha_2+\alpha_3+\alpha_4+\alpha_5+\alpha_6+\alpha_7}$
\item $x_{\alpha_2+\alpha_3+2\alpha_4+\alpha_5+\alpha_6}+x_{\alpha_1+\alpha_2+\alpha_3+\alpha_4+\alpha_5+\alpha_6+\alpha_7}$
\item $x_{\alpha_2+\alpha_3+\alpha_4+\alpha_5+\alpha_6+\alpha_7}-x_{\alpha_2+\alpha_3+2\alpha_4+2\alpha_5+\alpha_6}$
\item $x_{\alpha_1+\alpha_2+2\alpha_3+2\alpha_4+\alpha_5}$
\item $x_{\alpha_1+\alpha_2+\alpha_3+2\alpha_4+\alpha_5+\alpha_6}-x_{\alpha_2+\alpha_3+2\alpha_4+\alpha_5+\alpha_6+\alpha_7}$
\item $x_{\alpha_2+\alpha_3+2\alpha_4+\alpha_5+\alpha_6+\alpha_7}-x_{\alpha_1+\alpha_2+\alpha_3+2\alpha_4+2\alpha_5+\alpha_6+\alpha_7}$
\item $x_{\alpha_1+\alpha_2+2\alpha_3+2\alpha_4+\alpha_5+\alpha_6}-x_{\alpha_2+\alpha_3+2\alpha_4+2\alpha_5+2\alpha_6+\alpha_7}$
\item $x_{\alpha_1+\alpha_2+\alpha_3+2\alpha_4+2\alpha_5+\alpha_6}$
\item $x_{\alpha_1+\alpha_2+\alpha_3+2\alpha_4+\alpha_5+\alpha_6+\alpha_7}$
\item $x_{\alpha_2+\alpha_3+2\alpha_4+2\alpha_5+\alpha_6+\alpha_7}$
\item $x_{\alpha_1+\alpha_2+2\alpha_3+2\alpha_4+2\alpha_5+\alpha_6}$
\item $x_{\alpha_1+\alpha_2+2\alpha_3+2\alpha_4+\alpha_5+\alpha_6+\alpha_7}+x_{\alpha_1+\alpha_2+2\alpha_3+3\alpha_4+2\alpha_5+\alpha_6}$
\item $x_{\alpha_2+\alpha_3+2\alpha_4+2\alpha_5+2\alpha_6+\alpha_7}-x_{\alpha_1+\alpha_2+2\alpha_3+2\alpha_4+2\alpha_5+\alpha_6+\alpha_7}$
\item $x_{\alpha_1+\alpha_2+2\alpha_3+3\alpha_4+2\alpha_5+\alpha_6}-x_{\alpha_1+\alpha_2+\alpha_3+2\alpha_4+2\alpha_5+2\alpha_6+\alpha_7}$
\item $x_{\alpha_1+\alpha_2+2\alpha_3+3\alpha_4+2\alpha_5+\alpha_6+\alpha_7}$
\item $x_{\alpha_1+\alpha_2+2\alpha_3+2\alpha_4+2\alpha_5+2\alpha_6+\alpha_7}$
\item $x_{\alpha_1+\alpha_2+2\alpha_3+3\alpha_4+2\alpha_5+2\alpha_6+\alpha_7}$
\item $x_{\alpha_1+\alpha_2+2\alpha_3+3\alpha_4+3\alpha_5+2\alpha_6+\alpha_7}$
\end{enumerate}

Elements of a basis of $L_{4} \bigcap C(x)$ are:

\begin{enumerate}
\item $x_{\alpha_1+2\alpha_2+2\alpha_3+3\alpha_4+2\alpha_5+\alpha_6}$
\item $x_{\alpha_1+2\alpha_2+2\alpha_3+3\alpha_4+2\alpha_5+\alpha_6+\alpha_7}$
\item $x_{\alpha_1+2\alpha_2+2\alpha_3+3\alpha_4+2\alpha_5+2\alpha_6+\alpha_7}$
\item $x_{\alpha_1+2\alpha_2+2\alpha_3+3\alpha_4+3\alpha_5+2\alpha_6+\alpha_7}$
\item $x_{\alpha_1+2\alpha_2+2\alpha_3+4\alpha_4+3\alpha_5+2\alpha_6+\alpha_7}$
\item $x_{\alpha_1+2\alpha_2+3\alpha_3+4\alpha_4+3\alpha_5+2\alpha_6+\alpha_7}$
\item $x_{2\alpha_1+2\alpha_2+3\alpha_3+4\alpha_4+3\alpha_5+2\alpha_6+\alpha_7}$
\end{enumerate}

\end{document}